\newtheorem{theorem}{Theorem}[section]
\newtheorem{corollary}[theorem]{Corollary}
\newtheorem{lemma}[theorem]{Lemma}
\newtheorem{definition}[theorem]{Definition}
\newtheorem{proposition}[theorem]{Proposition}
\newcommand{\R}{\mathbb{R}}
\newcommand{\eps}{\varepsilon}
\newcommand{\abs}[1]{\mid\!#1\!\mid}
\newcommand{\norm}[2]{\left\lVert#1\right\rVert_{#2}}
\newcommand{\eqdef}{:=}
\newcommand{\Obsolete}[1]{
    }
\DeclareMathOperator\supp{supp}
\DeclareFontFamily{U}{mathx}{}
\DeclareFontShape{U}{mathx}{m}{n}{<-> mathx10}{}
\DeclareSymbolFont{mathx}{U}{mathx}{m}{n}
\DeclareMathAccent{\widehat}{0}{mathx}{"70}
\DeclareMathAccent{\widecheck}{0}{mathx}{"71}
\date{\today}
\author{Nicholas Harrison}
\address{Oregon State University, Department of Mathematics}
\email{harrnich@oregonstate.edu}
\author{Zachary Radke}
\address{Oregon State University, Department of Mathematics}
\email{radkeza@oregonstate.edu}
\title[Euler Equations in Function Spaces of Generalized Smoothness]{Well-Posedness for the Euler Equations in Function Spaces of Generalized Smoothness}
\begin{document}

\maketitle

\begin{abstract}
{We consider the question of well-posedness for the incompressible Euler equations in generalized function spaces of the type $B^{s,\psi}_{p,q}(\R^d)$ and $F^{s,\psi}_{p,q}(\R^d)$ where $\psi$ is a slowly varying function in the Karamata sense and $s=d/p+1$. We prove that if $\psi$ grows fast enough, then there is a local in time solution to the Euler equations. We also establish a BKM-type criterion that allows us to obtain global existence in two dimensions.}
\end{abstract}

\section{Introduction}\label{Section Intro}
\subsection{Background}
We consider the persistence of regularity of solutions to the incompressible Euler equations, given by
\begin{equation}
\begin{cases}\label{euler velocity}
\tag{$E$}
\partial_tu + (u \cdot \nabla)u=-\nabla p &\text{ in } (0,T) \times \R^d,\\
\text{div }u=0 &\text{ in } [0,T) \times \R^d,\\
u \vert_{t=0}=u_0 &\text{ in } \R^d,
\end{cases}
\end{equation}
where $u:[0,T)\times\R^d\to \R^d$ represents the velocity field of an ideal fluid and $p:[0,T) \times \R^d \to \R$ denotes the scalar pressure. Using the incompressibility condition of the velocity field, we can determine the pressure from the velocity as $p=(-\Delta)^{-1}\text{div} (u \cdot \nabla u)$. We also define the vorticity of the fluid as a matrix with $ij$-entry $\omega_{ij}=\partial_ju_i-\partial_iu_j$.  The velocity can be recovered from the vorticity using the relation $u_j=\partial_i(-\Delta)^{-1}\omega_{ij}$ where we sum over repeated indices. We often abuse notation and write this relation as $u=\text{div}((-\Delta)^{-1}\omega)$. This relation is known as the Biot-Savart law. We then have the vorticity equation
\begin{equation}
\begin{cases}\label{euler vorticity}
\tag{$V$}
\partial_t\omega+u\cdot\nabla\omega=\omega \cdot \nabla u &\text{ in } (0,T) \times \R^d,\\
u=\text{div}((-\Delta)^{-1}\omega) &\text{ in } [0,T) \times \R^d,\\
\omega \vert_{t=0} = \omega_0 &\text{ in } \R^d.
\end{cases}
\end{equation} 
In two dimensions, we see the vorticity depends only on $\omega_{21}=\partial_1u_2-\partial_2u_1$. This coincides with the notion of treating the two-dimensional velocity field as three-dimensional, with the $z$-component being zero. Then, taking the curl shows that the vorticity only has a $z$-component given by $\omega_{21}$. In turn we see the vorticity must be orthogonal to the velocity gradient in two dimensions. This greatly simplifies the structure of \eqref{euler vorticity} to the transport equation
\begin{equation}
\begin{cases}\label{euler 2d vorticity}
\tag{$V_2$}
\partial_t\omega+u\cdot\nabla\omega=0 &\text{ in } (0,T) \times \R^2,\\
u=\nabla^{\perp}(-\Delta)^{-1}\omega &\text{ in } [0,T) \times \R^2,\\
\omega \vert_{t=0} = \omega_0 &\text{ in } \R^2.
\end{cases}
\end{equation} 
The solutions to \eqref{euler 2d vorticity} are given by $\omega(t,x)=\omega_0(t,X_t^{-1}(x))$ where $X_t^{-1}$ is the inverse of the flow map $X_t(x)$, which satisfies the ODE
\begin{equation}
\label{Flow Map ODE}
\begin{cases}
\frac{d}{dt}X_t(x)=u(t,X_t(x))\quad &\text{in } (0,T)\times\mathbb{R}^2,\\
X_t(x) \vert_{t=0} = x\quad &\text{in } \mathbb{R}^2.
\end{cases}
\end{equation}
When studying the well-posedness of \eqref{euler velocity}, a common strategy is to study weak solutions and then show that the weak solution is sufficiently smooth. 
\begin{definition}
\label{weak solution}A vector field $u: [0,T) \times \R^d \to \R^d$ is a weak solution to \eqref{euler velocity} with initial data $u_0:\R^d  \to \R^d$ if 
\begin{enumerate}
\item $u \in L^2_{loc}([0,T)\times \R^d)$,\\
\item 
$$
\int_0^T\int_{\R^d} \left\{ u\partial_t\phi + u_ku_l\partial_{x_l}\phi_k\right\}\,dx\,dt = \int_{\R^d}\left\{ u(T) \cdot \phi(T)-u(0)\cdot\phi(0)\right\}\,dx
$$
for all $\phi \in C_0^{\infty}([0,T);\mathbb{R}^d)$ with $\text{div }\phi=0$, where we sum over repeated indices, and
\item $\text{div }u=0$ in $\mathcal{D}'(\R^d)$.
\end{enumerate}
\end{definition}
\subsection{Prior Work}
The study of \eqref{euler velocity} in various function spaces has been extensively investigated. One of the first results was from \cite{L25}, where the author establishes local well-posedness in the spaces $C^{k,\gamma} \cap L^p(\R^2)$ where $k \geq 1, 0< \gamma <1$ and $p \in (1,\infty)$. Global well-posedness in these spaces was then established in \cite{WW33}. In the context of Sobolev spaces, local well-posedness in $W^{s,p}(\R^2)$ was established in \cite{KP86} with $s>\frac{2}{p}+1$ and $p \in (1,\infty)$. These solutions are then extended to global solutions using the BKM criterion \cite{BKM84}, which states that a solution on $[0,T]\times \mathbb{R}^d$ in $W^{s,p}(\R^d)$ can be extended if and only if $\norm{\omega}{L^1([0,T];L^{\infty})}<\infty$. We also note there are short-time analogues in $\R^3$ to the above results. 
\\
\\
The question about regularity in critical Sobolev spaces $W^{s,p}(\R^2)$ with $s=\frac{2}{p}+1$ and $p \in (1,\infty)$ remained unanswered for quite some time after. Here, criticality can be understood in the sense of being barely unable to close the estimate $\norm{\nabla u}{L^{\infty}} \leq C\norm{u}{W^{s,p}}$. In \cite{BL15}, the authors establish strong ill-posedness in these function spaces. The dichotomy between well-posedness and strong ill-posedness at the critical regime has inspired work in Besov and Triebel-Lizorkin spaces, as these spaces offer finer embeddings. In \cite{DC03} and \cite{DC04} the author establishes local well-posedness in the spaces $B^s_{p,q}(\R^d)$ and $F^{s}_{p,q}(\R^d)$ provided that $s>d/p+1$. Note that under these assumptions, the velocity field is
Lipschitz. In the critical case, local well-posedness is established in $B^{d/p+1}_{p,1}(\R^d)$ and $F^{d+1}_{1,q}(\R^d)$ and global well-posedness is shown when $d=2$ (see \cite{MV98} and \cite{DC03}). There have also been ill-posedness results in scaling critical Besov and Triebel-Lizorkin spaces, which were established in \cite{BL15}.
\\
\\
Another way to refine the scale of $H^s$ Sobolev spaces is to include an extra factor which imposes additional sub-power decay on the Fourier transform\footnote{Throughout this paper we use the convention 
$
    \hat{f}(\xi) := \int_{\mathbb{R}^d} f(x) e^{-2\pi i x \cdot \xi}\, dx $ to denote the Fourier transform of a function $f:\mathbb{R}^d\to\mathbb{R}$.}. In particular, we assume that $\psi$ is a slowly varying function in the Karamata sense (see definition \ref{slowly varying}) and define the space $H^{s,\psi}(\R^d)$ as the set of $f \in \mathcal{S}'(\R^d)$ such that the following norm is finite,
\begin{equation}
\label{Hormander spaces}
\norm{f}{H^{s,\psi}} = \left(\int_{\R^d} (1+|\xi|^2)^s\psi^2(|\xi|)|\hat{f}(\xi)|^2d\xi\right)^{\frac{1}{2}}.
\end{equation}
These spaces are known as Hörmander spaces. Using Proposition \ref{Karamata Representation} one can show that $\psi$ grows slower than any power function. It then follows that 
$$
\bigcup_{\eps>0} H^{s+\eps}(\R^d) \eqdef H^{s^+}(\R^d) \subset  H^{s,\psi}(\R^d) \subset H^s(\R^d),
$$
with all of the above containments being strict. When $\psi(t)=\log^{\alpha}(e+t)$, sharp estimates in related function spaces are established in \cite{BN19} and then applied to the study of continuity equations in \cite{BN21}. Later in \cite{CCS24} weak solutions to the two-dimensional Euler equations in $H^{0,\psi}(\mathbb{T}^2)$ are studied, and in \cite{MS22}, the existence of weak solutions to transport equations in $H^{0,\psi}(\mathbb{T}^2)$ is established provided that $u \in L^1((0,T):W^{1,p}(\mathbb{T}^2))$ and $\frac{1}{2}\leq \alpha \leq \frac{p}{2}$ for some $1<p<\infty$.
\\
\\
When studying the Besov and Triebel-Lizorkin variants of Hörmander spaces, we recall that the Littlewood-Paley operator roughly projects frequencies on the scale of $2^j$, so we replace the factor of $2^{js}$ by $2^{js}\psi(2^j)$. Again, in the case of the logarithm, we see that $\log^{\alpha}(e+2^j) \sim (1+j)^{\alpha}$. These spaces in which we replace $2^{js}$ by $2^{js}(1+j)^{\alpha}$ are notated as $B^{s,\alpha}_{p,q}(\R^d)$ and $F^{s,\alpha}_{p,q}(\R^d)$ and are studied extensively in \cite{DT18}. 
\\
\\
In this work, we consider the scaling critical ($s=d/p+1)$ Besov and Triebel-Lizorkin spaces indexed by a slowly varying function notated $B^{s,\psi}_{p,q}(\R^d)$ and $F^{s,\psi}_{p,q}(\R^d)$. We show that if $\psi$ grows sufficiently fast, then the Euler equations are locally well-posed in $\R^d$ for $d \geq 2$. We also prove a BKM-type inequality which allows us to continue our solution provided that $\norm{\omega}{L^1([0,T];\dot{B}^0_{\infty,1})}<\infty$. As an immediate corollary, we obtain global well-posedness when $d=2$. We remark that this result is sharp. In a forthcoming paper, we prove that \eqref{euler vorticity} is strongly ill-posed in $H^{1,\psi}(\R^2)$ where $\psi(t)=\log^{\alpha}(1+t)$ and $\alpha\in(0,\frac{1}{2}]$ which lies strictly above the ill-posedness result of \cite{BL15} and just below the spaces we consider in the well-posedness results below.
\subsection{Main Results}
We state our main results below. See Definitions \ref{Generalized function spaces} and \ref{good sv functions}  for the function spaces and admissible $\psi$ considered below.
\begin{theorem}[Local existence in $\R^d$ and blow up criterion]Let $d \geq 2$ and $X^{s,\psi}_{p,q}(\R^d)$ denote either $B^{s,\psi}_{p,q}(\R^d)$ or $F^{s,\psi}_{p,q}(\R^d)$. Suppose $s=d/p+1$, $p,q \in (1,\infty)$, and $\psi \in \mathcal{M}_{q'}$ if $X^{s,\psi}_{p,q}(\R^d) =B^{s,\psi}_{p,q}(\R^d)$ or $\psi \in \mathcal{M}_{p'}$ if $X^{s,\psi}_{p,q}(\R^d) =F^{s,\psi}_{p,q}(\R^d)$. Then, given $u_0 \in X^{s,\psi}_{p,q}(\R^d)$, there exists a $T>0$ and a unique solution $u$ to \eqref{euler velocity} such that 
$$ 
u \in C([0,T];X^{s,\psi}_{p,q}(\R^d)).
$$
Furthermore, the solution $u$ blows up at time $T^*>T$, namely 
$$
\limsup_{t \to T^*} \norm{u(t)}{X^{s,\psi}_{p,q}} =\infty,
$$
if and only if 
$$
\int_0^{T^*} \norm{\omega(t)}{\dot{B}^0_{\infty,1}}\,dt=\infty.
$$
\end{theorem}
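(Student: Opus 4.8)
The plan is to follow the classical Kato--Ponce / Chemin strategy adapted to the generalized-smoothness scale. First I would set up the a priori estimate: applying the Littlewood-Paley block $\Delta_j$ to the vorticity equation \eqref{euler vorticity}, commuting with the transport term via a commutator estimate, and using the product and commutator estimates for paraproducts (in the $B^{s,\psi}$ or $F^{s,\psi}$ framework, which is where the hypothesis $\psi\in\mathcal M_{q'}$ resp. $\mathcal M_{p'}$ enters to control the $\ell^{q}$/$L^p$ rearrangement of the $\psi(2^j)$ weights). The goal is to reach a differential inequality of the form
\begin{equation*}
\frac{d}{dt}\norm{u(t)}{X^{s,\psi}_{p,q}} \le C\bigl(\norm{\nabla u(t)}{L^\infty} + \norm{\omega(t)}{\dot B^0_{\infty,1}}\bigr)\,\norm{u(t)}{X^{s,\psi}_{p,q}} + \text{(lower order)},
\end{equation*}
where the delicate point is that the borderline logarithmic loss coming from the transport commutator at the critical index $s=d/p+1$ is exactly absorbed by the extra decay that $\psi$ provides — this is the mechanism that makes fast-growing $\psi$ work where plain $B^{d/p+1}_{p,q}$ fails. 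Because the velocity at this regularity is log-Lipschitz but not Lipschitz, I would replace $\norm{\nabla u}{L^\infty}$ by $\norm{\omega}{\dot B^0_{\infty,1}}$ using the Biot--Savart law together with the standard logarithmic interpolation $\norm{\nabla u}{L^\infty}\lesssim \norm{\omega}{\dot B^0_{\infty,1}}(1+\log(e+\norm{u}{X^{s,\psi}_{p,q}}))$, and then close via Gronwall.

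For the existence half, I would run the usual iteration scheme: mollify the data, solve the regularized system, obtain uniform-in-$n$ bounds on $[0,T]$ for $T$ depending only on $\norm{u_0}{X^{s,\psi}_{p,q}}$ from the a priori estimate above, prove the sequence is Cauchy in a lower-regularity space (say $X^{s-1,\psi}_{p,q}$ or $L^2$), and upgrade weak-$*$ convergence in the top space to strong continuity in time by a Bona--Smith / Fatou argument. Uniqueness follows from an energy estimate for the difference of two solutions in the lower-regularity space, again using the log-Lipschitz bound on $u$ and Osgood's lemma rather than plain Gronwall.

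The blow-up criterion is then essentially a corollary of the a priori estimate. If $\int_0^{T^*}\norm{\omega(t)}{\dot B^0_{\infty,1}}\,dt<\infty$, then the Gronwall/Osgood argument applied to the differential inequality shows $\norm{u(t)}{X^{s,\psi}_{p,q}}$ stays bounded on $[0,T^*)$, so by the local existence statement (whose time of existence depends only on the norm of the data) the solution extends past $T^*$, contradicting maximality; hence no blow-up occurs. Conversely, if the solution genuinely blows up at $T^*$ then the integral must diverge, since finiteness would have prevented it. The one point requiring care is making the continuation argument precise: one must check that the solution map is defined on a time interval whose length is bounded below by a decreasing function of $\norm{u_0}{X^{s,\psi}_{p,q}}$ alone, so that a uniform bound on the norm up to $T^*$ really does buy extra time.

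The main obstacle, and the genuinely new part, is the critical commutator/product estimate in the $B^{s,\psi}_{p,q}$ and $F^{s,\psi}_{p,q}$ spaces: one must show that the $\psi$-weighted square-function (or $\ell^q$) sum controlling $u\cdot\nabla\omega$ and the pressure term is bounded by $\norm{\nabla u}{L^\infty}$ times the norm of $u$, with the logarithmic defect at $s=d/p+1$ precisely compensated by summability properties of $\psi$ — this is exactly where the classes $\mathcal M_{q'}$ and $\mathcal M_{p'}$ are designed to make a Hardy-type inequality on the sequence $(\psi(2^j))_j$ go through. I expect everything else (the iteration, the uniqueness via Osgood, the continuation argument) to be routine adaptations of the $B^{d/p+1}_{p,1}$ theory.
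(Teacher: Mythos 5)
Your proposal contains a genuine conceptual gap in how the hypothesis $\psi\in\mathcal M_{q'}$ (resp.\ $\mathcal M_{p'}$) is used. You write that ``the velocity at this regularity is log-Lipschitz but not Lipschitz'' and that the role of $\psi$ is to ``absorb'' a ``borderline logarithmic loss coming from the transport commutator.'' That is not the mechanism here, and if you tried to implement your plan you would find there is no logarithmic defect to absorb. The point of the condition $\int_1^\infty t^{-1}\psi^{q'}(t)^{-1}\,dt<\infty$ is precisely that it buys the embedding $B^{s,\psi}_{p,q}(\R^d)\hookrightarrow B^{s-d/p}_{\infty,1}(\R^d)=B^1_{\infty,1}(\R^d)$ (Proposition \ref{embedding theorem}, with the analogous Proposition \ref{key triebel lizorkin embedding} for the Triebel--Lizorkin case). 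In other words, under these hypotheses the velocity field \emph{is} Lipschitz, $\norm{\nabla u}{L^\infty}\lesssim\norm{u}{X^{s,\psi}_{p,q}}$, and moreover $\norm{\nabla u}{L^\infty}\lesssim\norm{\omega_0}{L^p}+\norm{\omega}{\dot B^0_{\infty,1}}$ with no logarithmic factor (since $\omega\in\dot B^0_{\infty,1}$ is strictly stronger than $\omega\in L^\infty$). The commutator and product estimates in $B^{s,\psi}_{p,q}$ and $F^{s,\psi}_{p,q}$ (Propositions \ref{besov commutator estimate}, \ref{Triebel Lizorkin Commutator Estimate}, Lemmas \ref{Leibniz Rule in Besov Spaces}, \ref{Leibniz rule TL}) hold for any non-decreasing slowly varying $\psi$ and produce no logarithmic loss: the $\psi(2^j)$ weight simply passes through using slow variation and monotonicity, exactly as $2^{js}$ does in the unweighted case. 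The constraint $\psi\in\mathcal M_{q'}$ plays no role in those estimates.

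Consequently your proposed a priori inequality with a factor $(1+\log(e+\norm{u}{X^{s,\psi}_{p,q}}))$ and the subsequent Osgood argument are unnecessary and, as written, misdescribe the situation. The paper's a priori estimate is the clean Gronwall-type bound
\begin{equation*}
\norm{u(t)}{X^{s,\psi}_{p,q}}\le\norm{u_0}{X^{s,\psi}_{p,q}}\exp\Bigl(C\int_0^t\norm{\nabla u(\tau)}{L^\infty}\,d\tau\Bigr),
\end{equation*}
and the iteration, Cauchy estimate in $X^{s-1,\psi}_{p,q}$, and uniqueness all close with plain Gronwall because $u$ is Lipschitz. (One other technical difference: the paper applies $\Delta_j$ to the velocity equation and works in Lagrangian coordinates along the flow of $u$ rather than to the vorticity equation, which streamlines the commutator bookkeeping, but this is a stylistic choice.) The blow-up criterion is then immediate from the a priori estimate together with $\norm{\nabla u}{L^\infty}\lesssim\norm{\omega_0}{L^p}+\norm{\omega}{\dot B^0_{\infty,1}}$ for one direction and the embedding $X^{s,\psi}_{p,q}\hookrightarrow B^1_{\infty,1}$ for the converse; again no logarithmic interpolation is needed. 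Your continuation logic for the ``if'' direction is fine in spirit, but in this Lipschitz regime the whole argument is the standard $B^{d/p+1}_{p,1}$ theory carried over verbatim; the content of the $\psi$-hypothesis is entirely concentrated in the single embedding into $B^1_{\infty,1}$.
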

As a corollary, we obtain global existence in two dimensions.

\begin{corollary}[Global existence in $\R^2$] Let $X^{s,\psi}_{p,q}(\R^2)$ denote either $B^{s,\psi}_{p,q}(\R^2)$ or $F^{s,\psi}_{p,q}(\R^2)$. Suppose $s=\frac{2}{p}+1$, $p,q \in (1,\infty)$, and $\psi \in \mathcal{M}_{q'}$ if $X^{s,\psi}_{p,q}(\R^2) =B^{s,\psi}_{p,q}(\R^2)$ or $\psi \in \mathcal{M}_{p'}$ if $X^{s,\psi}_{p,q}(\R^2) =F^{s,\psi}_{p,q}(\R^2)$. Then, given $u_0 \in X^{s,\psi}_{p,q}(\R^2)$, there exists a unique solution $u$ to \eqref{euler velocity} such that 
$$ 
u \in C([0,\infty);X^{s,\psi}_{p,q}(\R^2)).
$$
\end{corollary}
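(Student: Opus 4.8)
The plan is to deduce the corollary from the blow-up criterion in the theorem, so the only task is to show that the quantity $\int_0^{T^*}\norm{\omega(t)}{\dot B^0_{\infty,1}}\,dt$ can never become infinite in finite time when $d=2$. First I would invoke the theorem to obtain, for $u_0\in X^{s,\psi}_{p,q}(\R^2)$, a maximal time of existence $T^*\in(0,\infty]$ and a solution $u\in C([0,T);X^{s,\psi}_{p,q})$ for every $T<T^*$; suppose toward a contradiction that $T^*<\infty$. By the continuous embedding $X^{s,\psi}_{p,q}(\R^2)\hookrightarrow X^s_{p,q}(\R^2)$ (the $\psi$ factor only adds decay) and the mapping properties of the Biot–Savart law in these scaling-critical spaces, the vorticity $\omega=\partial_1 u_2-\partial_2 u_1$ lies in the space one derivative below, so it suffices to control the two-dimensional vorticity transport.

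The key structural input is the two-dimensional vorticity equation \eqref{euler 2d vorticity}: $\omega$ is transported by the divergence-free velocity field along the flow map $X_t$ of \eqref{Flow Map ODE}, and hence $\omega(t,x)=\omega_0(X_t^{-1}(x))$. Consequently, \emph{every} rearrangement-invariant norm of $\omega(t,\cdot)$ is conserved: in particular $\norm{\omega(t)}{L^p(\R^2)}=\norm{\omega_0}{L^p(\R^2)}$ for all $p\in[1,\infty]$, and more relevantly for us $\norm{\omega(t)}{L^\infty}=\norm{\omega_0}{L^\infty}$ and $\norm{\omega(t)}{L^2}=\norm{\omega_0}{L^2}$. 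To see $\omega_0\in L^2\cap L^\infty$ from $u_0\in X^{s,\psi}_{p,q}$ with $s=2/p+1$, I would use the embedding of the critical space into, say, $\dot B^{2/p+1}_{p,q}\cap \dot B^{1}_{\infty,q}$-type spaces, so that $\omega_0\in \dot B^{0}_{\infty,q}\cap \dot B^{0}_{2,2}=L^\infty\cap L^2$ after standard Besov–Lebesgue embeddings (when $p\le 2$; for $p>2$ one interpolates the available $\dot B^{2/p}_{p,q}$ bound on $\omega_0$ with its $L^\infty$ bound). The point is only that finitely many Lebesgue norms of $\omega_0$ are finite and therefore remain finite, uniformly in $t<T^*$.

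Next I would bound $\norm{\omega(t)}{\dot B^0_{\infty,1}}$ by these conserved quantities. Writing $\omega=\sum_j \Delta_j\omega$, one controls the low frequencies $\sum_{j<0}\norm{\Delta_j\omega}{L^\infty}\lesssim \sum_{j<0}2^{j}\norm{\omega}{L^2}\lesssim\norm{\omega_0}{L^2}$ by Bernstein, and the block at $j\ge 0$ crudely by $\norm{\omega}{L^\infty}=\norm{\omega_0}{L^\infty}$; but the sum over $j\ge0$ diverges, so one needs a genuine logarithmic gain. The standard device is the logarithmic interpolation inequality
\begin{equation}
\norm{\omega}{\dot B^0_{\infty,1}}\lesssim 1+\norm{\omega}{L^\infty}\log\!\big(e+\norm{\omega}{X^{s-1,\psi}_{p,q}}\big),
\end{equation}
valid because $X^{s,\psi}_{p,q}$ sits just above the Lipschitz threshold, together with a Gronwall-type bound on the growth of $\norm{u(t)}{X^{s,\psi}_{p,q}}$ coming from the a priori estimate used to prove local existence, which typically reads $\tfrac{d}{dt}\norm{u}{X^{s,\psi}_{p,q}}\lesssim \norm{\nabla u}{L^\infty}\norm{u}{X^{s,\psi}_{p,q}}$ and hence $\norm{u(t)}{X^{s,\psi}_{p,q}}\le \norm{u_0}{X^{s,\psi}_{p,q}}\exp\!\big(C\int_0^t\norm{\omega(\tau)}{\dot B^0_{\infty,1}}\,d\tau\big)$ since $\norm{\nabla u}{L^\infty}\lesssim\norm{\omega}{\dot B^0_{\infty,1}}$. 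Substituting this into the logarithmic inequality and setting $\Phi(t):=\int_0^t\norm{\omega(\tau)}{\dot B^0_{\infty,1}}\,d\tau$ yields a closed differential inequality of the form $\Phi'(t)\lesssim 1+\norm{\omega_0}{L^\infty}\big(1+\Phi(t)\big)$, whose solution grows at most like a double exponential in $t$ and in particular stays finite on $[0,T^*]$. This contradicts the blow-up criterion, forcing $T^*=\infty$; uniqueness and continuity on $[0,\infty)$ then follow from the local statement applied on successive intervals.

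The main obstacle I anticipate is not the transport-conservation argument, which is classical, but verifying the logarithmic interpolation inequality with the correct $\psi$-dependence — i.e., checking that the $\psi$-enhanced critical space $X^{s,\psi}_{p,q}$ still obeys a logarithmic (rather than merely a bounded) bound on $\norm{\omega}{\dot B^0_{\infty,1}}$, and that the admissibility hypothesis $\psi\in\mathcal M_{q'}$ or $\mathcal M_{p'}$ is exactly what makes the Gronwall argument close without losing the $\psi$ factor. Everything else — the embeddings into Lebesgue spaces, conservation of $\norm{\omega(t)}{L^2\cap L^\infty}$, and the final double-exponential bound — is routine once those two ingredients are in place, and indeed the estimate $\tfrac{d}{dt}\norm{u}{X^{s,\psi}_{p,q}}\lesssim\norm{\nabla u}{L^\infty}\norm{u}{X^{s,\psi}_{p,q}}$ is precisely the a priori bound underlying the proof of the theorem, so it may be quoted directly.
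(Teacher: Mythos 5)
The strategy you outline is superficially similar to the paper's (invoke the blow-up criterion, then control $\int_0^{T^*}\|\omega(t)\|_{\dot B^0_{\infty,1}}\,dt$ by exploiting the 2D transport structure), but the key analytic tool you propose is different and, in this setting, it fails. You want to close the Gr\"onwall loop using a Brezis--Gallouet type logarithmic interpolation of the form
\[
\|\omega\|_{\dot B^0_{\infty,1}}\lesssim 1+\|\omega\|_{L^\infty}\log\bigl(e+\|\omega\|_{X^{s-1,\psi}_{p,q}}\bigr).
\]
This inequality is \emph{not} true in the $\psi$-indexed critical spaces under consideration. The standard Brezis--Gallouet device needs a \emph{polynomial} gap above the $\dot B^0_{\infty,1}\hookrightarrow L^\infty$ threshold: one truncates the Littlewood--Paley sum at level $N$, pays $N\|\omega\|_{L^\infty}$ for the middle band, and the tail $\sum_{j>N}$ is bounded by $2^{-\eps N}\|\omega\|_{H^{s'}}$ for some $\eps>0$, so $N\sim\eps^{-1}\log\|\omega\|_{H^{s'}}$ suffices. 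Here, however, the excess regularity is only the slowly varying weight $\psi$. Carrying out the same truncation with $\omega\in B^{d/p,\psi}_{p,q}$ yields a tail bounded by $\bigl(\sum_{j>N}\psi(2^j)^{-q'}\bigr)^{1/q'}\|\omega\|_{B^{d/p,\psi}_{p,q}}$ via H\"older, and for (say) $\psi(t)=\log^\alpha(e+t)$ this tail factor is $\sim N^{1/q'-\alpha}$. Equating it to $O(1)$ forces $N\gtrsim\|\omega\|_{B^{d/p,\psi}_{p,q}}^{1/(\alpha-1/q')}$, a power of the norm, not a logarithm; optimizing over $N$ instead gives a bound that is a nontrivial power of $\|\omega\|_{B^{d/p,\psi}_{p,q}}$. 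Either way, substituting the a priori bound $\|u(t)\|_{X^{s,\psi}_{p,q}}\le\|u_0\|e^{C\Phi(t)}$ into this reproduces a superlinear right-hand side for $\Phi'(t)$ and the ODE blows up in finite time, so the argument does not close.

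The paper avoids this obstacle by a fundamentally different tool: Vishik's composition estimate (Theorem 4.2 of \cite{MV98}), which states that for a bi-Lipschitz volume-preserving map $\Phi$,
\[
\|f\circ\Phi\|_{B^0_{\infty,1}}\lesssim\bigl(1+\log(\|\nabla\Phi\|_{L^\infty}\|\nabla\Phi^{-1}\|_{L^\infty})\bigr)\|f\|_{B^0_{\infty,1}}.
\]
This is an \emph{intrinsic} estimate on the space $B^0_{\infty,1}$ under flow-map composition, and it produces the needed logarithm directly from the transport representation $\omega(t)=\omega_0\circ X_t^{-1}$, with no appeal to excess regularity of $\omega_0$. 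Combined with $\|\nabla X_t^{\pm1}\|_{L^\infty}\le\exp\bigl(\int_0^t\|\nabla u\|_{L^\infty}\bigr)$ and $\|\nabla u\|_{L^\infty}\lesssim\|\omega_0\|_{L^p}+\|\omega\|_{B^0_{\infty,1}}$, one gets the linear integral inequality $\|\omega(t)\|_{B^0_{\infty,1}}\lesssim\|\omega_0\|_{B^0_{\infty,1}}\bigl(1+\|\omega_0\|_{L^p}t+\int_0^t\|\omega(\tau)\|_{B^0_{\infty,1}}\,d\tau\bigr)$, and Gr\"onwall yields an a priori single-exponential-in-$t$ bound on $\|\omega(t)\|_{B^0_{\infty,1}}$, which is finite on any bounded time interval and settles the blow-up criterion. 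In short: your high-level plan is right, but the logarithmic interpolation step that you flag yourself as ``the main obstacle'' is indeed the fatal one, and the fix is to replace it with Vishik's composition estimate rather than to try to prove a $\psi$-weighted Brezis--Gallouet inequality.
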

\subsection{Notation and Organization of Paper}
We next define some of the notation that we will use throughout this paper. Consider two quantities $A$ and $B$ parameterized by some index set $\Lambda$, we then say that $A \lesssim B$ if there exists some $C>0$ such that $A(\lambda) \leq CB(\lambda)$ for all $\lambda \in \Lambda$. If we wish to highlight the dependence of $C$ on a parameter, say $b$, we write $A \lesssim_b B$.  We write $A \sim B$ if $A \lesssim B$ and $B \lesssim A$. Furthermore, we let $C>0$ denote a positive constant that may change from line to line. If we wish to highlight the dependence of the constant $C$ on a parameter $b$, we write $C=C(b)$.
\\
\\
We organize the paper as follows. In the Section \ref{Section Prelim} we cover preliminaries such as Littlewood-Paley theory, function spaces, and slowly varying functions. In Section \ref{Section Besov} we establish an a priori estimate of smooth solutions to \eqref{euler velocity} in the spaces $B^{s,\psi}_{p,q}(\R^d)$. Using this a priori estimate, we construct a sequence of approximating solutions and then show that the limit of this sequence is a classical solution to \eqref{euler velocity}. Following, we prove a blow up criterion which allows us to obtain global well-posedness in two dimensions. In Section \ref{Section Triebel} we repeat the same argument but in the spaces $F^{s,\psi}_{p,q}(\R^d)$, which introduces separate difficulties. In the appendix, we prove many calculus inequalities. While these are known in the classical Besov and Triebel-Lizorkin spaces, we are not able to conclude without proof that these same estimates hold in these function spaces of generalized smoothness. The proofs follow quite similarly to those in the literature, but we include them for completeness.
\section{Preliminaries}\label{Section Prelim}

\subsection{Littlewood-Paley Theory} 
Before defining our function spaces, we begin with a brief overview of the Littlewood-Paley operators. We closely follow \cite{CH95}.\\
\\
It is well known that if $\mathcal{C}$ is the annulus of center $0$, inner radius $3/5$, and outer radius $5/3$, then there exists two positive radial functions $\hat{\chi} \in C_0^{\infty}(B_{5/6}(0))$ and $\hat{\varphi}\in C_0^{\infty}(\mathcal{C})$ such that, if $j \in \mathbb{Z}$, then we set $\varphi_j(x)=2^{jd}\varphi(2^jx)$ and
$$
\hat{\chi}+\sum_{j\geq 0} \hat{\varphi}_j = \hat{\chi} + \sum_{j \geq 0} \hat{\varphi}(2^{-j}\cdot)
 \equiv 1.$$
For $n\geq 0$ we define $\chi_n$ via its Fourier transform, $\hat{\chi}_n$, as
$$
\hat{\chi}_n(\xi) = \hat{\chi}(\xi) + \sum_{j \leq n} \hat{\varphi}_j(\xi) \text{ for } \xi \in \R^d.
$$
For a tempered distribution $f \in \mathcal{S}'(\R^d)$, we define the operator $S_n$ as 
$$
S_n f =\chi_n \ast f.
$$
Formally, $S_n$ can be thought of as a projection in Fourier space into the ball of radius of order $2^n$. For $f \in \mathcal{S}'(\R^d)$ we define the inhomogeneous Littlewood-Paley operators $\Delta_j$ by 
\begin{equation*}
\label{LP operator}
\Delta_j f
=
\begin{cases}
    0, \hspace{1cm} &\text{if }j < -1,\\
    \chi \ast f, &\text{if } j=-1, \\
    \varphi_j \ast f, &\text{if } j\geq 0,
\end{cases}
\end{equation*}
and for all $j\in \mathbb{Z}$ we define the homogeneous Littlewood-Paley operators $\dot{\Delta}_j$ by
$$
\dot{\Delta}_j f = \varphi_j \ast f.
$$
We also define the fattened Littlewood-Paley operator $\tilde{\Delta}_j$ as $\tilde{\Delta}_j=\Delta_{j-1}+\Delta_j+\Delta_{j+1}$. One easily verifies that $\Delta_j=\tilde\Delta_{j}\Delta_j$. Up to constants, one can formally think of $\dot{\Delta}_j$ as a projection in Fourier space into the annulus of inner and outer radius of order $2^j$. For any $u \in \mathcal{S}'(\R^d)$, we have that $u = \lim_{n \to \infty} S_nu$ where the limit holds in $\mathcal{S}'(\R^d)$. Or equivalently
$$
u = \sum_{j\geq -1} \Delta_j u \text{ in } \mathcal{S}'(\R^d).
$$
One benefit of using Littlewood-Paley operators can be seen in Bernstein's Lemma. Heuristically, Bernstein's Lemma states that if the support of $\hat{u}$ is included in some ball $B_{\lambda}(0)$, then a derivative of $u$ costs no more than a power of $\lambda$ when taking an $L^p$ norm. This will be made precise below. We let $\mathcal{C}_{a,b}(0)$ denote the annulus with inner radius $a$ and outer radius $b$.

\begin{lemma}(Bernstein's Lemma) Let $(r_1,r_2)$ be a pair of strictly positive real numbers such that $r_1<r_2$ and let $\lambda$ denote a positive real number. Then there exists a constant $C>0$, such that, for every integer $k$, every pair of real numbers $(a,b)$ satisfying $b \geq a \geq 1$, and every function $u \in L^a(\R^d)$,
\begin{enumerate}

\item 
\label{Bernstein in Ball}
If $\supp{\hat{u}} \subset B_{r_1\lambda}(0)$ then
\begin{equation*}
 \sup_{\abs{\alpha} = k} \norm{\partial^{\alpha}u}{L^b} \leq C^k\lambda^{k+d(\frac{1}{a}-\frac{1}{b})}\norm{u}{L^a}.
\end{equation*}\\
\item 
\label{Bernstein in Annulus}
Furthermore, if $\supp{\hat{u}} \subset \mathcal{C}_{r_1\lambda, r_2\lambda}(0)$ then
\begin{equation*}
 C^{-k}\lambda^k\norm{u}{L^a} \leq  \sup_{\abs{\alpha} = k} \norm{\partial^{\alpha}u}{L^a} \leq C^k\lambda^k\norm{u}{L^a}.
\end{equation*}
\end{enumerate}
\end{lemma}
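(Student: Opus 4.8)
The plan is to prove Bernstein's Lemma via the standard scaling-plus-convolution argument, using the Littlewood-Paley-type multipliers as convolution kernels. The key observation is that restricting the Fourier support lets us write $u$ (or its derivatives) as a convolution with a fixed Schwartz function rescaled to the appropriate frequency scale $\lambda$, after which Young's inequality closes everything.

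\medskip

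\noindent\emph{Part (1).} First I would fix a function $\theta \in C_0^\infty(\R^d)$ that is radial and identically $1$ on $B_{r_1}(0)$ and supported in, say, $B_{2r_1}(0)$. If $\supp \hat u \subset B_{r_1\lambda}(0)$, then setting $\theta_\lambda(\xi) = \theta(\xi/\lambda)$ we have $\hat u = \theta_\lambda \hat u$, hence $\partial^\alpha u = (\partial^\alpha u) \ast g_\lambda$ where $\widehat{g_\lambda}(\xi) = \theta(\xi/\lambda)$, i.e. $g_\lambda(x) = \lambda^d g(\lambda x)$ with $g = \check\theta \in \mathcal{S}(\R^d)$. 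More efficiently, write directly $\partial^\alpha u = u \ast h_\lambda^\alpha$ where $\widehat{h_\lambda^\alpha}(\xi) = (2\pi i \xi)^\alpha \theta(\xi/\lambda)$; a change of variables gives $h_\lambda^\alpha(x) = \lambda^{k+d} h^\alpha(\lambda x)$ with $h^\alpha = \big((2\pi i \cdot)^\alpha \theta\big)^\vee \in \mathcal{S}$ and $k = |\alpha|$. Then Young's inequality with $1 + 1/b = 1/a + 1/r$ (so $1/r = 1 - 1/a + 1/b \in [1/b,1]$, valid since $b \ge a$) yields
\[
\norm{\partial^\alpha u}{L^b} \le \norm{u}{L^a}\norm{h_\lambda^\alpha}{L^r} = \lambda^{k + d(1 - 1/r)}\norm{h^\alpha}{L^r}\norm{u}{L^a} = \lambda^{k + d(1/a - 1/b)}\norm{h^\alpha}{L^r}\norm{u}{L^a}.
\]
To get the uniform constant $C^k$ (rather than a constant depending badly on $\alpha$), one notes $\norm{h^\alpha}{L^r} \le \norm{h^\alpha}{L^1}^{?}\cdots$ — more cleanly, bound $\norm{h^\alpha}{L^r} \lesssim \norm{h^\alpha}{L^1} + \norm{h^\alpha}{L^\infty}$ and estimate derivatives of the fixed bump $\theta$; the combinatorial factors from differentiating $(2\pi i\xi)^\alpha\theta(\xi)$ at most $|\alpha|$ times are absorbed into $C^k$. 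Taking the supremum over $|\alpha| = k$ finishes Part (1).

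\medskip

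\noindent\emph{Part (2).} The upper bound is identical to Part (1) with $a = b$ (so $r = 1$, giving exactly $\lambda^k$). For the lower bound, fix $\eta \in C_0^\infty(\mathcal{C}_{r_1/2, 2r_2}(0))$ with $\eta \equiv 1$ on $\mathcal{C}_{r_1, r_2}(0)$. On the annulus the symbol $|\xi|^{-2k}\overline{(2\pi i\xi)^\alpha}$ summed appropriately inverts the Laplacian-type operator: choose a multiplier $m^\alpha(\xi)$, smooth and compactly supported in $\mathcal{C}_{r_1/2,2r_2}$, such that $\sum_{|\alpha|=k} m^\alpha(\xi)(2\pi i\xi)^\alpha = \eta(\xi)$ on all of $\R^d$ — e.g. $m^\alpha(\xi) = \binom{k}{\alpha}(2\pi i)^{-k}\overline{\xi^\alpha}|\xi|^{-2k}\eta(\xi)$ using the multinomial identity $\sum_{|\alpha|=k}\binom{k}{\alpha}|\xi^\alpha|^2 = |\xi|^{2k}$, which is legitimate since $|\xi| \gtrsim \lambda$ on the support. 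Rescaling, $u = \hat\eta(\cdot/\lambda)^\vee \ast u = \sum_{|\alpha|=k} (m_\lambda^\alpha)^\vee \ast \partial^\alpha u$ with $(m_\lambda^\alpha)^\vee(x) = \lambda^{d - k} k_\alpha(\lambda x)$, $k_\alpha \in \mathcal{S}$, so Young gives $\norm{u}{L^a} \le \sum_{|\alpha|=k}\lambda^{-k}\norm{k_\alpha}{L^1}\norm{\partial^\alpha u}{L^a} \le C^k \lambda^{-k} \sup_{|\alpha|=k}\norm{\partial^\alpha u}{L^a}$, which rearranges to the claimed bound.

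\medskip

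\noindent\emph{Main obstacle.} The routine part is the scaling and Young's inequality; the one point requiring care is tracking the constant as $C^k$ uniformly in $a, b, \lambda$, rather than something like $k!$ or a constant blowing up as $b/a \to \infty$. The $\lambda$-uniformity is automatic from the scaling normalization, and the $(a,b)$-uniformity follows because all the $L^r$ norms of the fixed Schwartz kernels are controlled by $\norm{\cdot}{L^1} + \norm{\cdot}{L^\infty}$ uniformly in $r \in [1,\infty]$ (log-convexity of $L^r$ norms); the $C^k$ growth comes solely from the at-most-$k$-fold differentiation of the fixed bump functions $\theta, \eta$, whose derivative bounds grow at most geometrically. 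I would also remark that the hypothesis $u \in L^a$ ensures all convolutions are classically defined so no distributional subtlety arises.
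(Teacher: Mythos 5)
The paper does not actually prove Bernstein's Lemma: it states the result as background material, deferring to the cited reference \cite{CH95} for the Littlewood--Paley preliminaries. So there is no ``paper's own proof'' to compare against; what one should check is whether your proposal is a correct, self-contained argument. It is, and it is the standard one (essentially the argument in Bahouri--Chemin--Danchin, Lemma~2.1).

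Your Part~(1) is fine: writing $\partial^\alpha u = u*h^\alpha_\lambda$ with $\widehat{h^\alpha_\lambda}(\xi)=(2\pi i\xi)^\alpha\theta(\xi/\lambda)$, scaling to get $h^\alpha_\lambda(x)=\lambda^{k+d}h^\alpha(\lambda x)$, and applying Young with $1+\tfrac1b=\tfrac1a+\tfrac1r$ gives exactly the exponent $\lambda^{k+d(1/a-1/b)}$; the condition $b\ge a\ge 1$ ensures $r\in[1,b]$ so Young applies. The only point you leave slightly compressed is the claim $\norm{h^\alpha}{L^r}\le C^k$ uniformly in $\alpha$ and $r$; the clean way to finish it is $\norm{h^\alpha}{L^r}\le\norm{h^\alpha}{L^1}^{1/r}\norm{h^\alpha}{L^\infty}^{1-1/r}\le\norm{h^\alpha}{L^1}+\norm{h^\alpha}{L^\infty}$, then bound $\norm{h^\alpha}{L^\infty}\le\norm{(2\pi i\cdot)^\alpha\theta}{L^1}\le C^k$ and $\norm{h^\alpha}{L^1}\lesssim\norm{(1-\Delta)^N((2\pi i\cdot)^\alpha\theta)}{L^1}$ for some fixed $N>d/2$. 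The Leibniz factors $\partial^\beta(\xi^\alpha)$ produce at worst $k^{2N}$, which is polynomial in $k$ and hence absorbed into $C^k$; this is worth saying explicitly since it is the one place where the naive guess might be a factorial in $k$. Your Part~(2) lower bound is also correct: the partition $\sum_{|\alpha|=k}\binom{k}{\alpha}\xi^{2\alpha}=|\xi|^{2k}$ on the annulus is the right identity, the cardinality of $\{|\alpha|=k\}$ is polynomial in $k$, and $|\xi|^{-2k}$ on $\mathcal{C}_{r_1/2,2r_2}$ contributes at most a geometric factor, so the constant is indeed $C^k$. One minor bookkeeping slip: you wrote $(m^\alpha_\lambda)^\vee(x)=\lambda^{d-k}k_\alpha(\lambda x)$, which already folds in the extra $\lambda^{-k}$ coming from $(2\pi i\xi/\lambda)^\alpha=\lambda^{-k}(2\pi i\xi)^\alpha$; that is consistent, but it would be cleaner to state the factorization $\eta(\xi/\lambda)=\lambda^{-k}\sum_\alpha m^\alpha(\xi/\lambda)(2\pi i\xi)^\alpha$ first so the reader sees where the $\lambda^{-k}$ originates. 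Overall the argument is complete and correct.
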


Later on, it will be useful to decompose the product of two functions into their individual phase blocks. Formally, for two tempered distributions $f$ and $g$ we write their product as
$$
fg=\sum_{j,j'}\Delta_jf\Delta_{j'}g.
$$
We then split this sum into three different parts. The first is when the low frequencies of $f$ multiply the high frequencies of $g$, the second is when the low frequencies of $g$ multiply the high frequencies of $f$, and the third is when $j$ and $j'$ are comparable. This gives the nonhomogeneous Bony paraproduct decomposition.

\begin{definition}
    The nonhomogeneous paraproduct of $f$ by $g$ is defined by 
    $$
    T_gf = \sum_{j=1}^{\infty} S_{j-2}g\Delta_jf .
    $$
    The nonhomogeneous remainder of $f$ and $g$ is defined by 
    $$
    R(f,g) = \sum_{\substack{j,j'\\ |j-j'|\leq 1}}\Delta_jf\Delta_{j'}g.
    $$
    We then have the following nonhomogeneous Bony paraproduct decomposition
    \begin{equation}
    \label{paraproduct decomposition}
    fg=T_fg + T_gf + R(f,g) \text{ in } \mathcal{S}'(\R^d).
    \end{equation}
    We also define the homogeneous Bony paraproduct decomposition 
    \begin{equation}
    \label{homogeneous paraproduct decomposition}
    fg=\dot{T}_fg + \dot{T}_gf+\dot{R}(f,g) \text{ in } \mathcal{S}'(\R^d) /\mathcal{P}'(\R^d)
    \end{equation}
    by replacing the Littlewood-Paley operators in \eqref{paraproduct decomposition} with their homogeneous counterpart and where for $n \in \mathbb{Z}$ we define $\dot{S}_nf = \sum_{j=-\infty}^n \dot{\Delta}_jf$.
    
\end{definition}

\subsection{Function Spaces}
\begin{definition}
Let $p,q \in [1,\infty]$ and $ s \in \R$. We define the Besov space $B^{s}_{p,q}(\R^d)$ and Triebel-Lizorkin space $F^{s}_{p,q}(\R^d)$ as the set of $f \in \mathcal{S}'(\R^d)$ such that the following quasinorm (with the obvious modification when $q=\infty$) is finite
\begin{equation}
\label{standard space}
\begin{split}
\norm{f}{B^{s}_{p,q}} &= \left(\sum_{j \geq -1} 2^{jsq}\norm{\Delta_jf}{L^p}^q\right)^{1/q},\\
\norm{f}{F^{s}_{p,q}} &= \norm{\left( \sum_{j \geq -1} 2^{jsq}\abs{\Delta_jf}^q\right)^{1/q}}{L^p},
\end{split}
\end{equation}
We also define the homogeneous Besov space $\dot{B}^{s}_{p,q}(\R^d)$ and homogeneous Triebel-Lizorkin space $\dot{F}^{s}_{p,q}(\R^d)$ as the set of $f\in \mathcal{S}'(\R^d)/\mathcal{P}(\R^d)$ such that the following quasinorm (with the obvious modification when $q=\infty$) is finite
\begin{equation}
\label{standard space}
\begin{split}
\norm{f}{\dot{B}^{s}_{p,q}} &= \left(\sum_{j \in \mathbb{Z}} 2^{jsq}\norm{\dot{\Delta}_jf}{L^p}^q\right)^{1/q},\\
\norm{f}{\dot{F}^{s}_{p,q}} &= \norm{\left( \sum_{j \in \mathbb{Z}} 2^{jsq}\abs{\dot{\Delta}_jf}^q\right)^{1/q}}{L^p}.
\end{split}
\end{equation}
\end{definition}
 We remark that if $X^{s}_{p,q}(\R^d)$ is either $F^{s}_{p,q}(\R^d)$ or $B^{s}_{p,q}(\R^d)$, one can show $\norm{\cdot}{L^p}+\norm{\cdot}{\dot{X}^{s}_{p,q}}$ defines an equivalent norm on $X^{s}_{p,q}(\R^d)$ (see \cite{T97}). 
When working in Triebel-Lizorkin spaces, the estimates naturally lend themselves to the use of the maximal function, which we define below.

\begin{definition}
For $f \in L^1_{loc}(\R^d)$, the maximal function $\mathcal{M}f(x)$ is defined by 
$$
\mathcal{M}f(x) = \sup_{r>0}\frac{1}{|B_r(x)|}\int_{B_r(x)}|f(y)|dy.
$$
\end{definition}

\begin{lemma} \cite{FS71}
\label{vector maximal inequality}Let $(p,q) \in (1,\infty) \times (1,\infty]$ or $p=q=\infty$ be given. Suppose $\{f_j\}_{j \in \mathbb{Z}}$ is a sequence of functions in $L^p(\R^d)$ satisfying $\norm{f_j(x)}{l^q(\mathbb{Z})} \in L^p(\R^d)$. Then
\begin{equation}
\norm{\left(\sum_{j \in \mathbb{Z}} |\mathcal{M}f_j(x)|^q\right)^{\frac{1}{q}}}{L^p} \leq C \norm{\left(\sum_{j \in \mathbb{Z}} |f_j(x)|^q\right)^{\frac{1}{q}}}{L^p}.
\end{equation}
\end{lemma}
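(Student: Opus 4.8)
This is the Fefferman--Stein vector-valued maximal inequality, and the plan is to reduce it to the scalar Hardy--Littlewood theory---the weak type $(1,1)$ bound and the $L^{r}$-boundedness of $\mathcal{M}$ for $r\in(1,\infty]$---handling the $\ell^{q}$ summation in cases according to the relative sizes of $p$ and $q$. Two cases are immediate. If $q=\infty$, then $\mathcal{M}f_{j}(x)\le\mathcal{M}\big(\sup_{k}|f_{k}|\big)(x)$ for every $j$, so $\sup_{j}\mathcal{M}f_{j}\le\mathcal{M}\big(\sup_{k}|f_{k}|\big)$ pointwise and the scalar $L^{p}$ bound closes the estimate; this also settles $p=q=\infty$. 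If $q=p\in(1,\infty)$, one raises both sides to the $p$-th power, interchanges $\sum_{j}$ and $\int$ by Tonelli, and applies the scalar $L^{p}$ bound term by term.

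\noindent\textbf{The regime $1<p<q<\infty$.} I would view $T\colon(f_{j})_{j}\mapsto(\mathcal{M}f_{j})_{j}$ as a sublinear operator acting on $\ell^{q}$-valued functions; the diagonal case already shows $T$ is of strong type $(q,q)$ on $L^{q}(\ell^{q})$. The additional ingredient is the vector-valued weak type $(1,1)$ estimate
\[
\Big|\{x:\ \norm{(\mathcal{M}f_{j}(x))_{j}}{\ell^{q}}>\lambda\}\Big|\ \le\ \frac{C}{\lambda}\,\norm{\,\norm{(f_{j})_{j}}{\ell^{q}}\,}{L^{1}} ,
\]
which I would prove by the usual splitting: setting $G\eqdef\norm{(f_{j})_{j}}{\ell^{q}}$, write $f_{j}=f_{j}\mathbf{1}_{\{G\le\lambda\}}+f_{j}\mathbf{1}_{\{G>\lambda\}}$, control the bounded part using the strong $(q,q)$ bound and Chebyshev (via $\int_{\R^{d}}\min(G,\lambda)^{q}\,dx\le\lambda^{q-1}\norm{G}{L^{1}}$), and control the large part via $|\{G>\lambda\}|\le\lambda^{-1}\norm{G}{L^{1}}$ together with a Vitali covering argument on the set where the remaining maximal averages exceed $\lambda$. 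Marcinkiewicz interpolation between this weak $(1,1)$ bound and the strong $(q,q)$ bound then gives $T$ of strong type $(p,p)$ on $L^{p}(\ell^{q})$ for all $1<p<q$.

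\noindent\textbf{The regime $1<q<p<\infty$.} Since $p/q>1$, I would dualize in $L^{p/q}$: choose $u\ge0$ with $\norm{u}{L^{(p/q)'}}=1$ so that $\norm{\,\norm{(\mathcal{M}f_{j})_{j}}{\ell^{q}}\,}{L^{p}}^{q}=\norm{\sum_{j}(\mathcal{M}f_{j})^{q}}{L^{p/q}}=\sum_{j}\int_{\R^{d}}(\mathcal{M}f_{j})^{q}\,u\,dx$. Then I would invoke the weighted inequality $\int_{\R^{d}}(\mathcal{M}g)^{q}u\,dx\le C\int_{\R^{d}}|g|^{q}\,\mathcal{M}u\,dx$, valid for $q>1$, which follows by Marcinkiewicz interpolation from the Fefferman--Stein weighted weak type estimate $\int_{\{\mathcal{M}g>\lambda\}}u\,dx\le C\lambda^{-1}\int_{\{|g|>\lambda/2\}}|g|\,\mathcal{M}u\,dx$ (again a Vitali covering argument). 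Summing in $j$, applying H\"older in $L^{p/q}\times L^{(p/q)'}$, and using the scalar boundedness of $\mathcal{M}$ on $L^{(p/q)'}$, one bounds the right-hand side by $C\norm{\sum_{j}|f_{j}|^{q}}{L^{p/q}}$, and taking $q$-th roots finishes.

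\noindent\textbf{Main obstacle.} The hard part will be the two covering estimates---the vector-valued weak $(1,1)$ bound used for the interpolation in the regime $p<q$, and the weighted weak $(1,1)$ bound used for the duality in the regime $p>q$. Everything else is bookkeeping on top of the scalar maximal theory, whereas these two estimates genuinely require Vitali/Calder\'on--Zygmund covering arguments rather than the scalar maximal inequality alone. Since the inequality is classical and is quoted here from \cite{FS71}, the above merely records how the standard proof proceeds.
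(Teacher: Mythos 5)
The paper does not prove this lemma; it simply cites it from Fefferman and Stein \cite{FS71} as a black-box tool. So there is no in-paper argument to compare against, and the only question is whether your sketch is a correct account of how one would prove the classical vector-valued maximal inequality.

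Your outline is the standard Fefferman--Stein proof and is organized correctly: the $q=\infty$ case by the pointwise bound $\sup_j \mathcal{M}f_j \leq \mathcal{M}(\sup_k|f_k|)$; the diagonal $p=q$ case by Tonelli plus the scalar $L^p$ bound; the regime $p<q$ by a vector-valued weak $(1,1)$ estimate interpolated against the strong $(q,q)$ diagonal bound; and the regime $p>q$ by duality in $L^{p/q}$ together with the weighted inequality $\int(\mathcal{M}g)^q u \lesssim \int |g|^q\,\mathcal{M}u$ and the fact that $\mathcal{M}$ is bounded on $L^{(p/q)'}$. One caveat worth flagging: in the $p<q$ regime the splitting $f_j = f_j\mathbf{1}_{\{G\leq\lambda\}}+f_j\mathbf{1}_{\{G>\lambda\}}$ is not by itself sufficient, because the support bound $|\{G>\lambda\}|\leq\lambda^{-1}\|G\|_{L^1}$ does not control $\mathcal{M}$ of the bad part, which sees averages centered far from $\{G>\lambda\}$. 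The genuine argument runs a full Calder\'on--Zygmund decomposition of $G$ at height $\lambda$, replaces $f_j$ by its cube-averages on the CZ cubes to form the good part, and uses the mean-zero cancellation of the remainder on each cube together with the geometry of the dilated cubes to close the weak estimate --- this is more than a Vitali covering of $\{G>\lambda\}$. You do acknowledge this is the hard part, so the sketch is honest, but the phrasing undersells the cancellation structure that makes the bad-part estimate work. Everything else, including the $A_1$-type weighted bound and the duality bookkeeping in the $p>q$ case, is correct as stated.
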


\begin{lemma}\cite{S70}
\label{convolution bound by maximal function}
Let $\varphi \in L^1(\R^d)$ and set $\varphi_{\eps}(x)=\frac{1}{\eps^d}\varphi \left(\frac{x}{\eps}\right)$ for $\eps>0$. Further assume $\varphi$ is such that, if we set
$$
\psi(x) = \sup_{|y|\geq|x|} |\varphi(y)|,
$$
then $\norm{\psi}{L^1}=A<\infty$. Then for any $f \in L^p(\R^d)$ and $1 \leq p \leq \infty$ one has
\begin{equation}
\sup_{\eps>0}|(f\ast \varphi_{\eps})(x)| \leq A\mathcal{M}f(x).
\end{equation}
\end{lemma}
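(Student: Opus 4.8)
The plan is to deduce the bound from a sharp pointwise estimate for convolution against a radially nonincreasing kernel, after first replacing $\varphi$ by its radial majorant $\psi$ itself, and then to prove that estimate by a layer-cake decomposition of the kernel.

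\emph{Step 1: reduction to a radial kernel.} Set $\psi_\eps(x) = \eps^{-d}\psi(x/\eps)$. By hypothesis $\psi$ depends only on $|x|$ and is nonincreasing in $|x|$, and taking $x = y$ in the supremum defining $\psi$ gives $|\varphi(y)| \le \psi(y)$ for every $y$; hence $|\varphi_\eps(y)| = \eps^{-d}|\varphi(y/\eps)| \le \psi_\eps(y)$ pointwise, and therefore
$$
|(f \ast \varphi_\eps)(x)| \le \int_{\R^d} |f(x-y)|\,\psi_\eps(y)\,dy = (|f| \ast \psi_\eps)(x).
$$
Since $\norm{\psi_\eps}{L^1} = \norm{\psi}{L^1} = A$ for every $\eps > 0$, and since the desired estimate is trivial when $\mathcal{M}f(x) = \infty$, it suffices to prove that
$$
(|f| \ast \phi)(x) \le \norm{\phi}{L^1}\,\mathcal{M}f(x)
$$
for an arbitrary radially nonincreasing $\phi \in L^1(\R^d)$; applying this with $\phi = \psi_\eps$ and taking the supremum over $\eps > 0$ then finishes the proof.

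\emph{Step 2: layer-cake argument.} Since $\phi \ge 0$ is nonincreasing in $|x|$, each superlevel set $\{\phi > t\}$ agrees up to a null set with an open ball $B_{r(t)}(0)$ centered at the origin, and the layer-cake identity yields both $\phi(y) = \int_0^\infty \mathbf{1}_{B_{r(t)}(0)}(y)\,dt$ for a.e. $y$ and $\norm{\phi}{L^1} = \int_0^\infty |B_{r(t)}(0)|\,dt$. Inserting the first identity into the convolution, applying Tonelli's theorem, and changing variables $z = x - y$ (noting $y \in B_{r(t)}(0) \iff z \in B_{r(t)}(x)$),
$$
(|f| \ast \phi)(x) = \int_0^\infty \left(\int_{B_{r(t)}(x)} |f(z)|\,dz\right) dt \le \int_0^\infty |B_{r(t)}(0)|\,\mathcal{M}f(x)\,dt = \norm{\phi}{L^1}\,\mathcal{M}f(x),
$$
where the inequality uses that the average of $|f|$ over $B_{r(t)}(x)$ is at most $\mathcal{M}f(x)$ and that values $t$ with $r(t) = 0$ contribute nothing.

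The argument is essentially soft, and I expect the only point requiring genuine care to be the structure of the superlevel sets of $\phi$ in Step 2: verifying that a radially nonincreasing integrable function is, after modification on a null set, a bona fide superposition of normalized indicators of concentric balls, and observing that such a $\phi$ can fail to be finite only at the origin, which is harmless since $\{0\}$ is Lebesgue-null. Everything else is Tonelli's theorem together with the definition of $\mathcal{M}$.
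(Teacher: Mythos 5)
The paper does not prove this lemma — it is quoted from Stein \cite{S70} (Chapter III, Theorem 2 of \emph{Singular Integrals and Differentiability Properties of Functions}). Your proof is correct and is essentially Stein's classical argument: first one replaces $\varphi$ by its least radial decreasing majorant $\psi$, then exploits the fact that a radially nonincreasing integrable kernel is a superposition of (normalized) indicators of concentric balls, for each of which the average against $|f|$ is dominated by $\mathcal{M}f(x)$. The only cosmetic difference is that you realize the superposition as a continuous layer-cake integral over superlevel sets, whereas Stein approximates $\psi$ from below by finite step functions and passes to the limit by monotone convergence; the two are interchangeable, and each ball's contribution is controlled by $\mathcal{M}f(x)$ in exactly the same way.

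Two small points worth keeping in mind, both of which you flagged or which are implicit in your write-up: (i) in Step 2 the identity $\phi(y)=\int_0^\infty \mathbf{1}_{B_{r(t)}(0)}(y)\,dt$ can fail on spheres $|y|=r_0$ where the radial profile has a jump, but a monotone function has at most countably many discontinuities, so the exceptional set is a countable union of spheres and hence Lebesgue-null — your ``for a.e.\ $y$'' caveat is therefore exactly right; and (ii) one needs $|\{\phi>t\}|<\infty$ for each $t>0$ to ensure $r(t)<\infty$, which follows from Chebyshev's inequality and $\phi\in L^1$. With those observations the argument is complete and rigorous.
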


In what follows, we aim to refine the scale of Besov and Triebel-Lizorkin spaces by replacing the factor $2^{js}$ in \eqref{standard space} by $2^{js}\psi(2^{j})$ where $\psi$ is a function that grows slower than any power function and that is often referred to as a slowly varying function. We state some of the properties of slowly varying functions here but refer to \cite{K30} and \cite{BGT87} for more details.

\begin{definition}
\label{slowly varying}We let $\mathcal{M}$ denote the set of all functions $\psi:[0,\infty) \to [0,\infty)$ such that 
\begin{enumerate}
\item $\psi$ is continuous and $\psi$ is bounded away from zero on every interval of the form $[a,\infty)$ where $a>0$.
\item $\psi$ is slowly varying in the Karamata sense. i.e. 
$$
\lim_{t \to \infty} \frac{\psi(\lambda t)}{\psi(t)} = 1 \text{ for each } \lambda>0.
$$
\end{enumerate}
\end{definition}

We have the following representation of slowly varying functions from \cite{K30}.
\begin{proposition}
\label{Karamata Representation}
A function $\psi$ is slowly varying in the Karamata sense if and only if 
\begin{equation}
\label{representation of weight}
\psi(t) = c(t)\exp\left(\int_a^t \frac{\eps(s)}{s}\,ds\right),
\end{equation}
where $a>0$, $c(t)$ is measurable, $c(t) \to c>0$, and $\eps(t) \to 0$ as $t \to \infty.$
\end{proposition}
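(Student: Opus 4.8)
The plan is to pass to an additive formulation and reduce the statement to one genuinely nontrivial ingredient --- the uniform convergence theorem for slowly varying functions --- after which the representation drops out of a smoothing-by-averaging trick. Throughout I use that $\psi$ is continuous, hence measurable and locally bounded. The ``if'' direction is immediate: given the representation, for fixed $\lambda>0$,
\[
\frac{\psi(\lambda t)}{\psi(t)} = \frac{c(\lambda t)}{c(t)}\,\exp\!\left(\int_t^{\lambda t}\frac{\eps(s)}{s}\,ds\right),
\]
and since $\bigl|\int_t^{\lambda t}\eps(s)/s\,ds\bigr|\le |\log\lambda|\sup_{s\ge\min(t,\lambda t)}|\eps(s)|\to 0$ while $c(\lambda t)/c(t)\to 1$, the ratio tends to $1$. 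So the content is the ``only if'' direction.

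For that, set $h(x)\eqdef\log\psi(e^x)$ for $x$ large; slow variation becomes the statement that for each fixed $u\in\R$, $h(x+u)-h(x)\to 0$ as $x\to\infty$. The first step is the \emph{uniform convergence theorem}: this convergence holds uniformly for $u$ in compact sets, say $u\in[0,1]$. Fix $\eps>0$ and, for large $t$, set $A_t\eqdef\{v\in[0,2]:|h(t+v)-h(t)|\le\eps\}$; this is measurable (as $v\mapsto h(t+v)$ is continuous), and for each fixed $v$ we have $\mathbf{1}_{A_t}(v)\to 1$, so bounded convergence gives $|A_t|\to 2$. Choose $t_0$ with $|A_t|>3/2$ for $t\ge t_0$. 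Then for $x\ge t_0$ and $u\in[0,1]$ the sets $A_x$ and $u+A_{x+u}$ both lie in $[0,3]$ and have measure $>3/2$, so $|A_x\cap(u+A_{x+u})|\ge |A_x|+|u+A_{x+u}|-3>0$ and we may pick $w$ in the intersection. From $w\in A_x$, $|h(x+w)-h(x)|\le\eps$; from $w-u\in A_{x+u}$, $|h(x+w)-h(x+u)|\le\eps$; hence $|h(x+u)-h(x)|\le 2\eps$. As $\eps>0$ was arbitrary, $\sup_{u\in[0,1]}|h(x+u)-h(x)|\to 0$.

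With the uniform convergence theorem in hand, fix a large $x_0$, set $a\eqdef e^{x_0}$ (choosing $x_0$ so that $\psi>0$ on $[a,\infty)$, using that $\psi$ is bounded away from $0$ there), and introduce $\tilde h(x)\eqdef\int_x^{x+1}h(t)\,dt$ for $x\ge x_0$. Then $\tilde h(x)-h(x)=\int_0^1\bigl(h(x+t)-h(x)\bigr)dt\to 0$ by uniform convergence, while continuity of $h$ makes $\tilde h\in C^1$ with $\tilde h'(x)=h(x+1)-h(x)\to 0$ directly from the hypothesis. Writing $\delta(x)\eqdef h(x)-\tilde h(x)$ and $\eps_0(x)\eqdef h(x+1)-h(x)$, this gives
\[
h(x)=\tilde h(x_0)+\delta(x)+\int_{x_0}^x\eps_0(v)\,dv\qquad(x\ge x_0),\qquad \delta(x)\to 0,\ \ \eps_0(x)\to 0 .
\]
Exponentiating and substituting $t=e^x$, so that $\int_{x_0}^{\log t}\eps_0(v)\,dv=\int_a^t \eps_0(\log r)\,\tfrac{dr}{r}$, yields
\[
\psi(t)=c(t)\exp\!\left(\int_a^t\frac{\eps(r)}{r}\,dr\right)\quad(t\ge a),\qquad c(t)\eqdef e^{\tilde h(x_0)}e^{\delta(\log t)},\ \ \eps(r)\eqdef\eps_0(\log r),
\]
where $c$ is measurable, $c(t)\to e^{\tilde h(x_0)}>0$, and $\eps(r)\to 0$. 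Extending $\eps$ by $0$ on $(0,a)$ and redefining $c\eqdef\psi$ there absorbs the behavior of $\psi$ on the bounded range $(0,a)$ without affecting the limit of $c$, which completes the representation.

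I expect the uniform convergence theorem to be the main obstacle: the pointwise hypothesis is strictly weaker than uniformity, and bridging the gap requires the measure-theoretic intersection argument above (a Steinhaus/Baire-type device). Every other step --- the ``if'' direction, the averaging, the change of variables --- is routine bookkeeping.
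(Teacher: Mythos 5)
The paper does not prove Proposition \ref{Karamata Representation}: it states it as a known result and cites Karamata's original 1930 paper \cite{K30} (and implicitly \cite{BGT87}). So there is no in-paper proof to compare against, and your submission is a self-contained argument where none was given.

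Your proof is correct, and it is the standard route to the representation theorem. The two genuine ingredients are (i) the uniform convergence theorem, which you obtain via a Steinhaus-type intersection-of-positive-measure-sets argument on the sets $A_t=\{v\in[0,2]:|h(t+v)-h(t)|\le\eps\}$, and (ii) the averaging/mollification $\tilde h(x)=\int_x^{x+1}h(t)\,dt$, which turns pointwise and uniform convergence of differences into the $C^1$ comparison function with derivative $\eps_0(x)=h(x+1)-h(x)\to 0$. The paper's Definition \ref{slowly varying} builds in continuity of $\psi$ and positivity (``bounded away from zero on $[a,\infty)$''), which is exactly what you invoke to make $h=\log\psi(e^{\cdot})$ well-defined and continuous, to guarantee measurability of $A_t$, and to apply bounded convergence; so your hypotheses line up with the paper's. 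One small remark: continuity of $\psi$ does not let you bypass the Steinhaus device (pointwise convergence of a family of continuous functions need not be uniform, and Dini is unavailable without monotonicity), so the measure-theoretic step is genuinely necessary here, as you anticipate. The bookkeeping in the change of variables $t=e^x$, $r=e^v$ and the extension of $c$ and $\eps$ below $a$ is handled correctly.
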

Using  \eqref{representation of weight}, it is straightforward to show that if $\psi$ is slowly varying, then
$$
\lim_{t \to \infty} \frac{\psi(t)}{t^{\beta}}=0 \text{ for any } \beta>0.
$$



A basic example of a slowly varying function is $\psi(t)=\log^\alpha(e+t)-1$ for $\alpha >0$, which satisfies \eqref{representation of weight} with $\eps(s)=\frac{\alpha}{\log(e+s)}, a=e-1,$ and $c=1$. Surprisingly, one can construct a slowly varying function $\psi$ that experiences infinite oscillation in the sense that $\liminf_{x \to \infty}\psi(x)=0$ and $\limsup_{x \to \infty}\psi(x)=\infty$. The construction of such a function can be found in \cite{BKS13}.

\begin{definition}
\label{Generalized function spaces}
Let $p,q \in [1,\infty], s \in \R$, and $ \psi \in \mathcal{M}$. We define the Besov and Triebel-Lizorkin spaces of generalized smoothness, $B^{s,\psi}_{p,q}(\R^d)$ and $F^{s,\psi}_{p,q}(\R^d)$, respectively,  as the set $f \in \mathcal{S}'(\R^d)$ such that the following quasinorms are finite
\begin{equation}
\label{generalized function spaces}
\begin{split}
\norm{f}{B^{s,\psi}_{p,q}} &= \left(\sum_{j \geq -1} 2^{jsq}\psi^q(2^j)\norm{\Delta_jf}{L^p}^q\right)^{1/q},\\
\norm{f}{F^{s,\psi}_{p,q}} &= \norm{\left( \sum_{j \geq -1} 2^{jsq}\psi^q(2^j)\abs{\Delta_jf}^q\right)^{1/q}}{L^p},
\end{split}
\end{equation}
with the obvious modifications when $q=\infty$.
We also define the homogeneous spaces $\dot{B}^{s,\psi}_{p,q}(\R^d)$ and $\dot{F}^{s,\psi}_{p,q}(\R^d)$ as the set of $f \in \mathcal{S}'(\R^d) /\mathcal{P}(\R^d)$ such that the following quasinorms are finite
\begin{equation}
\label{homogeneous generalized function spaces}
\begin{split}
\norm{f}{\dot{B}^{s,\psi}_{p,q}} &= \left(\sum_{j \in \mathbb{Z}} 2^{jsq}\psi^q(2^j)\norm{\dot{\Delta}_jf}{L^p}^q\right)^{1/q},\\
\norm{f}{\dot{F}^{s,\psi}_{p,q}} &= \norm{\left( \sum_{j \in \mathbb{Z}} 2^{jsq}\psi^q(2^j)\abs{\dot{\Delta}_jf}^q\right)^{1/q}}{L^p},
\end{split}
\end{equation}
with the obvious modifications when $q=\infty$.
\end{definition}
For properties of these spaces, such as completeness and independence of the dyadic partition of unity, we refer the reader to \cite{Moura01}. We remark that if $X^{s,\psi}_{p,q}(\R^d)$ is either $F^{s,\psi}_{p,q}(\R^d)$ or $B^{s,\psi}_{p,q}(\R^d)$, one can show $\norm{\cdot}{L^p}+\norm{\cdot}{\dot{X}^{s,\psi}_{p,q}}$ defines an equivalent norm on $X^{s,\psi}_{p,q}(\R^d)$.

\begin{proposition}
\label{embedding theorem}
Suppose that $s\geq d/p$,  $ \psi \in \mathcal{M}$, and $\frac{1}{q}+\frac{1}{q'}=1$. If
$$
\int_{1}^{\infty} \frac{dt}{t\psi^{q'}(t)}<\infty,
$$
then $B^{s,\psi}_{p,q}(\R^d) \hookrightarrow B^{s-d/p}_{\infty,1}(\R^d)$.
\end{proposition}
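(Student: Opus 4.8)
The plan is to prove the embedding $B^{s,\psi}_{p,q}(\R^d)\hookrightarrow B^{s-d/p}_{\infty,1}(\R^d)$ by combining the Bernstein inequality with a Hölder argument in the frequency variable $j$, where the summability condition $\int_1^\infty \frac{dt}{t\psi^{q'}(t)}<\infty$ is exactly what makes the $\ell^{q'}$ norm of the weight sequence $(2^{-js}\psi(2^j)^{-1})_j$ finite once $s\ge d/p$. First I would fix $f\in B^{s,\psi}_{p,q}(\R^d)$ and write the target norm as
\begin{equation*}
\norm{f}{B^{s-d/p}_{\infty,1}}=\sum_{j\geq -1}2^{j(s-d/p)}\norm{\Delta_jf}{L^\infty}.
\end{equation*}
Since $\Delta_j f$ has Fourier support in a ball of radius $\sim 2^j$, part (\ref{Bernstein in Ball}) of Bernstein's Lemma (with $k=0$, $a=p$, $b=\infty$) gives $\norm{\Delta_j f}{L^\infty}\lesssim 2^{jd/p}\norm{\Delta_j f}{L^p}$, so that
\begin{equation*}
2^{j(s-d/p)}\norm{\Delta_j f}{L^\infty}\lesssim 2^{js}\norm{\Delta_j f}{L^p}=\bigl(2^{js}\psi(2^j)\norm{\Delta_j f}{L^p}\bigr)\cdot\bigl(\psi(2^j)^{-1}\bigr).
\end{equation*}
(The $j=-1$ term is handled separately and is bounded by $\norm{\Delta_{-1}f}{L^p}$ up to a constant, using that $\psi$ is bounded away from $0$ near any fixed scale.)

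Next I would sum over $j$ and apply Hölder's inequality in $j$ with exponents $q$ and $q'$:
\begin{equation*}
\sum_{j\geq -1}2^{j(s-d/p)}\norm{\Delta_j f}{L^\infty}\lesssim\left(\sum_{j\geq -1}2^{jsq}\psi^q(2^j)\norm{\Delta_j f}{L^p}^q\right)^{1/q}\left(\sum_{j\geq -1}\psi^{-q'}(2^j)\right)^{1/q'}=\norm{f}{B^{s,\psi}_{p,q}}\left(\sum_{j\geq -1}\psi^{-q'}(2^j)\right)^{1/q'}.
\end{equation*}
It then remains to show the last sum is finite. This is where the hypothesis enters: by an integral comparison, $\sum_{j\geq 0}\psi^{-q'}(2^j)\sim\sum_{j\geq 0}\int_{2^j}^{2^{j+1}}\frac{dt}{t\psi^{q'}(t)}\cdot\frac{1}{\log 2}$, and since $\psi$ is slowly varying we have $\psi(t)\sim\psi(2^j)$ uniformly for $t\in[2^j,2^{j+1}]$ (this uniformity on dyadic blocks is a standard consequence of the Karamata representation in Proposition \ref{Karamata Representation}), so $\sum_{j\geq 0}\psi^{-q'}(2^j)\sim\int_1^\infty\frac{dt}{t\psi^{q'}(t)}<\infty$. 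Combining the displays gives $\norm{f}{B^{s-d/p}_{\infty,1}}\lesssim\norm{f}{B^{s,\psi}_{p,q}}$, which is the claimed embedding.

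The only genuinely delicate point — and the step I would write out most carefully — is the passage from the discrete sum $\sum_j\psi^{-q'}(2^j)$ to the integral $\int_1^\infty\frac{dt}{t\psi^{q'}(t)}$. Because slowly varying functions can oscillate (as the paper notes, one can have $\liminf\psi=0$, $\limsup\psi=\infty$), one cannot simply treat $\psi$ as monotone; instead one invokes the uniform convergence theorem for slowly varying functions, which yields $\sup_{t\in[2^j,2^{j+1}]}\psi(t)/\inf_{t\in[2^j,2^{j+1}]}\psi(t)\to 1$ as $j\to\infty$, so the block sums and block integrals are comparable with a constant independent of $j$. When $q=1$ (so $q'=\infty$) the statement degenerates slightly and the condition should be read as $\inf_{t\ge 1}\psi(t)>0$ together with an appropriate decay, but for the regime $q\in(1,\infty)$ relevant to the main theorems the argument above is complete. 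Finally, for $s>d/p$ one could alternatively absorb the extra decay $2^{-j(s-d/p)}$ and weaken the hypothesis on $\psi$, but the statement as given with $s\ge d/p$ follows directly from the computation above.
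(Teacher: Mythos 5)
Your proof is correct and is essentially the same as the paper's: both hinge on the Hölder inequality in $j$ with exponents $(q,q')$ and the comparison $\sum_{j\geq -1}\psi^{-q'}(2^j)\sim\int_1^\infty \frac{dt}{t\psi^{q'}(t)}$. The only cosmetic difference is that you inline the Bernstein estimate $\norm{\Delta_jf}{L^\infty}\lesssim 2^{jd/p}\norm{\Delta_jf}{L^p}$ directly, whereas the paper first establishes $B^{s,\psi}_{p,q}\hookrightarrow B^s_{p,1}$ and then invokes the classical embedding $B^s_{p,1}\hookrightarrow B^{s-d/p}_{\infty,1}$; your explicit appeal to the uniform convergence theorem for the sum-to-integral step is a welcome extra level of care that the paper leaves implicit.
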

\begin{proof}
It suffices to show that $B^{s,\psi}_{p,q}(\R^d)\hookrightarrow B^{s}_{p,1}(\R^d)$ and appeal to the classical embeddings of Besov spaces. Indeed, note that
$$
\sum_{j \geq -1} 2^{js}\norm{\Delta_j f}{L^p} \leq \left(\sum_{j \geq -1} \frac{1}{\psi^{q'}(2^j)}\right)^{\frac{1}{q'}} \norm{f}{B^{s,\psi}_{p,q}}.
$$
Then,
$$
\sum_{j \geq -1} \frac{1}{\psi^{q'}(2^j)} \sim \int_\frac{1}{2}^{\infty} \frac{1}{\psi^{q'}(2^x)} \,dx \sim  \int_1^{\infty} \frac{1}{t\psi^{q'}(t)}\, dt,
$$
where in the last inequality we took the substitution $x=\log_2(t)$.
\end{proof}

\begin{proposition}
\label{key triebel lizorkin embedding}
Suppose that $s\geq d/p$.  $ \psi \in \mathcal{M}$, and $\frac{1}{p}+\frac{1}{p'}=1$. Then $F^{s,\psi}_{p,q}(\R^d) \hookrightarrow B^{s-\frac{d}{p}}_{\infty,1}(\R^d)$ if
$$
\int_1^{\infty} \frac{1}{t\psi^{p'}(t)}\,dt<\infty.
$$
\end{proposition}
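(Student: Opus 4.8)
The plan is to reduce the Triebel--Lizorkin embedding to the Besov embedding already established in Proposition \ref{embedding theorem}, by exploiting the elementary inclusion $F^{s,\psi}_{p,q}(\R^d)\hookrightarrow B^{s,\psi}_{p,p}(\R^d)$ when $q\le p$, or more robustly by a direct argument that mirrors the proof of Proposition \ref{embedding theorem}. First I would recall the classical fact that $F^{s}_{p,q}(\R^d)\hookrightarrow F^{s}_{p,\infty}(\R^d)$ and that $F^{s}_{p,\infty}(\R^d)$ has the pointwise description via $\sup_j 2^{js}|\Delta_j f(x)|$; then for any fixed $j$ one has $2^{js}\|\Delta_j f\|_{L^p}\le \|\sup_k 2^{ks}|\Delta_k f|\|_{L^p}\lesssim \|f\|_{F^{s}_{p,\infty}}$, which is the key mechanism that converts an $F$-norm into control of individual blocks in $L^p$. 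Carrying the weight $\psi$ along, the same computation gives, for each $j$,
$$
2^{js}\psi(2^j)\|\Delta_j f\|_{L^p}\lesssim \left\|\Bigl(\sum_{k\geq -1}2^{ksq}\psi^q(2^k)|\Delta_k f|^q\Bigr)^{1/q}\right\|_{L^p}=\|f\|_{F^{s,\psi}_{p,q}}.
$$

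Next I would sum over $j$ after inserting a dummy factor $\psi(2^j)/\psi(2^j)$ and applying Hölder's inequality in the $j$ variable with exponents $p$ and $p'$, exactly as in Proposition \ref{embedding theorem} but with $p'$ in place of $q'$:
$$
\sum_{j\geq -1} 2^{js}\|\Delta_j f\|_{L^p}\le \left(\sum_{j\geq -1}\frac{1}{\psi^{p'}(2^j)}\right)^{1/p'}\sup_{j\geq -1}\Bigl(2^{js}\psi(2^j)\|\Delta_j f\|_{L^p}\Bigr)\lesssim \left(\sum_{j\geq -1}\frac{1}{\psi^{p'}(2^j)}\right)^{1/p'}\|f\|_{F^{s,\psi}_{p,q}}.
$$
The series $\sum_{j\geq -1}\psi^{-p'}(2^j)$ is comparable to $\int_1^\infty \frac{dt}{t\psi^{p'}(t)}$ by the same substitution $x=\log_2 t$ used before, so the hypothesis makes it finite. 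This shows $F^{s,\psi}_{p,q}(\R^d)\hookrightarrow B^{s}_{p,1}(\R^d)$, and then the classical Besov embedding $B^{s}_{p,1}(\R^d)\hookrightarrow B^{s-d/p}_{\infty,1}(\R^d)$ (valid since $s\ge d/p$) finishes the proof, just as in Proposition \ref{embedding theorem}.

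I would double-check one subtlety that is the only real obstacle here: justifying the pointwise bound $2^{js}\|\Delta_j f\|_{L^p}\lesssim \|f\|_{F^{s,\psi}_{p,q}}$ rigorously when $q$ is finite, since the single-term estimate $2^{ksq}\psi^q(2^k)|\Delta_k f(x)|^q\le \sum_{k}2^{ksq}\psi^q(2^k)|\Delta_k f(x)|^q$ is immediate pointwise and taking $L^p$ norms (monotone in the sense that a single summand is dominated by the full sum) gives it directly without even passing through $q=\infty$. One must also use that $\psi$ is bounded away from zero on $[1,\infty)$, so that $\psi(2^j)\gtrsim 1$ for $j\ge 0$ and the term $j=-1$ is harmless; this is guaranteed by part (1) of Definition \ref{slowly varying}. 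With these observations the argument is entirely parallel to the Besov case, the role of $q'$ simply being played by $p'$ because the outer integrability in $F^{s,\psi}_{p,q}$ is in $L^p$ rather than $\ell^q$.
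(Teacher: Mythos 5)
Your plan has a genuine gap, and the displayed H\"older step is not correct as written. You write
$$
\sum_{j\geq -1} 2^{js}\|\Delta_j f\|_{L^p}\le \Bigl(\sum_{j\geq -1}\psi(2^j)^{-p'}\Bigr)^{1/p'}\sup_{j\geq -1}\bigl(2^{js}\psi(2^j)\|\Delta_j f\|_{L^p}\bigr),
$$
but H\"older with exponents $p,p'$ replaces the $\sup$ on the right by the $\ell^p$ norm $\bigl(\sum_j (2^{js}\psi(2^j)\|\Delta_j f\|_{L^p})^p\bigr)^{1/p}$, not by a $\sup$. The $\sup$-version you wrote would instead come from $\sum_j a_j b_j\le (\sum_j a_j)\sup_j b_j$, which requires the much stronger $\ell^1$-summability $\sum_j\psi(2^j)^{-1}<\infty$, i.e.\ $\int_1^\infty \frac{dt}{t\psi(t)}<\infty$, rather than the $\ell^{p'}$-condition in the hypothesis.

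Even with the H\"older step corrected, the approach cannot recover the full range of $q$. The corrected inequality requires you to control $\bigl(\sum_j (2^{js}\psi(2^j)\|\Delta_j f\|_{L^p})^p\bigr)^{1/p}=\|f\|_{B^{s,\psi}_{p,p}}=\|f\|_{F^{s,\psi}_{p,p}}$ by $\|f\|_{F^{s,\psi}_{p,q}}$, and the embedding $F^{s,\psi}_{p,q}\hookrightarrow F^{s,\psi}_{p,p}$ holds only when $q\le p$. For $q>p$ the route through an inhomogeneous Besov space with integrability index $p$ (whether $B^s_{p,1}$ or $B^{s,\psi}_{p,p}$) is simply not available from $F^{s,\psi}_{p,q}$; this is precisely the content of the Jawerth--Franke phenomenon. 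The paper's proof avoids this by going through the Triebel--Lizorkin-to-Besov embedding $F^{s}_{p,q}(\R^d)\hookrightarrow B^{s-d/p}_{\infty,p}(\R^d)$ (valid for \emph{all} $q$, cf.\ 11.4(iii) in \cite{T97}), generalised to the weighted scale as $F^{s,\psi}_{p,q}(\R^d)\hookrightarrow B^{s-d/p,\psi}_{\infty,p}(\R^d)$, and then applies H\"older in $j$ with exponents $p,p'$ to pass to $B^{s-d/p}_{\infty,1}(\R^d)$. That embedding is the essential ingredient your argument is missing; it cannot be replaced by the trivial single-block bound $2^{js}\psi(2^j)\|\Delta_j f\|_{L^p}\lesssim\|f\|_{F^{s,\psi}_{p,q}}$, because that bound only gives $\ell^\infty$ (not $\ell^p$) control in $j$.
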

\begin{proof}
We begin by recalling that if $sp\geq d$, then  $F^{s}_{p,q}(\R^d) \hookrightarrow B^{s-\frac{d}{p}}_{\infty,p}(\R^d)$ (see 11.4(iii) in \cite{T97}). A similar argument yields $F^{s,\psi}_{p,q}(\R^d) \hookrightarrow B^{s-\frac{d}{p},\psi}_{p,q}(\R^d)$. Then, by Hölder's inequality, $B^{s-\frac{d}{p},\psi}_{p,q}(\R^d) \hookrightarrow B^{s-\frac{d}{p}}_{\infty,1}(\R^d)$ if $((\psi(2^j))^{-1}) \in l^{p'}(\mathbb{Z}_{\geq-1})$ which, as in the proof above, is equivalent to the condition that $\int_1^{\infty} \frac{1}{t\psi^{p'}(t)}\,dt<\infty$.
\end{proof}

\begin{definition}
\label{good sv functions} Let $r \in [1,\infty].$ We say that $\psi \in \mathcal{M}_{r}$ if
\begin{enumerate}
    \item $\psi \in \mathcal{M}$.
    \item $\psi$ is non-decreasing.
    \item $\int_1^{\infty} \frac{1}{t\psi^r(t)}\,dt<\infty.$
\end{enumerate}
\end{definition}

A basic example of such a function is $\psi(t)=\log^{\alpha}(e+t)-1$ for $\alpha > \frac{1}{r}.$ We do not consider oscillatory slowly varying functions since we are only concerned about the local smoothness of the functions in these generalized spaces, which is parametrized by the growth rate of $\psi$ at infinity. Furthermore, when proving calculus estimates in the appendix, we lean heavily on the fact that $\psi$ is non-decreasing. If $\psi$ were to have a monotone equivalence, the results would still follow. But, in general, not every slowly varying function has a monotone equivalence (see e.g. \cite{BKS13}).

\section{Well-Posedness in Generalized Besov Spaces}\label{Section Besov}
\subsection{Short-Time Existence}

We now prove an a priori estimate of smooth solutions to \eqref{euler velocity} in $B^{s,\psi}_{p,q}(\R^d)$.
\begin{theorem}\label{euler a-priori estimate besov} Suppose $u$ is a smooth solution to \eqref{euler velocity} on $[0,T]\times \R^d$ with $u_0 \in B^{s,\psi}_{p,q}(\R^d)$, $s=d/p+1$, $p\in(1,\infty)$, and $q \in [1,\infty]$. Then for any $t \leq T$
\begin{equation}
\norm{u(t)}{B^{s,\psi}_{p,q}}\leq \norm{u_0}{B^{s,\psi}_{p,q}}\exp\left(C\int_0^t \norm{\nabla u(\tau)}{L^{\infty}}\,d\tau\right).
\end{equation}
\end{theorem}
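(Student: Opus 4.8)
\emph{Proposal.} The plan is the standard Littlewood--Paley/transport scheme for the Euler equations, adapted to the generalized weight $2^{js}\psi(2^j)$ of \eqref{generalized function spaces}. First I would apply $\Delta_j$ to the momentum equation in \eqref{euler velocity} and commute it past the transport field, obtaining
$$
\partial_t\Delta_j u + u\cdot\nabla\Delta_j u = -[\Delta_j,\,u\cdot\nabla]\,u - \nabla\Delta_j p =: F_j .
$$
Since $\mathrm{div}\,u=0$, the flow $X_t$ of \eqref{Flow Map ODE} is volume preserving, so differentiating $\Delta_j u(t,X_t(x))$ along characteristics and taking $L^p_x$ norms (using Minkowski's integral inequality and the volume preservation of $X_t$) gives, for every $j\ge-1$ and $t\le T$,
$$
\norm{\Delta_j u(t)}{L^p}\ \le\ \norm{\Delta_j u_0}{L^p}\ +\ \int_0^t \norm{F_j(\tau)}{L^p}\,d\tau ;
$$
equivalently one may pair the equation for $\Delta_j u$ with $|\Delta_j u|^{p-2}\Delta_j u$ and note the transport term has zero integral. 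Multiplying by $2^{js}\psi(2^j)$ and taking the $\ell^q$ norm in $j$ reduces the theorem to estimating the two pieces of $F_j$ in the norm \eqref{generalized function spaces}.

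For the commutator piece I would invoke the generalized-smoothness analogue of the classical commutator estimate proved in the appendix: there is a sequence $(c_j)_{j\ge-1}$ with $\norm{(c_j)}{\ell^q}\le1$ such that $2^{js}\psi(2^j)\norm{[\Delta_j, v\cdot\nabla]a}{L^p}\lesssim c_j\bigl(\norm{\nabla v}{L^\infty}\norm{a}{B^{s,\psi}_{p,q}}+\norm{\nabla a}{L^\infty}\norm{v}{B^{s,\psi}_{p,q}}\bigr)$. The decisive point is that in the Euler application the transporting and transported fields coincide, $v=a=u$, so the two-term bound collapses to $\lesssim c_j\norm{\nabla u}{L^\infty}\norm{u}{B^{s,\psi}_{p,q}}$, which is \emph{linear} in $\norm{u}{B^{s,\psi}_{p,q}}$ --- this is what produces the exponential (rather than doubly-exponential) bound in the statement. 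Here $s=d/p+1>1$ is used so that $\nabla u$ is bounded (via Proposition~\ref{embedding theorem}), and monotonicity of $\psi$ enters in the appendix proof exactly as in the classical one.

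For the pressure piece the algebraic structure of the nonlinearity is essential. Taking the divergence of the momentum equation and using $\mathrm{div}\,u=0$ gives $-\Delta p=\sum_{j,k}\partial_j u_k\,\partial_k u_j$, with \emph{no} derivative falling outside the quadratic term; hence $\nabla\Delta_j p=\nabla(-\Delta)^{-1}\Delta_j\bigl(\textstyle\sum_{j',k}\partial_{j'}u_k\,\partial_k u_{j'}\bigr)$. The multiplier $\nabla(-\Delta)^{-1}$ is homogeneous of degree $-1$ and smooth away from the origin, so on a dyadic block it gains a factor $2^{-j}$ in $L^p$ by Bernstein's Lemma; consequently
$$
\norm{\nabla p}{B^{s,\psi}_{p,q}}\ \lesssim\ \Big\|\textstyle\sum_{j',k}\partial_{j'}u_k\,\partial_k u_{j'}\Big\|_{B^{s-1,\psi}_{p,q}},\qquad s-1=\tfrac dp>0,
$$
and since $s-1>0$ the product estimate of the appendix (with both factors components of $\nabla u$) yields $\bigl\|\sum_{j',k}\partial_{j'}u_k\partial_k u_{j'}\bigr\|_{B^{d/p,\psi}_{p,q}}\lesssim\norm{\nabla u}{L^\infty}\norm{\nabla u}{B^{d/p,\psi}_{p,q}}\sim\norm{\nabla u}{L^\infty}\norm{u}{B^{s,\psi}_{p,q}}$. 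Inserting the two estimates into the integral inequality above gives $\norm{u(t)}{B^{s,\psi}_{p,q}}\le\norm{u_0}{B^{s,\psi}_{p,q}}+C\int_0^t\norm{\nabla u(\tau)}{L^\infty}\norm{u(\tau)}{B^{s,\psi}_{p,q}}\,d\tau$, and Gr\"onwall's inequality closes the proof.

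I expect the genuine work to be twofold. First, the low-frequency block $\Delta_{-1}\nabla p$ is invisible to the homogeneous-multiplier argument (the symbol $\xi/|\xi|^2$ is singular at the origin); I would handle this by passing to the equivalent norm $\norm{\cdot}{L^p}+\norm{\cdot}{\dot B^{s,\psi}_{p,q}}$, bounding the homogeneous part uniformly over $j\in\mathbb Z$ as above and controlling $\norm{\nabla p}{L^p}$ separately by Calder\'on--Zygmund theory, using the decay a smooth Euler solution possesses. Second --- and this is the point that genuinely distinguishes this paper from the classical case --- one must verify that the commutator and product estimates survive the replacement $2^{js}\rightsquigarrow2^{js}\psi(2^j)$; this is precisely the role of the appendix, and it is where $\psi\in\mathcal M$ and its monotonicity (Definition~\ref{good sv functions}) are used. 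Once those ingredients are granted, the remainder is the classical Besov-space argument.
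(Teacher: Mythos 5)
Your proposal matches the paper's proof essentially step for step: commute $\Delta_j$ past the transport operator, integrate along the volume-preserving flow to get the $L^p$ integral inequality, weight by $2^{js}\psi(2^j)$ and take $\ell^q$, close the commutator with Proposition~\ref{besov commutator estimate} (exploiting, as you note, that both entries are $u$ so the two-term bound becomes linear in the high norm), treat $\nabla p = \nabla(-\Delta)^{-1}\operatorname{div}(u\cdot\nabla u)$ by the homogeneity of the multiplier together with the Leibniz rule of Proposition~\ref{Leibniz Rule in Besov Spaces} for the homogeneous piece and a Calder\'on--Zygmund/$L^p$ bound for the low-frequency piece, then apply Gr\"onwall. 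The only slight imprecision is the remark that $\nabla u\in L^\infty$ follows from Proposition~\ref{embedding theorem}: for the a priori estimate one assumes $u$ is smooth, so $\nabla u\in L^\infty$ is given and one simply leaves $\norm{\nabla u}{L^\infty}$ in the exponent; the embedding and the hypothesis $\psi\in\mathcal M_{q'}$ are invoked only later, when closing the estimate in the local existence theorem.
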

\begin{proof}
We begin by applying $\Delta_j$ to both sides of $\eqref{euler velocity}_1$ to see 
\begin{equation}
\label{besov apriori pre langrian}
\partial_t \Delta_j u + u \cdot \nabla \Delta_j u = [u \cdot \nabla, \Delta_j]u  - \Delta_j\nabla p,
\end{equation}
where $[u \cdot \nabla, \Delta_j]u=u\cdot \nabla \Delta_ju - \Delta_j(u\cdot \nabla u)$. Let $X_t(x)$ be the flow map the solving the ordinary differential equation
\begin{equation}
\label{flow map ode}
\begin{cases}
\frac{d}{dt}X_t(x) = u(t,X_t(x)),\\
X_t(x)\vert_{t=0}=x.
\end{cases}
\end{equation}
Using the chain rule we rewrite \eqref{besov apriori pre langrian} in Lagrangrian coordinates to see
\begin{equation}
\label{pre L^p apriori besov 0}
\frac{d}{dt}(\Delta_ju(t,X_t(x))) = [u \cdot \nabla, \Delta_j]u(t,X_t(x))-\Delta_j\nabla p (t,X_t(x)).
\end{equation}
Applying $\tilde{\Delta}_j$ to both sides of \eqref{pre L^p apriori besov 0} and using that $\Delta_j=\Delta_j\tilde{\Delta}_j$ yields
\begin{equation}
\label{pre L^p apriori besov}
\frac{d}{dt}(\Delta_ju(t,X_t(x))) = \tilde{\Delta}_j[u \cdot \nabla, \Delta_j]u(t,X_t(x))-\Delta_j\nabla p (t,X_t(x)).
\end{equation}
Integrating \eqref{pre L^p apriori besov}  in time, taking the $L^p(\mathbb{R}^d)$ norm, and using that $X_t$ is a volume preserving diffeomorphism due to the divergence-free condition on $u$ gives
\begin{equation}
\label{L^p apriori besov}
\norm{\Delta_j u(t)}{L^p} \leq \norm{\Delta_ju_0}{L^p} + \int_0^t \left\{ \norm{\tilde{\Delta}_j[u \cdot \nabla, \Delta_j]u(\tau)}{L^p} + \norm{\Delta_j\nabla p(\tau)}{L^p}\right\}\,d\tau.
\end{equation}
Multiplying by $2^{js}\psi(2^j)$, taking the $l^q(\mathbb{Z}_{\geq -1})$ norm in $j$, and using Minkowski's inequality yields
\begin{equation}
\label{gbesov estimate 1}
\norm{u(t)}{B^{s,\psi}_{p,q}} \leq \norm{u_0}{B^{s,\psi}_{p,q}} + \int_0^t \norm{\nabla p(\tau)}{B^{s,\psi}_{p,q}}\,d\tau + \int_0^t \left\{ \sum_{j \geq -1} \left(2^{js}\psi(2^j)\norm{\tilde{\Delta}_j[u \cdot \nabla, \Delta_j]u(\tau)}{L^p}\right)^q\right\}^{\frac{1}{q}}\,d\tau.
\end{equation}
By Proposition \ref{besov commutator estimate} we see 
\begin{equation}
\label{besov commutator apriori}
\int_0^t \left\{ \sum_{j \geq -1} \left(2^{js}\psi(2^j)\norm{\tilde{\Delta}_j[u \cdot \nabla, \Delta_j]u(\tau)}{L^p}\right)^q\right\}^{\frac{1}{q}}\,d\tau \lesssim \int_0^t\norm{\nabla u(\tau)}{L^{\infty}}\norm{u(\tau)}{B^{s,\psi}_{p,q}}\,d\tau.
\end{equation}
We next estimate the pressure. Recall 
$$
p=\sum_{i,j=1}^d (-\Delta)^{-1}\partial_iu_j\partial_ju_i.
$$
Estimating the homogeneous norm first gives
\begin{align*}
\norm{\nabla p}{\dot{B}^{s,\psi}_{p,q}}&\leq \sum_{i,j=1}^d \norm{\nabla (-\Delta)^{-1}\partial_iu_j\partial_ju_i}{\dot{B}^{s,\psi}_{p,q}}\\
&
\lesssim \sum_{i,j=1}^d \norm{\partial_iu_j\partial_ju_i}{\dot{B}^{s-1,\psi}_{p,q}}\\
&\lesssim \norm{\nabla u}{L^{\infty}}\norm{u}{\dot{B}^{s,\psi}_{p,q}},
\end{align*}
where in the last inequality we used Lemma \ref{Leibniz Rule in Besov Spaces}.
Combining this with the estimate
$$
\norm{\nabla p}{L^p} \lesssim \norm{(u \cdot \nabla) u}{L^p} \lesssim \norm{\nabla u}{L^{\infty}}\norm{u}{L^p},
$$
allows us to conclude 
\begin{equation}
\label{Pressure apriori besov estimate}
\norm{\nabla p}{B^{s,\psi}_{p,q}} \lesssim \norm{\nabla u}{L^{\infty}}\norm{u}{B^{s,\psi}_{p,q}}.
\end{equation}
Substituting \eqref{besov commutator apriori} and \eqref{Pressure apriori besov estimate} into \eqref{gbesov estimate 1} gives
\begin{equation}
\label{gbesov estimate 2}
\norm{u(t)}{B^{s,\psi}_{p,q}}\leq \norm{u_0}{B^{s,\psi}_{p,q}}+C\int_0^t \norm{\nabla u(\tau)}{L^{\infty}}\norm{u(\tau)}{B^{s,\psi}_{p,q}}\,d\tau.
\end{equation}
An application of Grönwall's Lemma yields the desired result.
\end{proof}

\begin{theorem}\label{short time euler besov} Suppose that $u_0 \in B^{s,\psi}_{p,q}(\R^d)$, $d\geq 2$, $s=d/p+1$, $p \in (1,\infty), q\in [1,\infty]$, and $\psi \in \mathcal{M}_{q'}$. Then, there exists a $T=T(\norm{u_0}{B^{s,\psi}_{p,q}})>0$ such that a unique solution
$$
u \in C([0,T]; B^{s,\psi}_{p,q}(\R^d)),
$$
of the system \eqref{euler velocity} exists.
\end{theorem}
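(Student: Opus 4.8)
The plan is to follow the standard Kato-type scheme for quasilinear equations, using the a priori estimate of Theorem \ref{euler a-priori estimate besov} as the engine, and to upgrade weak convergence of an approximating sequence to strong convergence by a low-regularity stability estimate. First I would construct approximate solutions. One clean way is to mollify the initial data and regularize the nonlinearity: set $u_0^n = S_n u_0$ (or $\chi(\cdot/n) * u_0$) and solve the regularized system
\begin{equation*}
\partial_t u^n + \mathbb{P}\,\mathrm{div}(J_n u^n \otimes J_n u^n) = 0, \qquad u^n|_{t=0} = u_0^n,
\end{equation*}
where $\mathbb{P}$ is the Leray projector and $J_n$ is a Friedrichs mollifier; this is a Banach-space ODE with a locally Lipschitz right-hand side, so it has a local smooth solution on a maximal interval $[0,T_n)$. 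Running the a priori estimate of Theorem \ref{euler a-priori estimate besov} on $u^n$ — which applies because the commutator and pressure estimates (Proposition \ref{besov commutator estimate}, Lemma \ref{Leibniz Rule in Besov Spaces}) are insensitive to the harmless mollifiers, and $\|\nabla u^n\|_{L^\infty} \lesssim \|u^n\|_{B^{s,\psi}_{p,q}}$ by Proposition \ref{embedding theorem} together with the embedding $B^{s-d/p}_{\infty,1} \hookrightarrow W^{1,\infty}$ (here is where $\psi \in \mathcal{M}_{q'}$ enters, guaranteeing $\int_1^\infty \frac{dt}{t\psi^{q'}(t)} < \infty$) — gives a differential inequality $\frac{d}{dt}\|u^n\|_{B^{s,\psi}_{p,q}} \lesssim \|u^n\|_{B^{s,\psi}_{p,q}}^2$. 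Hence there is a time $T = T(\|u_0\|_{B^{s,\psi}_{p,q}}) > 0$, independent of $n$, on which all $u^n$ exist and satisfy a uniform bound $\sup_{[0,T]} \|u^n(t)\|_{B^{s,\psi}_{p,q}} \le 2\|u_0\|_{B^{s,\psi}_{p,q}}$; a bootstrap on $T_n$ shows $T_n > T$.

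Next I would extract a limit and identify it as a solution. From the uniform bound and the equation, $\{\partial_t u^n\}$ is uniformly bounded in $C([0,T]; B^{s-1,\psi}_{p,q})$, so by Aubin–Lions (using the local compactness of the embedding $B^{s,\psi}_{p,q} \hookrightarrow B^{s-1}_{p,q}$ on balls and a diagonal argument over an exhaustion of $\R^d$) a subsequence converges strongly in $C([0,T]; B^{s-1}_{p,q,\mathrm{loc}})$ to some $u$, which by the uniform bound and Fatou/weak-$*$ lower semicontinuity lies in $L^\infty([0,T]; B^{s,\psi}_{p,q})$. The strong local convergence is enough to pass to the limit in the quadratic nonlinearity and in the Leray projection, so $u$ solves \eqref{euler velocity} in the sense of Definition \ref{weak solution}, and $\mathrm{div}\, u = 0$ is preserved.

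Then I would prove uniqueness and upgrade to $u \in C([0,T]; B^{s,\psi}_{p,q})$. For uniqueness I would estimate the difference $w = u_1 - u_2$ of two solutions in a \emph{lower} regularity norm — the natural choice is $L^p$ (or $B^0_{p,1}$), where the transport structure $\partial_t w + u_1 \cdot \nabla w = -(w\cdot\nabla)u_2 - \nabla(p_1-p_2)$ closes: composing with the flow of $u_1$ and using $\|\nabla u_2\|_{L^\infty} \lesssim \|u_2\|_{B^{s,\psi}_{p,q}}$ gives $\frac{d}{dt}\|w\|_{L^p} \lesssim \|u_2\|_{B^{s,\psi}_{p,q}} \|w\|_{L^p}$ after handling the pressure term by the Calderón–Zygmund bound $\|\nabla(p_1-p_2)\|_{L^p} \lesssim (\|u_1\|_{W^{1,\infty}} + \|u_2\|_{W^{1,\infty}})\|w\|_{L^p}$, and Grönwall forces $w \equiv 0$. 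The same stability estimate applied to differences $u^n - u^m$ shows $\{u^n\}$ is Cauchy in $C([0,T]; L^p)$; interpolating the uniform $B^{s,\psi}_{p,q}$ bound against this $L^p$ convergence (and using that $\psi$ grows slower than any power, so any genuine gain in $s$ dominates the $\psi$ factor) yields convergence in $C([0,T]; B^{s',\psi'}_{p,q})$ for a slightly smaller index, and then a standard argument — either Bona–Smith (mollify the data, compare the fine solution to the mollified one quantitatively) or the Besov-space continuity criterion — upgrades $L^\infty_t$ to $C_t$ in the sharp space $B^{s,\psi}_{p,q}$. Finally I would note $\partial_t u = -\mathbb{P}\,\mathrm{div}(u\otimes u) \in C([0,T]; B^{s-1,\psi}_{p,q})$ so $u \in C^1$ in time with values in the lower space.

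The main obstacle is the last upgrade, from weak/weak-$*$ time-continuity to strong continuity in the endpoint space $B^{s,\psi}_{p,q}$: this is exactly the borderline difficulty that makes critical spaces delicate, because one cannot afford any loss of derivatives, and the usual trick of absorbing a loss into a power of the frequency is unavailable — one must instead exploit the slowly varying factor $\psi \in \mathcal{M}_{q'}$ carefully, which is presumably why the hypothesis "$\psi$ grows fast enough" is imposed. A secondary technical point is that all the classical paraproduct/commutator/Leibniz estimates invoked above must be re-verified in the generalized-smoothness spaces (as the authors flag, these are relegated to the appendix), and their proofs rely on $\psi$ being non-decreasing and slowly varying so that $\psi(2^{j'}) \sim \psi(2^j)$ when $|j-j'| \le 2$ and $\psi(2^{j'}) \lesssim \psi(2^j)$ when $j' \le j$.
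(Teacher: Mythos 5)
Your proposal follows a genuinely different construction from the paper. You regularize the nonlinearity with a Friedrichs mollifier and the Leray projector to obtain a Banach-space ODE, extract a limit by Aubin--Lions compactness on an exhaustion of $\R^d$, and then separately prove an $L^p$-stability/uniqueness estimate and interpolate up against the uniform $B^{s,\psi}_{p,q}$ bound. The paper instead runs a Picard-type iteration on the \emph{linearized} system $\partial_t u^{(n)} + (u^{(n-1)}\cdot\nabla)u^{(n)} = -\nabla p^{(n-1)}$ with data $S_n u_0$, proves uniform bounds by induction from the a priori estimate, and shows the sequence is Cauchy directly in $C([0,T];B^{s-1,\psi}_{p,q}(\R^d))$ --- losing exactly one derivative but keeping the full $\psi$-weight --- by combining the commutator and Leibniz estimates with a rewriting of $\nabla p^{(n-1)}-\nabla p^{(m-1)}$ as Calder\'on--Zygmund operators acting on $(u^{(n-1)}-u^{(m-1)})\cdot\nabla u^{(n-1)}$ and $(u^{(n-1)}-u^{(m-1)})\cdot\nabla u^{(m-1)}$; uniqueness then falls out of the same difference estimate. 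The paper's route is constructive and avoids any compactness argument on the unbounded domain, and its Cauchy estimate lives in a space one derivative below the target with the $\psi$-factor intact, which is tighter than your $L^p$-Cauchy-plus-interpolation chain. Your route has the advantage that existence of each approximation is immediate from Picard--Lindel\"of, whereas the paper's iteration tacitly relies on solvability of each linear transport problem. You are right to flag the upgrade from $L^\infty_t B^{s,\psi}_{p,q}\cap C_t B^{s-1,\psi}_{p,q}$ to $C_t B^{s,\psi}_{p,q}$ as the genuinely delicate point; in fact the paper's exposition also passes over it quietly, asserting $u\in C([0,T];B^{s,\psi}_{p,q}(\R^d))$ after establishing only the weaker convergence, and making that rigorous would require a Bona--Smith-type argument that neither your sketch nor the paper spells out.
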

\begin{proof}
\textbf{Defining an approximating sequence:} We begin by the defining the sequence $(u^{(n)},p^{(n)})$ as follows. We solve the linearized problem
\begin{equation}
\label{approximating sequence PDE}
\begin{cases}
\partial_t u^{(n)}+(u^{(n-1)}\cdot \nabla)u^{(n)}=-\nabla p^{(n-1)},\\
\text{div } u^{(n)}=0,\\
u^{(n)}\vert_{t=0}=S_nu_0,\\
\end{cases}
\end{equation}
for $n \in \mathbb{N}$ where,
\begin{equation}
\label{pressure approximation}
p^{(n)}=\sum_{i,j=1}^d (-\Delta)^{-1}(\partial_iu_j^{(n)}\partial_ju_i^{(n)}), \hspace{.5cm} n  \in \mathbb{N},
\end{equation}
and $u^{(0)}=p^{(0)}=0$.
\\
\\
\textbf{Uniform bounds:} We let $X^s_T= C([0,T];B^{s,\psi}_{p,q}(\R^d))$ with $\|f\|_{X^s_T}:=\sup_{t\in[0,T]}\|f(t,\cdot)\|_{B^{s,\psi}_{p,q}}$. Using induction we aim to show
\begin{equation}
\label{uniform bound besov}
\norm{u^{(n)}}{X_T^s}\leq 2\norm{u_0}{B^{s,\psi}_{p,q}} \text{ for all } n\geq 0,
\end{equation}
when $T>0$ is sufficiently small. Following the proof of Theorem \ref{euler a-priori estimate besov} and utilizing the continuous embedding $B^{s,\psi}_{p,q}(\R^d)\hookrightarrow B^{1}_{\infty,1}(\R^d)$ for $\psi \in \mathcal{M}_{q'}$ we see the solution $(u^{(n)},p^{(n)})$ to \eqref{approximating sequence PDE} satisfies
\begin{equation}
\label{besov induction eq 1}
\norm{u^{(n)}(t)}{B^{s,\psi}_{p,q}} \leq C\left(\norm{u_0}{B^{s,\psi}_{p,q}} + \int_0^t \left\{ \norm{u^{(n-1)}(\tau)}{B^{s,\psi}_{p,q}}^2 + \norm{u^{(n-1)}(\tau)}{B^{s,\psi}_{p,q}}\norm{u^{(n)}(\tau)}{B^{s,\psi}_{p,q}}\right\}\,d\tau\right).
\end{equation}
The base case $n=0$ follows immediately from the definition of the approximation sequence. We then assume \eqref{uniform bound besov} holds for $n=1,2,\hdots,k-1$. Using \eqref{besov induction eq 1} and the inductive hypothesis we have
\begin{equation}
\label{besov induction eq 2}
\norm{u^{(k)}(t)}{B^{s,\psi}_{p,q}} \leq C\norm{u_0}{B^{s,\psi}_{p,q}}\left(1 + 4T\norm{u_0}{B^{s,\psi}_{p,q}} + 2\int_0^t \norm{u^{(k)}(\tau)}{B^{s,\psi}_{p,q}}\,d\tau\right).
\end{equation}
Then, an application of Grönwall's Lemma to \eqref{besov induction eq 2} gives
$$
\norm{u^{(k)}}{X^s_T} \leq \norm{u_0}{B^{s,\psi}_{p,q}}C\left(1+4T\norm{u_0}{B^{s,\psi}_{p,q}}\right)\exp\left(2T\norm{u_0}{B^{s,\psi}_{p,q}}\right).
$$
We then choose $T\leq T_0$ where
\begin{equation}
\label{Short time T}
T_0= \min \left\{ \frac{5-4C}{16C\norm{u_0}{B^{s,\psi}_{p,q}}}, \frac{\ln \left(5/4\right)}{2\norm{u_0}{B^{s,\psi}_{p,q}}}\right\},
\end{equation}
so that
$$C\left(1+4T\norm{u_0}{B^{s,\psi}_{p,q}}\right)\exp\left(2T\norm{u_0}{B^{s,\psi}_{p,q}}\right)\ \leq \left(\frac{5}{4}\right)^2 \leq 2,
$$
establishing \eqref{uniform bound besov}.\\ \\
\textbf{$u^{(n)}$ is Cauchy in $X^{s-1}_T$:} Without loss of generality, assume that $n \geq m$. We see the difference of $u^{(n)}$ and $u^{(m)}$ must satisfy the following differential equation
\begin{equation}
\label{difference eq 1 besov}
\begin{cases}
\partial_t(u^{(n)}-u^{(m)})+(u^{(n-1)}\cdot\nabla)(u^{(n)}-u^{(m)}) = - ((u^{(n-1)}-u^{(m-1)})\cdot\nabla)u^{(m)} - \nabla (p^{(n-1)}-p^{(m-1)}),\\
\left(u^{(n)}-u^{(m)}\right)\vert_{t=0}=\sum_{j=1}^{n-m}\Delta_{j+m}u_0.
\end{cases}
\end{equation}
In what follows, we let $X_{t,n-1}$ be the corresponding flow map for $u^{(n-1)}$. Applying $\dot{\Delta}_j$ to \eqref{difference eq 1 besov}$_1$ and writing the result in Lagrangian coordinates yields
\begin{multline}
\label{difference eq 2 besov}
\frac{d}{dt}\left( \dot{\Delta}_j(u^{(n)}-u^{(m)})(t,X_{t,n-1}(x))\right) = -\dot{\Delta}_j\left(((u^{(n-1)}-u^{(m-1)})\cdot\nabla)u^{(m)}\right)(t,X_{t,n-1}(x))\\ - \dot{\Delta}_j\nabla(p^{(n-1)}-p^{(m-1)})(t,X_{t,n-1}(x)) + \tilde{\Delta}_j\left(\left([u^{(n-1)}\cdot\nabla,\dot{\Delta}_j](u^{(n)}-u^{(m)})\right)(t,X_{t,n-1}(x))\right).
\end{multline}
We integrate \eqref{difference eq 2 besov} in time, take the $L^p$ norm on both sides, and use that $X_{t,n-1}$ is measure preserving to obtain
\begin{multline}
\label{difference eq 3 besov}
\norm{\dot{\Delta}_j(u^{(n)}-u^{(m)})(t)}{L^p}\leq \norm{\dot{\Delta}_j\left(\sum_{j=1}^{n-m} \Delta_{j+m}u_0\right)}{L^p} + \int_0^t\norm{\dot{\Delta}_j(((u^{(n-1)}-u^{(m-1)})\cdot \nabla)u^{(m)})(\tau)}{L^p}\,d\tau\\
+\int_0^t \norm{\dot{\Delta}_j\nabla(p^{(n-1)}-p^{(m-1)})(\tau)}{L^p} + \int_0^t \norm{\tilde{\Delta}_j\left([u^{(n-1)}\cdot\nabla,\dot{\Delta}_j](u^{(n)}-u^{(m)})\right)(\tau)}{L^p}\,d\tau.
\end{multline}
We multiply both sides of \eqref{difference eq 3 besov} by $2^{j(s-1)}\psi(2^j)$, take the $l^q(\mathbb{Z})$ norm, and use Minkowski's inequality to obtain
\begin{multline}
\label{difference eq 4 besov}
\norm{(u^{(n)}-u^{(m)})(t)}{\dot{B}^{s-1,\psi}_{p,q}} \leq \norm{\sum_{j=1}^{n-m}\Delta_{j+m}u_0}{\dot{B}^{s-1,\psi}_{p,q}} + \int_0^t \norm{((u^{(n-1)}-u^{(m-1)})\cdot \nabla)u^{(m)}(\tau)}{\dot{B}^{s-1,\psi}_{p,q}}\,d\tau\\
 + \int_0^t \norm{2^{j(s-1)}\psi(2^j)\norm{\tilde{\Delta}_j\left([u^{(n-1)}\cdot\nabla,\dot{\Delta}_j](u^{(n)}-u^{(m)})\right)(\tau)}{L^p}}{l^q(\mathbb{Z})}\,d\tau\\
+\int_0^t \norm{\nabla(p^{(n-1)}-p^{(m-1)})(\tau)}{\dot{B}^{s-1,\psi}_{p,q}}\,d\tau
\eqdef I + II + III + IV.
\end{multline}
\textbf{Estimate of term $I$:} We have 
\begin{align*}
I =& \norm{\sum_{j=1}^{n-m}\Delta_{j+m}u_0}{\dot{B}^{s-1,\psi}_{p,q}}\leq \sum_{j=1}^{n-m} \norm{\Delta_{j+m}u_0}{B^{s-1,\psi}_{p,q}}\\
&\lesssim \sum_{j=1}^{n-m} 2^{-(j+m)}\norm{\Delta_{j+m}u_0}{B^{s,\psi}_{p,q}} \lesssim 2^{-m}\norm{u_0}{B^{s,\psi}_{p,q}}\sum_{j=1}^{n-m}2^{-j}\\ &\leq 2^{-m}\norm{u_0}{B^{s,\psi}_{p,q}}.
\end{align*}
\textbf{Estimate of term $II$:} By Proposition \ref{Leibniz Rule in Besov Spaces} and Proposition \ref{embedding theorem}, we have
\begin{align*}
II &= \int_0^t \norm{((u^{(n-1)}-u^{(m-1)})\cdot \nabla)u^{(m)}(\tau)}{\dot{B}^{s-1,\psi}_{p,q}}d \tau\\ &\lesssim \int_0^t \norm{(u^{(n-1)}-u^{(m-1)})(\tau)}{\dot{B}^{s-1,\psi}_{p,q}} \norm{\nabla u^{(m)}(\tau)}{L^{\infty}}\,d\tau \\
& \hspace{1cm}+ \int_0^t \norm{\nabla u^{(m)}(\tau)}{\dot{B}^{s-1,\psi}_{p,q}}\norm{(u^{(n-1)} -u^{(m-1)})(\tau)}{L^{\infty}}\,d\tau\\
&\lesssim \int_0^t \norm{u^{(m)}(\tau)}{B^{s,\psi}_{p,q}}\norm{(u^{(n-1)}-u^{(m-1)})(\tau)}{B^{s-1,\psi}_{p,q}}\,d\tau.
\end{align*}
\textbf{Estimate of term $III$:} By Proposition \ref{besov commutator estimate} and Proposition \ref{embedding theorem}, we have 
\begin{align*}
III &= \int_0^t \norm{2^{j(s-1)}\psi(2^j)\norm{\tilde{\Delta}_j\left([u^{(n-1)}\cdot\nabla,\dot{\Delta}_j](u^{(n)}-u^{(m)})\right)(\tau)}{L^p}}{l^q(\mathbb{Z})}\,d\tau\\
&\lesssim
\int_0^t \norm{\nabla u^{(n-1)}(\tau)}{L^{\infty}}\norm{(u^{(n)}-u^{(m)})(\tau)}{\dot{B}^{s-1,\psi}_{p,q}} \,d\tau \\
&\hspace{1cm}+ \int_0^t \norm{ (u^{(n)}-u^{(m)})(\tau)}{L^{\infty}}\norm{\nabla u^{(n-1)}(\tau)}{\dot{B}^{s-1,\psi}_{p,q}}\,d\tau\\
&\lesssim \int_0^t \norm{u^{(n-1)}(\tau)}{B^{s,\psi}_{p,q}}\norm{(u^{(n)}-u^{(m)})(\tau)}{B^{s-1,\psi}_{p,q}}\,d\tau.
\end{align*}
\textbf{Estimate of $IV$:} We first come up with an useful expression for $\nabla p^{(n-1)}-\nabla p^{(m-1)}$. By \eqref{pressure approximation}, we see that
\begin{align*}
&\nabla p^{(n-1)}-\nabla p^{(m-1)} 
= 
\sum_{i,j=1}^d (-\Delta)^{-1}\nabla (\partial_iu_j^{(n-1)}\partial_j u_i^{(n-1)} - \partial_i u_j^{(m-1)}\partial_ju_i^{(m-1)})\\
&=
\sum_{i,j=1}^d (-\Delta)^{-1}\nabla\partial_i\left(u_j^{(n-1)}\partial_ju_i^{(n-1)}- u_j^{(m-1)}\partial_ju^{(m-1)}\right) \\
&\hspace{.5cm} - \sum_{i,j=1}^d (-\Delta)^{-1}\nabla\left(u_j^{(n-1)}\partial_i\partial_ju_i^{(n-1)}-u_j^{(m-1)}\partial_i\partial_ju_i^{(m-1)}\right).
\end{align*}
Note that since $u^{(k)}$ is divergence-free for all $k \in \mathbb{N}$ one has
\begin{align*}
&\sum_{i,j=1}^d (-\Delta)^{-1}\nabla\left(u_j^{(n-1)}\partial_i\partial_ju_i^{(n-1)}-u_j^{(m-1)}\partial_i\partial_ju_i^{(m-1)}\right) 
\\
&=
\sum_{j=1}^d (-\Delta)^{-1}\nabla\partial_j\left\{ u_j^{(n-1)}\sum_{i=1}^d \partial_iu_i^{(n-1)} - u_j^{(m-1)} \sum_{i=1}^d \partial_i u_i^{(m-1)} \right\}=0.
\end{align*}
We then continue rewriting the difference of the pressure terms as 
\begin{align*}
&\nabla p^{(n-1)}-\nabla p^{(m-1)} 
= 
\sum_{i,j=1}^d (-\Delta)^{-1}\nabla\partial_i\left(u_j^{(n-1)}\partial_ju_i^{(n-1)}- u_j^{(m-1)}\partial_ju^{(m-1)}\right) \\
&=
\sum_{i,j=1}^d(-\Delta)^{-1}\nabla \partial_i\left(u_j^{(n-1)}\partial_ju_i^{(n-1)}-u_j^{(n-1)}\partial_ju_i^{(m-1)}+u_j^{(n-1)}\partial_ju_i^{(m-1)}- u_j^{(m-1)}\partial_ju_i^{(m-1)}\right)\\
&=
\sum_{i,j=1}^d(-\Delta)^{-1}\nabla \partial_i\left(u_j^{(n-1)}\partial_j\left(u_i^{(n-1)}-u_i^{(m-1)}\right)+\partial_ju_i^{(m-1)}\left(u_j^{(n-1)}- u_j^{(m-1)}\right)\right)\\
&= 
\sum_{i,j=1}^d (-\Delta)^{-1}\nabla \partial_j\left\{\left(\partial_iu_j^{(n-1)}\right)\left(u_i^{(n-1)}-u_i^{(m-1)}\right)\right\}\\ 
&\hspace{1cm}+ \sum_{i,j=1}^d (-\Delta)^{-1}\nabla \partial_i \left\{\left(\partial_ju_i^{(m-1)}\right)\left(u_j^{(n-1)}-u_j^{(m-1)}\right)\right\},
\end{align*}
where in the last equality we used the incompressibility condition of the velocity field. Since $(-\Delta)^{-1}\nabla\partial_j$ and $(-\Delta)^{-1}\nabla\partial_i$ are Calder\'{o}n-Zygmund operators we have
\begin{align*}
&\int_0^t \norm{\nabla(p^{(n-1)}-p^{(m-1)})(\tau)}{\dot{B}^{s-1,\psi}_{p,q}}\,d\tau \lesssim \int_0^t \norm{((u^{(n-1)}-u^{(m-1)})\cdot\nabla)u^{(n-1)}(\tau)}{\dot{B}^{s-1,\psi}_{p,q}}\,d\tau\\ &\hspace{5cm}+ \int_0^t \norm{((u^{(n-1)}-u^{(m-1)})\cdot \nabla) u^{(m-1)}(\tau)}{\dot{B}^{s-1,\psi}_{p,q}}\,d\tau\\
&\lesssim \int_0^t \left( \norm{u^{(n-1)}(\tau)}{B^{s,\psi}_{p,q}} + \norm{u^{(m-1)}(\tau)}{B^{s,\psi}_{p,q}}\right)\norm{(u^{(n-1)}-u^{(m-1)})(\tau)}{\dot{B}^{s-1,\psi}_{p,q}}\,d\tau,
\end{align*}
where in the last inequality we applied Proposition \ref{Leibniz Rule in Besov Spaces}.
\\
\\
Combining the estimates for $I,II,III,$ and $IV$ we have 
\begin{multline}
\label{short time cauchy homogeneuous besov}
\norm{(u^{(n)}-u^{(m)})(t)}{\dot{B}^{s-1,\psi}_{p,q}} \lesssim 2^{-m}\norm{u_0}{B^{s,\psi}_{p,q}}  +\int_0^t \norm{u^{(n-1)}(\tau)}{B^{s,\psi}_{p,q}}\norm{(u^{(n)}-u^{(m)})(\tau)}{B^{s-1,\psi}_{p,q}}\,d\tau\\
+\int_0^t \left(\norm{u^{(m)}(\tau)}{B^{s,\psi}_{p,q}} + \norm{u^{(n-1)}(\tau)}{B^{s,\psi}_{p,q}} + \norm{u^{(m-1)}(\tau)}{B^{s,\psi}_{p,q}}\right)\norm{(u^{(n-1)}-u^{(m-1)})(\tau)}{B^{s-1,\psi}_{p,q}}\,d\tau.\\
\end{multline}
We next derive the non-homogeneous estimate. Integrating \eqref{difference eq 1 besov} in time and taking the $L^p(\R^d)$ norm leads to 
\begin{multline}
\label{short time cauchy nonhomogenuous besov}
\norm{(u^{(n)}-u^{(m)})(t)}{L^p} \lesssim 2^{-m}\norm{\nabla u_0}{L^p}\\ + \int_0^t \left\{ \norm{((u^{(n-1)}-u^{(m-1)})\cdot \nabla) u^{(m)}(\tau)}{L^p} + \norm{\nabla(p^{(n-1)}-p^{(m-1)})(\tau)}{L^p}\right\}\,d\tau\\
\lesssim  2^{-m}\norm{\nabla u_0}{L^p} + \int_0^t \norm{(u^{(n-1)}-u^{(m-1)})(\tau)}{L^p}\norm{\nabla u^{(m)}(\tau)}{L^{\infty}}\,d\tau\\ + \int_0^t \left(\norm{\nabla u^{(n-1)}(\tau)}{L^{\infty}}+\norm{\nabla u^{(m-1)}(\tau)}{L^{\infty}}\right)\norm{(u^{(n-1)}-u^{(m-1)})(\tau)}{L^p}\,d\tau.
\end{multline}
Adding \eqref{short time cauchy nonhomogenuous besov} to \eqref{short time cauchy homogeneuous besov} we find that there exists some constant $C_1=C_1(\norm{u_0}{B^{s,\psi}_{p,q}})$ such that 
$$
\norm{u^{(n)}-u^{(m)}}{X_T^{s-1}} \leq C_12^{-m} + C_1T\norm{u^{(n-1)}-u^{(m-1)}}{X^{s-1}_T} + C_1T\norm{u^{(n)}-u^{(m)}}{X_T^{s-1}}.
$$
Choosing $T\leq \min\left\{T_0, \frac{1}{2C_1}\right\}$ we find 
$$
\norm{u^{(n)}-u^{(m)}}{X^{s-1}_T}\leq C_12^{-m+1} +  2C_1T\norm{u^{(n-1)}-u^{(m-1)}}{X^{s-1}_T}.
$$
Iterating this inequality one obtains
\begin{equation}
\label{besov cauchy inequality 1}
\norm{u^{(n)}-u^{(m)}}{X^{s-1}_T} \leq C_1\left(\sum_{k=0}^m 2^{-(m-k)+1}(2C_1T)^k\right) + C(2C_1T)^m\norm{u_0}{X^{s-1}_T}.
\end{equation}
For the sum in \eqref{besov cauchy inequality 1} we see
$$
\sum_{k=0}^m 2^{-(m-k)+1}(2C_1T)^k=2^{1-m}\sum_{k=0}^m 2^{2k}(C_1T)^k=2^{1-m}\sum_{k=0}^{m}(4C_1T)^k.
$$
Thus, we set $T \leq \min\{T_0,\frac{1}{4C_1}\}$ and conclude that $u^{(n)}$ is a Cauchy sequence in $X^{s-1}_T$. So there exists some $u \in X^{s-1}_T$ such that $u^{(n)} \to u$ in $X^{s-1}_T$.
\\
\\
\textbf{$u$ satisfies (\ref{euler velocity})}
We show now that the limit $u$ is a weak solution to \eqref{euler velocity} in the sense of definition \ref{weak solution}. Since each $u^{(n)}$ is a classical solution to \eqref{approximating sequence PDE} we see 
\begin{equation}
\label{weak solution eq 1}
\int_0^T \int_{\R^d}\left\{ u^{(n)}\partial_t\phi-u_k^{(n)}u_{l}^{(n-1)}\partial_{x_l}\phi_k\right\}\,dx\,dt = \int_{\R^d} \left\{u^{(n)}(T)\cdot\phi(T)-S_nu_0\cdot\phi(0)\right\}\,dx
\end{equation}
for all $\phi \in C_0^{\infty}([0,T]; \R^d)$. Since $u_0 \in B^{s,\psi}_{p,q}(\R^d)\hookrightarrow B_{\infty,1}^1(\R^d)$ we see that $S_nu_0 \to u_0$ in $L^{\infty}(\R^d)$. Then by the dominated convergence theorem 
\begin{equation}
\label{weak soln 1}
\lim_{n \to \infty} \int_{\R^d} S_nu_0 \cdot \phi(0)\,dx = \int_{\R^d} u_0 \cdot \phi(0)\,dx.
\end{equation}
By the uniform bound \eqref{uniform bound besov} and uniqueness of weak limits, we see that $u_n \overset{\ast}{\rightharpoonup} u$ in $C([0,T];B^{s,\psi}_{p,q}(\R^d))$. It immediately follows that
\begin{equation}
\label{weak soln 2}
\lim_{n \to \infty} \int_0^T \int_{\R^d} u^{(n)}\partial_t\phi\, dx\,dt = \int_0^T\int_{\R^d} u\partial_t\phi\, dx\,dt \text{ and }\quad \lim_{n\to\infty}\int_{\mathbb{R}^d}u^{(n)}(T)\phi(T)\,dx=\int_{\mathbb{R}^d}u(T)\phi(T)\,dx.
\end{equation}
We now estimate the nonlinear term as
\begin{align*}
&\left| \int_0^T \int_{\R^d}u_k^{(n)}u_l^{(n-1)}\partial_{x_l}\phi_k \,dx\,dt - \int_0^T\int_{\R^d}u_ku_l\partial_{x_l}\phi_l \,dx\, dt \right|
\\
& \leq \int_0^T \int_{\R^d} |u_k^{(n)}(u_l^{(n-1)}-u_l)\partial_{x_l}\phi_k|\,dx\,dt + \int_0^T\int_{\R^d}|(u_k^{(n)}-u_k)u_l\partial_{x_l}\phi_k|\,dx\,dt\\
&\leq C\norm{u_0}{B^{s,\psi}_{p,q}}\int_0^T\int_{\R^d} | \partial_{x_l}\phi_k|\,dx\,dt \left( \norm{u_l^{(n-1)}-u_l}{X^{s-1}_T} + \norm{u_k^{(n)}-u_k}{X^{s-1}_T}\right).
\end{align*}
Since $u^{(n)}$ is Cauchy in $X^{s-1}_T$, it follows that
\begin{equation}
\label{weak soln 3}
\lim_{n \to \infty} \int_0^T \int_{\R^d}u_k^{(n)}u_l^{(n-1)}\partial_{x_l}\phi_k \,dx\,dt = \int_0^T\int_{\R^d}u_ku_l\partial_{x_l}\phi_l \,dx \,dt.
\end{equation}
By \eqref{weak soln 1}, \eqref{weak soln 2}, and \eqref{weak soln 3} we see that the limit $u \in C([0,T];B^{s,\psi}_{p,q}(\R^d))$ is a weak solution to \eqref{euler velocity}. By the embedding $B^{s,\psi}_{p,q}(\R^d) \hookrightarrow B^1_{\infty,1}(\R^d)$, we see that $u \in C([0,T];C^1(\R^d))$. Since $u$ is sufficiently regular, it follows that $u$ is a classical solution. The argument for uniqueness is identical to showing $u^{(n)}$ is Cauchy in $X^{s-1}_T$, so we omit the proof.
\end{proof}

\subsection{Global Existence }
In this section, we prove a BKM-type inequality that allows us to extend the lifetime of our solution under a certain condition. Following this, we utilize the transport structure of the $2$D vorticity equation and an estimate from \cite{MV98} to conclude global persistence of the $B^{s,\psi}_{p,q}(\R^2)$ norm.
\begin{theorem}(Blow-up Criterion)
\label{BKM-type inequality Besov Space} Let $p \in (1,\infty), q \in [1,\infty], s=d/p+1,$ and $\varphi \in \mathcal{M}_{q'}$. Then, the local in time solution $u \in C([0,T];B^{s,\psi}_{p,q}(\R^d))$ blows up at $T^*>T$ in $B^{s,\psi}_{p,q}(\R^d)$, namely
\begin{equation}
\label{blow up besov}
\limsup_{t \to T^*}\norm{u(t)}{B^{s,\psi}_{p,q}}=\infty
\end{equation}
if and only if
\begin{equation}
\label{besov BKM criterion}
\int_0^{T^*}\norm{\omega(t)}{\dot{B}^{0}_{\infty},1}\,dt=\infty.
\end{equation}
\end{theorem}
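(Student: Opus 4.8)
The plan is to follow the classical BKM strategy adapted to the generalized Besov scale. The "only if" direction is immediate: if $\int_0^{T^*}\norm{\omega(t)}{\dot B^0_{\infty,1}}\,dt<\infty$, then in particular $\norm{\nabla u(t)}{L^\infty}$ is integrable on $[0,T^*)$ — here one uses the logarithmic interpolation/Biot–Savart estimate $\norm{\nabla u}{L^\infty}\lesssim 1+\norm{\omega}{\dot B^0_{\infty,1}}$ (a standard consequence of the boundedness of Calderón–Zygmund operators on $\dot B^0_{\infty,1}$ together with $u=\mathrm{div}((-\Delta)^{-1}\omega)$) — and then the a priori estimate of Theorem \ref{euler a-priori estimate besov} gives $\norm{u(t)}{B^{s,\psi}_{p,q}}\leq \norm{u_0}{B^{s,\psi}_{p,q}}\exp(C\int_0^t\norm{\nabla u}{L^\infty})$, which stays bounded up to $T^*$; combined with the short-time existence Theorem \ref{short time euler besov} applied at a time close to $T^*$ this contradicts blow-up. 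So the content is in the "if" direction, equivalently: if $\norm{u(t)}{B^{s,\psi}_{p,q}}$ stays bounded on $[0,T^*)$, does the solution extend past $T^*$? But actually the statement I must prove is the contrapositive of that: if the solution blows up at $T^*$ then \eqref{besov BKM criterion} holds. So I assume $\int_0^{T^*}\norm{\omega(t)}{\dot B^0_{\infty,1}}\,dt<\infty$ and must show $\limsup_{t\to T^*}\norm{u(t)}{B^{s,\psi}_{p,q}}<\infty$, which is exactly the argument just sketched, and then re-derive that the solution continues — so in fact both directions reduce to the same two ingredients.

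Concretely, the key steps, in order: (1) establish the logarithmic Biot–Savart bound $\norm{\nabla u(t)}{L^\infty}\lesssim 1+\norm{\omega(t)}{\dot B^0_{\infty,1}}$, using that $\nabla u$ is obtained from $\omega$ by a zeroth-order Fourier multiplier bounded on $\dot B^0_{\infty,1}$ (paraproduct decomposition and Bernstein) plus control of the low-frequency piece by, say, $\norm{u}{L^p}$ which is transported/conserved in a suitable norm — here I must be careful that the homogeneous $\dot B^0_{\infty,1}$ norm of $\omega$ really does control the top-order part of $\nabla u$ and that the constant does not hide a logarithmic divergence; (2) feed this into the a priori estimate \eqref{gbesov estimate 2} rewritten in terms of $\omega$, obtaining $\norm{u(t)}{B^{s,\psi}_{p,q}}\leq \norm{u_0}{B^{s,\psi}_{p,q}}\exp(C\int_0^t(1+\norm{\omega(\tau)}{\dot B^0_{\infty,1}})\,d\tau)$; (3) under the hypothesis \eqref{besov BKM criterion} fails, i.e. the time integral is finite, conclude $\sup_{t<T^*}\norm{u(t)}{B^{s,\psi}_{p,q}}=:M<\infty$; (4) apply Theorem \ref{short time euler besov} with initial data $u(t_0)$ for $t_0$ close to $T^*$ to get a solution on $[t_0,t_0+T(M)]$ with $T(M)$ depending only on $M$, hence on a time interval reaching past $T^*$, which by uniqueness extends $u$ and contradicts $\limsup_{t\to T^*}\norm{u(t)}{B^{s,\psi}_{p,q}}=\infty$. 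This proves the "if" direction; the "only if" direction is steps (1)–(2) run in the other logical order as noted above.

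I expect the main obstacle to be step (1), the refined Biot–Savart / commutator bookkeeping that replaces $\norm{\nabla u}{L^\infty}$ by $\norm{\omega}{\dot B^0_{\infty,1}}$ throughout the a priori estimate. In the classical setting this is where the BKM criterion with the sharp $L^1_t \dot B^0_{\infty,1}$ endpoint (rather than $L^1_t L^\infty$) comes from, and it requires re-examining the commutator estimate (Proposition \ref{besov commutator estimate}) and the pressure estimate \eqref{Pressure apriori besov estimate} to see that every occurrence of $\norm{\nabla u}{L^\infty}$ can be upgraded — the place to be careful is that one genuinely needs $\norm{\nabla u}{L^\infty}\lesssim 1 + \norm{u}{L^p} + \norm{\omega}{\dot B^0_{\infty,1}}\log\!\big(e+\norm{u}{B^{s,\psi}_{p,q}}\big)$-type logarithmic estimates to be avoidable, which works precisely because in our scaling-critical regime the generalized smoothness gives $B^{s,\psi}_{p,q}(\R^d)\hookrightarrow B^1_{\infty,1}(\R^d)$, so $\nabla u\in L^\infty$ with norm controlled without a logarithm — this embedding (valid since $\psi\in\mathcal M_{q'}$, via Proposition \ref{embedding theorem}) is what makes the endpoint BKM statement clean here. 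The remaining steps are routine given the machinery already developed in Theorems \ref{euler a-priori estimate besov} and \ref{short time euler besov}.
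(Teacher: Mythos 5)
Your hard-direction argument — feed $\norm{\nabla u}{L^\infty}\lesssim\norm{\omega}{L^p}+\norm{\omega}{\dot B^0_{\infty,1}}$ (Biot--Savart plus $B^0_{\infty,1}\hookrightarrow L^\infty$) into the a priori estimate of Theorem~\ref{euler a-priori estimate besov} to conclude boundedness when the time integral is finite — is exactly what the paper does, and your observation that no logarithmic estimate is needed because the commutator and pressure bounds in Theorem~\ref{euler a-priori estimate besov} are already closed in $\norm{\nabla u}{L^\infty}$ is correct. Your step~(4), re-running short-time existence to extend past $T^*$, is harmless but unnecessary for the conclusion $\limsup_{t\to T^*}\norm{u(t)}{B^{s,\psi}_{p,q}}<\infty$, which follows directly from the exponential bound.

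The genuine gap is in the other direction of the equivalence, which you do not actually prove. You have swapped the labels (the argument you sketch proves ``blow-up $\Rightarrow$ criterion,'' i.e.\ the \emph{only if} direction, via its contrapositive) and then claim the remaining direction is ``steps (1)--(2) run in the other logical order,'' which does not produce the needed inequality. To prove ``criterion $\Rightarrow$ blow-up'' (equivalently: if $\norm{u(t)}{B^{s,\psi}_{p,q}}$ stays bounded on $[0,T^*]$ then $\int_0^{T^*}\norm{\omega(t)}{\dot B^0_{\infty,1}}\,dt<\infty$), one needs the \emph{reverse} comparison $\norm{\omega}{\dot B^0_{\infty,1}}\lesssim\norm{u}{B^{s,\psi}_{p,q}}$, which is not a Biot--Savart estimate but an embedding: from $B^{s,\psi}_{p,q}(\R^d)\hookrightarrow B^1_{\infty,1}(\R^d)$ (Proposition~\ref{embedding theorem}, using $\psi\in\mathcal M_{q'}$ and $s-d/p=1$) one gets $\norm{\nabla u}{B^0_{\infty,1}}\lesssim\norm{u}{B^{s,\psi}_{p,q}}$, hence $\norm{\omega}{\dot B^0_{\infty,1}}\lesssim\norm{u}{B^{s,\psi}_{p,q}}$, and then
$\int_0^{T^*}\norm{\omega(t)}{\dot B^0_{\infty,1}}\,dt\leq T^*\sup_{t\leq T^*}\norm{\omega(t)}{\dot B^0_{\infty,1}}\lesssim T^*\sup_{t\leq T^*}\norm{u(t)}{B^{s,\psi}_{p,q}}<\infty$.
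You cite the needed embedding elsewhere in your discussion, so the ingredient is in your toolbox, but the direction that uses it is missing from your proof; as written, only half of the ``if and only if'' is established.
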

\begin{proof}
From Theorem \ref{euler a-priori estimate besov} we have
$$
\norm{u(t)}{B^{s,\psi}_{p,q}}\leq \norm{u_0}{B^{s,\psi}_{p,q}}\exp\left(C\int_0^t \norm{\nabla u(\tau)}{L^{\infty}}\,d\tau\right).
$$
Then, making use of the embedding $B_{\infty,1}^0(\mathbb{R}^d) \hookrightarrow L^{\infty}(\mathbb{R}^d)$ and the Biot-Savart law we see 
\begin{align*}
\norm{\nabla u}{L^{\infty}}&\leq\norm{\nabla \text{div}((-\Delta)^{-1}\omega)}{B^0_{\infty,1}}\\
&\leq \norm{\Delta_{-1}\nabla \text{div}((-\Delta)^{-1}\omega)}{L^{\infty}}+\sum_{j \geq 0}\norm{\Delta_j\nabla\text{div}((-\Delta)^{-1}\omega)}{L^{\infty}}\\
&\leq C(\norm{\omega_0}{L^p}+\norm{\omega}{\dot{B}^0_{\infty,1}}).
\end{align*}
It immediately follows that 
$$
\norm{u(t)}{B^{s,\psi}_{p,q}} \leq \norm{u_0}{B^{s,\psi}_{p,q}}e^{C\norm{\omega_0}{L^p}t}\exp{\left(C\int_0^t \norm{\omega(\tau)}{\dot{B}^0_{\infty,1}} \,d\tau\right)},
$$
which establishes the ``only if" part of the theorem. To see the ``if" part of the theorem note 
$$
\int_0^T \norm{\omega(\tau)}{\dot{B}^0_{\infty,1}}\,d\tau \leq T\sup_{0\leq t \leq T}\norm{\omega(t)}{\dot{B}^0_{\infty,1}} \leq CT\sup_{0 \leq t \leq T}\norm{u(t)}{B^{s,\psi}_{p,q}}
$$
which completes the proof of the theorem.
\end{proof}

\begin{corollary}
\label{global well posedness in Besov Spaces}
Let $p \in (1,\infty), q \in [1,\infty], \psi \in \mathcal{M}_{q'}$, and $s=2/p+1$. Given $u_0 \in B^{s,\psi}_{p,q}(\R^2)$ there is a unique solution to \eqref{euler velocity} with $u \in C([0,\infty);B^{s,\psi}_{p,q}(\R^2))$.
\end{corollary}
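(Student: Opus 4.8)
The plan is to combine the local existence result (Theorem \ref{short time euler besov}), the blow-up criterion (Theorem \ref{BKM-type inequality Besov Space}), and the transport structure of the two-dimensional vorticity equation \eqref{euler 2d vorticity} to rule out finite-time blow-up. By Theorem \ref{short time euler besov} there is a maximal time of existence $T^* \in (0,\infty]$ for the solution $u \in C([0,T^*);B^{s,\psi}_{p,q}(\R^2))$, and by Theorem \ref{BKM-type inequality Besov Space} it suffices to show that
\[
\int_0^{T}\norm{\omega(t)}{\dot B^0_{\infty,1}}\,dt<\infty\qquad\text{for every }T<\infty.
\]
Since $\dot B^0_{\infty,1}(\R^2)\hookrightarrow L^\infty(\R^2)$ and conversely control of $\omega$ in $\dot B^0_{\infty,1}$ will follow from a logarithmic interpolation estimate, the heart of the matter is an a priori bound on $\norm{\omega(t)}{L^\infty}$ that grows at most like a double exponential in $t$.

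First I would exploit the fact that in two dimensions the scalar vorticity $\omega$ solves the pure transport equation $\partial_t\omega+u\cdot\nabla\omega=0$, so that $\omega(t,x)=\omega_0(X_t^{-1}(x))$ along the (volume-preserving) flow \eqref{Flow Map ODE}. Consequently $\norm{\omega(t)}{L^p}=\norm{\omega_0}{L^p}$ for every $p\in[1,\infty]$, and in particular $\norm{\omega(t)}{L^\infty}=\norm{\omega_0}{L^\infty}$ and $\norm{\omega(t)}{L^2}=\norm{\omega_0}{L^2}$ are conserved. Here one uses that $u_0\in B^{s,\psi}_{p,q}(\R^2)\hookrightarrow B^1_{\infty,1}(\R^2)$ (the embedding from Proposition \ref{embedding theorem}, valid since $\psi\in\mathcal M_{q'}$), which gives $\omega_0=\partial_1u_2-\partial_2u_1\in B^0_{\infty,1}\hookrightarrow L^\infty$, and also $\omega_0\in L^p$ since $\nabla u_0\in L^p$. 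Next I would estimate $\norm{\omega(t)}{\dot B^0_{\infty,1}}$ in terms of $\norm{\omega(t)}{L^\infty}$ and a higher Besov norm: using the standard logarithmic interpolation inequality (which follows from splitting the Littlewood–Paley sum and using Bernstein's Lemma), one has, for instance,
\[
\norm{\omega(t)}{\dot B^0_{\infty,1}}\lesssim \norm{\omega(t)}{L^\infty}\Bigl(1+\log\bigl(e+\norm{u(t)}{B^{s,\psi}_{p,q}}\bigr)\Bigr),
\]
since the number of "low" frequency blocks can be summed directly while the "high" blocks are controlled by a single higher norm with a logarithmic loss; the conservation $\norm{\omega(t)}{L^\infty}=\norm{\omega_0}{L^\infty}$ makes the prefactor a constant.

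Finally I would close the argument by feeding this into the a priori estimate of Theorem \ref{euler a-priori estimate besov}, in the form
\[
\norm{u(t)}{B^{s,\psi}_{p,q}}\le \norm{u_0}{B^{s,\psi}_{p,q}}\exp\!\Bigl(C\int_0^t\norm{\nabla u(\tau)}{L^\infty}\,d\tau\Bigr),
\]
together with the bound $\norm{\nabla u(\tau)}{L^\infty}\lesssim \norm{\omega_0}{L^p}+\norm{\omega(\tau)}{\dot B^0_{\infty,1}}$ already established in the proof of Theorem \ref{BKM-type inequality Besov Space}. Setting $Y(t):=e+\norm{u(t)}{B^{s,\psi}_{p,q}}$ and combining the displays yields a differential inequality of the form $Y'(t)\lesssim Y(t)\log Y(t)$ (up to additive constants depending on $\norm{\omega_0}{L^p}$ and $\norm{\omega_0}{L^\infty}$), whose solution satisfies the double-exponential bound $Y(t)\le C_1\exp(e^{C_2 t})$ for all $t<\infty$. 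Hence $\norm{u(t)}{B^{s,\psi}_{p,q}}$ cannot blow up in finite time, so $T^*=\infty$, and the local solution extends to a global one $u\in C([0,\infty);B^{s,\psi}_{p,q}(\R^2))$; uniqueness is inherited from the local statement on each interval $[0,T]$. The main obstacle is establishing the logarithmic interpolation inequality with the correct higher norm on the right-hand side — one needs to verify that it holds with the generalized-smoothness norm $B^{s,\psi}_{p,q}$ (or an intermediate classical Besov norm embedded in it) rather than a classical $B^s_{p,q}$ norm, which requires care since $\psi$ contributes only a slowly varying, sub-logarithmic factor; but because we only lose a logarithm, any norm dominating $B^{1+\delta}_{\infty,1}$ for some $\delta>0$, and in particular $B^{s,\psi}_{p,q}$ via Proposition \ref{embedding theorem}, suffices.
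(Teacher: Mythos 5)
Your high-level strategy (local existence $+$ BKM-type blow-up criterion $+$ 2D transport structure $\Rightarrow$ global existence) matches the paper's, and your observation that $\norm{\omega(t)}{L^\infty}=\norm{\omega_0}{L^\infty}$ is conserved is correct. However, there is a genuine gap in the central step: the logarithmic interpolation inequality
\[
\norm{\omega(t)}{\dot B^0_{\infty,1}}\lesssim \norm{\omega(t)}{L^\infty}\bigl(1+\log(e+\norm{u(t)}{B^{s,\psi}_{p,q}})\bigr)
\]
does not hold here, and the claim at the end that $B^{s,\psi}_{p,q}(\R^2)$ dominates $B^{1+\delta}_{\infty,1}(\R^2)$ for some $\delta>0$ is false. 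Proposition \ref{embedding theorem} only gives $B^{s,\psi}_{p,q}(\R^2)\hookrightarrow B^1_{\infty,1}(\R^2)$ when $s=2/p+1$; because $\psi$ is slowly varying (Proposition \ref{Karamata Representation} shows it is dominated by every power $t^\beta$), there is no embedding into $B^{1+\delta}_{\infty,1}$ for any $\delta>0$. Concretely, what the generalized space buys is only that $\omega\in B^{0,\psi}_{\infty,q}$, so the high-frequency tail $\sum_{j>N}\norm{\Delta_j\omega}{L^\infty}$ is controlled by $\bigl(\sum_{j>N}\psi(2^j)^{-q'}\bigr)^{1/q'}\norm{\omega}{B^{0,\psi}_{\infty,q}}$. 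This tail decays, but only sub-geometrically: for $\psi(t)=\log^\beta(e+t)$ one gets roughly $N^{1/q'-\beta}$ rather than $2^{-N\delta}$. Optimizing the cut-off $N$ then yields a power-law interpolation $\norm{\omega}{B^0_{\infty,1}}\lesssim \norm{\omega}{L^\infty}^{1-\theta}\norm{\omega}{B^{0,\psi}_{\infty,q}}^{\theta}$ for some fixed $\theta\in(0,1)$, not a logarithm, and plugging this into Theorem \ref{euler a-priori estimate besov} produces a differential inequality of type $Y'\lesssim Y^{1+\theta}$, which permits finite-time blow-up. The log-interpolation trick is exactly what fails at criticality.

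The paper avoids this by replacing the interpolation step with the Vishik composition estimate (Theorem 4.2 of \cite{MV98}), which bounds $\norm{\omega_0\circ X_t^{-1}}{B^0_{\infty,1}}$ by $\norm{\omega_0}{B^0_{\infty,1}}$ times $1+\log(\norm{\nabla X_t}{L^\infty}\norm{\nabla X_t^{-1}}{L^\infty})$. Combining this with the exponential bound on $\norm{\nabla X_t}{L^\infty}$ from the flow ODE and with $\norm{\nabla u}{L^\infty}\lesssim\norm{\omega_0}{L^p}+\norm{\omega}{B^0_{\infty,1}}$ gives a \emph{linear} integral inequality for $\norm{\omega(t)}{B^0_{\infty,1}}$, and Grönwall yields at worst single-exponential growth in $t$, which keeps the BKM integral finite. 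So the key missing idea in your proposal is this $B^0_{\infty,1}$-composition estimate under volume-preserving Lipschitz maps; it is what lets one stay precisely at the critical regularity rather than needing any extra $\delta>0$.
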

\begin{proof}
Recall that in two dimensions the vorticity $\omega$ is transported by the velocity field $u$. That is, the solution to \eqref{euler 2d vorticity} is $\omega(t,x)=\omega(t,X_t^{-1}(x))$. 
Then, using Theorem 4.2 in \cite{MV98} we conclude
\begin{equation}
\label{global besov eq1}
\norm{\omega_0\circ X_t^{-1}}{B^0_{\infty,1}} \leq C(1+\log (\norm{\nabla X_t^{-1}}{L^{\infty}}\norm{\nabla X_t}{L^{\infty}}))\norm{\omega_0}{B^0_{\infty,1}}.
\end{equation}
Using the ODE for the flow map, by the chain rule we have that
$$
\nabla X_t(x)=I_{2\times 2} + \int_0^t \nabla u(\tau,X_{\tau}(x))\nabla X_{\tau}(x)\,d\tau,
$$
which implies
\begin{equation}
\label{global besov eq2}
\norm{\nabla X_t}{L^{\infty}}\leq \exp\left(\int_0^t \norm{\nabla u(\tau)}{L^{\infty}}\,d\tau\right) \leq \exp\left(C\left(\norm{\omega_0}{L^p}t+\int_0^t\norm{\omega(\tau)}{B^0_{\infty,1}}\,d\tau\right)\right).
\end{equation}
Doing the same calculation for the inverse flow map, multiplying the result with \eqref{global besov eq2}, taking the logarithm, and adding one yields
\begin{equation}
\label{global besov eq3}
1+\log(\norm{\nabla X_t^{-1}}{L^{\infty}}\norm{\nabla X_t}{L^{\infty}}) \leq 1+C\left(\norm{\omega_0}{L^p}t+\int_0^t \norm{\omega(\tau)}{B^0_{\infty,1}}\,d\tau\right).
\end{equation}
Then substituting \eqref{global besov eq3} into \eqref{global besov eq1} and applying Grönwall's Lemma yields
\begin{equation}
\norm{\omega(t)}{B^0_{\infty,1}}\leq C\norm{\omega_0}{B^0_{\infty,1}}\left(1+Ct\norm{\omega_0}{L^p}\right)e^{Ct\norm{\omega_0}{B^0_{\infty,1}}}.
\end{equation}
Applying Theorem \ref{BKM-type inequality Besov Space} allows us to extend $u$ to a global in-time solution of \eqref{euler velocity}.
\end{proof}

\section{Well-Posedness in Generalized Triebel-Lizorkin Spaces}\label{Section Triebel}
Having established well-posedness for the Euler equations in Besov spaces of generalized smoothness, we now extend the result to the Triebel–Lizorkin scale. Unlike Besov spaces, which measure dyadic pieces in sequence spaces, Triebel–Lizorkin spaces combine them pointwise before integration, and it naturally includes Sobolev spaces. In particular, recall that $F^{s,\psi}_{p,2}(\R^d)=W^{s,p,\psi}(\R^d)$, but Besov spaces only coincide with Sobolev spaces in the case of $B^{s,\psi}_{2,2}(\R^d)=H^{s,\psi}(\R^d)$. We show that the arguments developed in the Besov case adapt to this context, yielding parallel well-posedness results.
\subsection{Short Time Existence}
\begin{theorem}
\label{triebel lizorkin apriori}
Suppose that $u$ is a smooth solution to \eqref{euler velocity} on $[0,T]\times\R^d$ with $u_0 \in F^{s,\psi}_{p,q}(\R^d)$ where $s=d/p+1$, $p \in  (1,\infty)$, and  $q \in [1,\infty)$. Then for any $t \leq T$
\begin{equation}
\norm{u(t)}{F^{s,\psi}_{p,q}}\leq\norm{u_0}{F^{s,\psi}_{p,q}}\exp\left(C\int_0^t \norm{\nabla u(\tau)}{L^{\infty}}\,d\tau\right).
\end{equation}
\end{theorem}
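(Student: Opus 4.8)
The plan is to transcribe the Lagrangian a priori argument of Theorem~\ref{euler a-priori estimate besov} into the Triebel--Lizorkin setting, the only structural change being that the outer $\ell^q(\mathbb{Z}_{\ge -1})$ norm of $L^p$ norms is replaced by the $L^p$ norm of an inner $\ell^q$ square function. First I would apply $\Delta_j$ to the velocity equation in \eqref{euler velocity} to get $\partial_t\Delta_j u + u\cdot\nabla\Delta_j u = [u\cdot\nabla,\Delta_j]u - \Delta_j\nabla p$, pass to Lagrangian coordinates along the flow map $X_t$ of $u$ (a volume-preserving diffeomorphism since $\mathrm{div}\,u = 0$), and apply $\tilde{\Delta}_j$ together with $\Delta_j = \tilde{\Delta}_j\Delta_j$, exactly as in the Besov proof, to obtain
\begin{equation*}
\frac{d}{dt}\bigl(\Delta_j u(t,X_t(x))\bigr) = \tilde{\Delta}_j\bigl([u\cdot\nabla,\Delta_j]u\bigr)(t,X_t(x)) - \Delta_j\nabla p(t,X_t(x)).
\end{equation*}
I then integrate in time, multiply by $2^{js}\psi(2^j)$, take the $\ell^q$ norm in $j$ and the $L^p$ norm in $x$, and use Minkowski's inequality (for the $\ell^q$ sum and for the time integral) together with the change of variables $y = X_\tau(x)$ at each fixed $\tau$. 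This last step is the essentially the same measure-preservation observation used in the Besov case, but now applied to the full dyadic square function rather than to each block separately; it is legitimate precisely because the $\ell^q$ sum is pointwise in the spatial variable. The outcome is
\begin{equation*}
\norm{u(t)}{F^{s,\psi}_{p,q}} \le \norm{u_0}{F^{s,\psi}_{p,q}} + \int_0^t\norm{\nabla p(\tau)}{F^{s,\psi}_{p,q}}\,d\tau + \int_0^t\norm{\Bigl(\sum_{j\ge -1}2^{jsq}\psi^q(2^j)\bigl|\tilde{\Delta}_j[u\cdot\nabla,\Delta_j]u(\tau)\bigr|^q\Bigr)^{1/q}}{L^p}\,d\tau.
\end{equation*}

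To close the estimate I would invoke the Triebel--Lizorkin analogues of the two calculus inequalities used in the Besov proof. For the commutator term this is the bound
\begin{equation*}
\norm{\Bigl(\sum_{j\ge -1}2^{jsq}\psi^q(2^j)\bigl|\tilde{\Delta}_j[u\cdot\nabla,\Delta_j]u\bigr|^q\Bigr)^{1/q}}{L^p} \lesssim \norm{\nabla u}{L^\infty}\norm{u}{F^{s,\psi}_{p,q}},
\end{equation*}
which I would prove in the appendix using the Bony paraproduct decomposition, Bernstein's Lemma, the domination of dyadic convolutions by the Hardy--Littlewood maximal function (Lemma~\ref{convolution bound by maximal function}), and the Fefferman--Stein vector-valued maximal inequality (Lemma~\ref{vector maximal inequality}); the passage through the maximal function is exactly the ingredient that is not needed in the Besov setting. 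For the pressure I would use $p = \sum_{i,j}(-\Delta)^{-1}\partial_i u_j\,\partial_j u_i$: the homogeneous part is controlled by first noting that $\nabla(-\Delta)^{-1}$ is a smoothing Fourier multiplier of order one, hence bounded from $\dot{F}^{s-1,\psi}_{p,q}$ to $\dot{F}^{s,\psi}_{p,q}$, and then applying the Leibniz rule in generalized Triebel--Lizorkin spaces to obtain $\norm{\nabla p}{\dot{F}^{s,\psi}_{p,q}} \lesssim \norm{\nabla u}{L^\infty}\norm{u}{\dot{F}^{s,\psi}_{p,q}}$; combining this with the elementary $L^p$ bound $\norm{\nabla p}{L^p} \lesssim \norm{(u\cdot\nabla)u}{L^p} \lesssim \norm{\nabla u}{L^\infty}\norm{u}{L^p}$ (boundedness of Calder\'on--Zygmund operators on $L^p$, $p\in(1,\infty)$) and the equivalence $\norm{\cdot}{F^{s,\psi}_{p,q}}\sim\norm{\cdot}{L^p}+\norm{\cdot}{\dot{F}^{s,\psi}_{p,q}}$ yields $\norm{\nabla p}{F^{s,\psi}_{p,q}} \lesssim \norm{\nabla u}{L^\infty}\norm{u}{F^{s,\psi}_{p,q}}$.

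Substituting the commutator and pressure bounds into the displayed inequality gives
\begin{equation*}
\norm{u(t)}{F^{s,\psi}_{p,q}} \le \norm{u_0}{F^{s,\psi}_{p,q}} + C\int_0^t\norm{\nabla u(\tau)}{L^\infty}\,\norm{u(\tau)}{F^{s,\psi}_{p,q}}\,d\tau,
\end{equation*}
and Gr\"onwall's inequality yields the asserted exponential estimate. I expect the main-text computation to be essentially mechanical once these tools are available; the genuine work specific to the Triebel--Lizorkin scale lies in reproving the commutator and Leibniz estimates, where the dyadic pieces must be routed through the maximal function rather than estimated term by term, and this is the step I would treat as the main obstacle.
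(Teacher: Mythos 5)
Your proposal is correct and follows essentially the same route as the paper: Lagrangian coordinates along the flow map, measure-preservation applied to the pointwise square function, the Triebel--Lizorkin commutator estimate (Proposition~\ref{Triebel Lizorkin Commutator Estimate}), the multiplier theorem (Proposition~\ref{TL multiplier theorem}) together with the Leibniz rule (Lemma~\ref{Leibniz rule TL}) for the pressure, and Gr\"onwall. The only cosmetic divergence is that the paper works with the homogeneous blocks $\dot{\Delta}_j$ to obtain the $\dot{F}^{s,\psi}_{p,q}$ estimate and then adds the separate $L^p$ bound, whereas you run the inhomogeneous blocks directly; both are equivalent given the norm equivalence $\norm{\cdot}{F^{s,\psi}_{p,q}}\sim\norm{\cdot}{L^p}+\norm{\cdot}{\dot{F}^{s,\psi}_{p,q}}$.
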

\begin{proof} We follow the proof of Theorem \ref{euler a-priori estimate besov} until \eqref{pre L^p apriori besov}. We integrate \eqref{pre L^p apriori besov} in time to obtain
$$
\dot{\Delta}_j u(t,X_t(x)) = \dot{\Delta}_j u_0(X_t(x)) + \int_0^t \left\{ [ u \cdot \nabla, \dot{\Delta}_j]u(\tau,X_{\tau}(x)) + \dot{\Delta}_j\nabla p(\tau, X_{\tau}(x))\right\}\,d\tau.
$$
Multiplying by $2^{js}\psi(2^j)$, taking the $l^q(\mathbb{Z})$ norm in $j$, and using Minkowski's inequality yields
\begin{multline}
\label{pre Lp apriori tl}
\left(\sum_{j \in \mathbb{Z}} (2^{js}\psi(2^j)|\dot{\Delta}_ju(t,X_t(x))|)^q\right)^{\frac{1}{q}} \leq \left(\sum_{j \in \mathbb{Z}} (2^{js}\psi(2^j)|\dot{\Delta}_ju_0(X_t(x))|)^q\right)^{\frac{1}{q}}\\
+\int_0^t \left(\sum_{j \in \mathbb{Z}} (2^{js}\psi(2^j)|[ u \cdot \nabla, \dot{\Delta}_j]u(\tau,X_{\tau}(x))|)^q\right)^{\frac{1}{q}}\,d\tau + \int_0^t \left(\sum_{j \in \mathbb{Z}} (2^{js}\psi(2^j)|\dot{\Delta}_j\nabla p(\tau, X_{\tau}(x))|)^q\right)^{\frac{1}{q}}d\tau.
\end{multline}
Taking the $L^p$ norm of \eqref{pre Lp apriori tl}, using that $X_t$ is measure preserving, and Proposition \ref{Triebel Lizorkin Commutator Estimate} then gives
\begin{equation}
\norm{u(t)}{\dot{F}^{s,\psi}_{p,q}} \lesssim \norm{u_0}{\dot{F}^{s,\psi}_{p,q}} +\int_0^t \left\{\norm{\nabla u(\tau)}{L^{\infty}}\norm{u(\tau)}{\dot{F}^{s,\psi}_{p,q}} + \norm{\nabla p(\tau)}{\dot{F}^{s,\psi}_{p,q}}\right\}\,d\tau.
\end{equation}
Using Proposition \ref{TL multiplier theorem} and \ref{Leibniz rule TL} we see 
$$
\norm{\nabla p}{\dot{F
}^{s,\psi}_{p,q}} \leq \sum_{i,j=1}^d
\norm{\nabla (-\Delta)^{-1}\partial_iu_j\partial_ju_i}{\dot{F}^{s,\psi}_{p,q}}\lesssim \sum_{i,j=1}^d
\norm{\partial_iu_j\partial_ju_i}{\dot{F}^{s-1,\psi}_{p,q}}\lesssim \norm{\nabla u}{L^{\infty}}\norm{u}{\dot{F}^{s,\psi}_{p,q}}.
$$
We then conclude
\begin{equation}
\label{homogeneuous TL apriori estimate}
\norm{u(t)}{\dot{F}^{s,\psi}_{p,q}} \leq \norm{u_0}{\dot{F}^{s,\psi}_{p,q}} +C \int_0^t \norm{\nabla u(\tau)}{L^{\infty}}\norm{u(\tau)}{\dot{F}^{s,\psi}_{p,q}}\,d\tau.
\end{equation}
Adding the estimate
$$
\norm{u(t)}{L^p} \leq \norm{u_0}{L^p} +C \int_0^t \norm{\nabla u(\tau)}{L^{\infty}}\norm{u(\tau)}{L^p}\,d\tau
,$$
to \eqref{homogeneuous TL apriori estimate} gives
\begin{equation}
\label{nonhomogeneuous TL apriori estimate}
\norm{u(t)}{F^{s,\psi}_{p,q}} \lesssim \norm{u_0}{F^{s,\psi}_{p,q}} + \int_0^t \norm{\nabla u(\tau)}{L^{\infty}}\norm{u(\tau)}{F^{s,\psi}_{p,q}}\,d\tau.
\end{equation}
An application of Grönwall's Lemma yields the desired result.
\end{proof}

When proving that the approximating sequence is Cauchy in $F^{s-1,\psi}_{p,q}(\R^d)$ we need a few technical lemmas which we state below. 
\begin{lemma}
\label{bernstein in TL} Suppose that $f \in \dot{F}^{s,\psi}_{p,q}(\R^d)$ with $p \in (1,\infty) \text{ and } q \in[1,\infty)$, then 
$$
\norm{\Delta_jf}{\dot{F}^{s-1,\psi}_{p,q}} \lesssim 2^{-j}\norm{f}{\dot{F}^{s,\psi}_{p,q}}.
$$
\end{lemma}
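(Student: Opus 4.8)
The plan is to deduce the bound from a single-block Bernstein argument. Fix $j\ge 0$, so that $\Delta_j f=\dot\Delta_j f=\varphi_j\ast f$ and the Fourier transform of $\Delta_j f$ is supported in the annulus $\{\,\tfrac35 2^j\le|\xi|\le\tfrac53 2^j\,\}$. Since the ratio of outer to inner radius of $\mathcal C$ is $25/9<4$, the products $\dot\Delta_k\dot\Delta_j f=\varphi_k\ast\varphi_j\ast f$ vanish unless $|k-j|\le 1$, so in evaluating $\norm{\Delta_j f}{\dot F^{s-1,\psi}_{p,q}}$ the inner $\ell^q$–sum in $k$ reduces to the three indices $k\in\{j-1,j,j+1\}$.

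The next step is to compare the dyadic weights at neighbouring scales. From the Karamata representation (Proposition \ref{Karamata Representation}), together with the continuity of $\psi$ and the fact that it is bounded away from $0$ on $[a,\infty)$ for every $a>0$, one gets $\psi(2t)\sim\psi(t)$ uniformly for $t\ge\tfrac14$; hence $\psi(2^k)\lesssim\psi(2^j)$ for $|k-j|\le 1$, $j\ge 0$. Since also $2^{k(s-1)}\le 2^{|s-1|}2^{j(s-1)}=2^{|s-1|}2^{-j}2^{js}$, and $\|\cdot\|_{\ell^q}\le\|\cdot\|_{\ell^1}$ on a three–term sequence (here $q<\infty$), I would obtain the pointwise bound
\begin{equation*}
\left(\sum_{k\in\mathbb Z}2^{k(s-1)q}\psi^q(2^k)\,|\dot\Delta_k\dot\Delta_j f(x)|^q\right)^{1/q}\lesssim 2^{-j}\,2^{js}\psi(2^j)\sum_{|k-j|\le 1}|\dot\Delta_k\dot\Delta_j f(x)|.
\end{equation*}

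I would then take the $L^p$ norm, pull the finite sum out by the triangle inequality, and estimate each piece by Young's inequality, $\norm{\varphi_k\ast\dot\Delta_j f}{L^p}\le\norm{\varphi_k}{L^1}\norm{\dot\Delta_j f}{L^p}=\norm{\varphi}{L^1}\norm{\dot\Delta_j f}{L^p}$ (one may equally use Lemma \ref{convolution bound by maximal function} together with the $L^p$–boundedness of $\mathcal M$, $p>1$). This yields $\norm{\Delta_j f}{\dot F^{s-1,\psi}_{p,q}}\lesssim 2^{-j}2^{js}\psi(2^j)\norm{\dot\Delta_j f}{L^p}$, and since one term of the series defining $\norm{f}{\dot F^{s,\psi}_{p,q}}$ is dominated by the whole series, $2^{js}\psi(2^j)\norm{\dot\Delta_j f}{L^p}\le\norm{f}{\dot F^{s,\psi}_{p,q}}$, which finishes the proof. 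The only point requiring any care is the uniform comparison $\psi(2^k)\sim\psi(2^j)$ of the weight at adjacent dyadic scales — the place where a naive Besov/Young argument could fail for a pathological $\psi$ — but this is immediate for $\psi\in\mathcal M$ from the Karamata representation; everything else is routine, and, in contrast to general Triebel–Lizorkin estimates, no vector-valued maximal inequality is needed because the $k$–sum has only three terms. (For $j=-1$ the lemma is read with $\Delta_{-1}f=\chi\ast f$ as a single low-frequency block; this index does not occur in the subsequent Cauchy estimates, where $\Delta_j$ always acts with $j\ge 0$.)
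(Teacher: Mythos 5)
Your proof is correct, and it takes a genuinely cleaner route than the one in the paper. Both arguments begin by observing that $\dot\Delta_k\Delta_jf$ vanishes unless $|k-j|\le 1$, so the inner $\ell^q$-sum collapses to at most three terms. From there the paper keeps the full $\ell^q(L^p)$ structure: it bounds $|\dot\Delta_k\Delta_jf|\lesssim\mathcal M(|\dot\Delta_kf|)$ via Lemma~\ref{convolution bound by maximal function}, then invokes the Fefferman--Stein vector-valued maximal inequality (Lemma~\ref{vector maximal inequality}) to remove the maximal function inside the $\ell^q(L^p)$ norm, and finally pulls out $2^{-j}$ by adjusting the dyadic weights at the three neighbouring scales. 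You instead collapse to a \emph{scalar} quantity: you bound the $\ell^q$-block pointwise by $2^{-j}2^{js}\psi(2^j)\sum_{|k-j|\le 1}|\dot\Delta_k\dot\Delta_jf|$, take $L^p$, use Young's inequality $\norm{\varphi_k\ast\dot\Delta_jf}{L^p}\le\norm{\varphi}{L^1}\norm{\dot\Delta_jf}{L^p}$ on each of the three terms, and then observe that $2^{js}\psi(2^j)\norm{\dot\Delta_jf}{L^p}\le\norm{f}{\dot F^{s,\psi}_{p,q}}$ because a single term of the $\ell^q$-bracket is dominated pointwise by the whole bracket. This completely avoids the vector-valued maximal inequality, which is overkill here precisely because the $k$-sum is finite; what you give up is that your weight comparison detours through the Karamata representation (the paper sidesteps this by keeping $\psi(2^{j\pm1})$ intact and only rescaling the power $2^{(s-1)k}$, so the slow-variation hypothesis is never actually invoked in its proof). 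You also correctly note the same implicit restriction to $j\ge 0$ that the paper's proof carries and that is all that is needed where the lemma is used.
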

\begin{proof}
We write
\begin{align*}
\norm{\Delta_jf}{\dot{F}^{s-1,\psi}_{p,q}} &= \norm{\left(\sum_{k \in \mathbb{Z}} \left(2^{(s-1)k}\psi(2^k)|\dot{\Delta}_k\Delta_jf|\right)^q\right)^{1/q}}{L^p}\\
&\lesssim
\norm{\left(\sum_{|k-j|\leq 1} \left(2^{(s-1)k}\psi(2^k)\mathcal{M}(|\dot{\Delta}_kf|)\right)^q\right)^{1/q}}{L^p}\\
&\lesssim
\norm{\left(\sum_{|k-j|\leq 1} \left(2^{(s-1)k}\psi(2^k)|\dot{\Delta}_kf|\right)^q\right)^{1/q}}{L^p}
\end{align*}
where in the first inequality we used the support of the Littlewood-Paley operators, Lemma \ref{convolution bound by maximal function}, and that $\norm{\varphi_j}{L^1}$ is independent of $j$, and in the second inequality we used Lemma \ref{vector maximal inequality}. Expanding out the sum we have
\begin{align*}
&\norm{\left(\sum_{|k-j|\leq 1} \left(2^{(s-1)k}\psi(2^k)|\dot{\Delta}_kf|\right)^q\right)^{1/q}}{L^p} \\&= \norm{\left(2^{(s-1)(j-1)q}\psi^q(2^{j-1})|\dot{\Delta}_{j-1}f|^q + 2^{(s-1)jq}\psi^q(2^{j})|\dot{\Delta}_{j}f|^q + 2^{(s-1)(j+1)q}\psi^q(2^{j+1})|\dot{\Delta}_{j+1}f|^q \right)^{1/q}}{L^p}\\
&= 2^{-j}\norm{\left(2\cdot2^{s(j-1)q}\psi^q(2^{j-1})|\dot{\Delta}_{j-1}f|^q + 2^{sjq}\psi^q(2^{j})|\dot{\Delta}_{j}f|^q + \frac{1}{2}\cdot2^{s(j+1)q}\psi^q(2^{j+1})|\dot{\Delta}_{j+1}f|^q \right)^{1/q}}{L^p}\\
&\leq 2^{-j+1}\norm{\left(2^{s(j-1)q}\psi^q(2^{j-1})|\dot{\Delta}_{j-1}f|^q + 2^{sjq}\psi^q(2^{j})|\dot{\Delta}_{j}f|^q + 2^{s(j+1)q}\psi^q(2^{j+1})|\dot{\Delta}_{j+1}f|^q \right)^{1/q}}{L^p}\\
&\lesssim 2^{-j}\norm{f}{\dot{F}^{s,\psi}_{p,q}}.
\end{align*}
\end{proof}
\begin{lemma}
\label{dumb lemma}
Suppose that $s>1$, $p,q \in (1,\infty)$, and $\psi \in \mathcal{M}$ is non-decreasing. Then $F^{s,\psi}_{p,q}(\R^d) \hookrightarrow F^{1}_{p,2}(\R^d)$
\end{lemma}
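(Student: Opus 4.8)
The plan is to reduce the embedding to a pointwise-in-$x$ inequality between the relevant $q$- and $2$-functions, after which taking the $L^p$ norm finishes the argument. Writing $a_j(x)=|\Delta_j f(x)|$ and $w_j=2^{sj}\psi(2^j)$, so that $\norm{f}{F^{s,\psi}_{p,q}}=\norm{(\sum_{j\ge-1}w_j^q a_j^q)^{1/q}}{L^p}$ and $\norm{f}{F^1_{p,2}}=\norm{(\sum_{j\ge-1}2^{2j}a_j^2)^{1/2}}{L^p}$, the first observation I would record is that, since $\psi\in\mathcal{M}$ is non-decreasing, it is bounded below by some $c>0$ on $[1/2,\infty)$ (Definition \ref{slowly varying}), hence $\psi(2^j)\ge c$ for every $j\ge-1$. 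Consequently the weight ratio $\rho_j:=2^j/w_j=2^{(1-s)j}\psi(2^j)^{-1}$ has, thanks to $s>1$, the two properties that drive everything: it is bounded uniformly in $j\ge-1$ (as $2^{(1-s)j}\le 2^{s-1}$ there), and $\rho_j^{\gamma}$ is summable over $j\ge-1$ for every $\gamma>0$ (geometric decay, since $(1-s)\gamma<0$).

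Next I would split into the cases $q\le 2$ and $q>2$. When $q\le2$, the pointwise sequence inclusion $\ell^q\hookrightarrow\ell^2$ gives, for each $x$, $(\sum_j 2^{2j}a_j(x)^2)^{1/2}=\norm{(\rho_j w_j a_j(x))_j}{\ell^2}\le\norm{(\rho_j w_j a_j(x))_j}{\ell^q}\lesssim(\sum_j w_j^q a_j(x)^q)^{1/q}$, using $\sup_{j\ge-1}\rho_j<\infty$. When $q>2$ one cannot use $\ell^q\subset\ell^2$; instead I would apply Hölder's inequality in $j$ with exponents $q/2$ and $r:=q/(q-2)$ to $\sum_j 2^{2j}a_j(x)^2=\sum_j\rho_j^2(w_j a_j(x))^2$, obtaining the bound $(\sum_j\rho_j^{2r})^{1/r}(\sum_j(w_j a_j(x))^q)^{2/q}$, where $\sum_{j\ge-1}\rho_j^{2r}\le c^{-2r}\sum_{j\ge-1}2^{2r(1-s)j}<\infty$ because $2r(1-s)<0$. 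In both cases this produces the pointwise estimate $(\sum_j 2^{2j}a_j(x)^2)^{1/2}\lesssim(\sum_j w_j^q a_j(x)^q)^{1/q}$; raising to the power $p$, integrating over $\R^d$, and taking $p$-th roots then yields $\norm{f}{F^1_{p,2}}\lesssim\norm{f}{F^{s,\psi}_{p,q}}$, which is the asserted continuous embedding.

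I expect the case $q>2$ to be the only genuine obstacle: there the naive inclusion $\ell^q\subset\ell^2$ fails, so one must convert the strict surplus of smoothness $s-1>0$ (together with the lower bound $\psi\ge c$) into $\ell^r$-summability of the weight ratio, which is precisely the step that uses $s>1$ rather than merely $s\ge1$. As an alternative route, one could combine the elementary embedding $F^{s,\psi}_{p,q}(\R^d)\hookrightarrow F^s_{p,q}(\R^d)$ — again immediate from $\psi\gtrsim1$ on $[1/2,\infty)$ — with the classical Triebel--Lizorkin embedding $F^s_{p,q}(\R^d)\hookrightarrow F^1_{p,2}(\R^d)$ valid for $s>1$ and arbitrary fine indices, but the self-contained computation above is of comparable length and avoids invoking external embedding theorems.
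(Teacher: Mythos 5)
Your argument reproduces the paper's proof essentially verbatim: the same split into the two cases $q\le 2$ (pointwise $\ell^q\hookrightarrow\ell^2$ plus the uniform bound on $\rho_j=2^{j(1-s)}\psi(2^j)^{-1}$, which only needs $s\ge 1$ and $\psi$ bounded below) and $q>2$ (Hölder in $j$ with conjugate exponents $q/2$ and $q/(q-2)$, so that $(\rho_j)\in\ell^{2q/(q-2)}$ is exactly what $s>1$ buys). The write-up is correct; the remark at the end about factoring through $F^s_{p,q}\hookrightarrow F^1_{p,2}$ is a genuine but unused alternative and does not change the substance.
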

\begin{proof}
We analyze two separate cases. If $q \leq 2$ we use a $l^q(\mathbb{Z}_{\geq -1})$ embedding and the inequality $2^{k} \leq C2^{ks}\psi(2^k)$ for $s \geq 1$ to see that
$$
\norm{\left(\sum_{k=-1}^{\infty}(2^k|\Delta_kf|^2\right)^{1/2}}{L^p} \leq C\norm{\left(\sum_{k=-1}^{\infty} \left(2^{ks}\psi(2^k)|\Delta_kf|\right)^q\right)^{1/q}}{L^p}.
$$
If $q>2$ then we use Hölder's inequality to observe that 
$$
\norm{\left(\sum_{k=-1}^{\infty}(2^k|\Delta_kf|^2\right)^{1/2}}{L^p} \leq \norm{\frac{2^{k(1-s)}}{\psi(2^k)}}{l^{\frac{2q}{q-2}}}\norm{\left(\sum_{k=-1}^{\infty} \left(2^{ks}\psi(2^k)|\Delta_kf|\right)^q\right)^{1/q}}{L^p}.
$$
In either case for $s>1$ we have shown that $\norm{f}{F^{1}_{p,2}} \lesssim \norm{f}{F^{s,\psi}_{p,q}}$.
\end{proof}
We are now ready to establish short-time existence in the space $F^{s,\psi}_{p,q}(\R^d)$.
\begin{theorem}\label{short time euler triebel lizorkin} Suppose $u_0 \in F^{s,\psi}_{p,q}(\R^d)$, $d\geq 2$, $s=d/p+1$, $p,q \in (1,\infty)$, and $\psi \in \mathcal{M}_{p'}$. Then there exists a $T=T(\norm{u_0}{F^{s,\psi}_{p,q}})>0$ such that a unique solution
$$
u \in C([0,T]; F^{s,\psi}_{p,q}(\R^d)),
$$
of the system \eqref{euler velocity} exists.
\end{theorem}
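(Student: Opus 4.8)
The plan is to transplant the proof of Theorem~\ref{short time euler besov} to the Triebel--Lizorkin scale, replacing each Besov-space estimate by its $F$-space analogue. I would first define the approximating sequence $(u^{(n)},p^{(n)})$ exactly as in \eqref{approximating sequence PDE}--\eqref{pressure approximation}: solve the linear transport problem $\partial_t u^{(n)} + (u^{(n-1)}\cdot\nabla)u^{(n)} = -\nabla p^{(n-1)}$, $\operatorname{div}u^{(n)}=0$, with truncated data $S_n u_0$, with the pressure recovered from the velocity by the usual Riesz-transform identity, and with $u^{(0)}=p^{(0)}=0$. Writing $X^s_T := C([0,T];F^{s,\psi}_{p,q}(\R^d))$, the three things to establish are: a uniform bound $\|u^{(n)}\|_{X^s_T}\le 2\|u_0\|_{F^{s,\psi}_{p,q}}$ on a time interval whose length depends only on $\|u_0\|_{F^{s,\psi}_{p,q}}$; that $(u^{(n)})$ is Cauchy in the lower-regularity space $X^{s-1}_T$; and that the limit is a classical solution of \eqref{euler velocity}.

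For the uniform bound, I would run the a priori computation of Theorem~\ref{triebel lizorkin apriori} on the linearized system; absorbing the $\|\nabla u^{(n-1)}\|_{L^\infty}$ and $\|u^{(n-1)}\|_{L^\infty}$ factors through the embedding $F^{s,\psi}_{p,q}(\R^d)\hookrightarrow B^1_{\infty,1}(\R^d)$ of Proposition~\ref{key triebel lizorkin embedding} (available precisely because $\psi\in\mathcal M_{p'}$, so that $s=d/p+1$ yields target index $1$) gives an inequality of the schematic form $\|u^{(n)}(t)\|_{F^{s,\psi}_{p,q}} \lesssim \|u_0\|_{F^{s,\psi}_{p,q}} + \int_0^t (\|u^{(n-1)}\|_{F^{s,\psi}_{p,q}}^2 + \|u^{(n-1)}\|_{F^{s,\psi}_{p,q}}\|u^{(n)}\|_{F^{s,\psi}_{p,q}})\,d\tau$. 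An induction closed by Grönwall's lemma, with $T$ chosen as in \eqref{Short time T}, then delivers $\|u^{(n)}\|_{X^s_T}\le 2\|u_0\|_{F^{s,\psi}_{p,q}}$.

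The Cauchy argument is where the Triebel--Lizorkin structure forces changes. Writing the equation for $u^{(n)}-u^{(m)}$ as in \eqref{difference eq 1 besov}, applying $\dot\Delta_j$, passing to Lagrangian coordinates along the flow of $u^{(n-1)}$, integrating in time, multiplying by $2^{j(s-1)}\psi(2^j)$, taking $\ell^q$ in $j$ and then $L^p$ in $x$, I would split into the initial-data term, the two nonlinear/transport terms, the commutator term, and the pressure term. The nonlinear and commutator terms are closed by the Leibniz estimate Proposition~\ref{Leibniz rule TL} and the commutator estimate Proposition~\ref{Triebel Lizorkin Commutator Estimate}, with the $L^\infty$ factors of the velocity difference controlled by $F^{s-1,\psi}_{p,q}(\R^d)\hookrightarrow B^0_{\infty,1}(\R^d)\hookrightarrow L^\infty(\R^d)$ (again Proposition~\ref{key triebel lizorkin embedding}, now at regularity $s-1=d/p$). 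The pressure difference $\nabla p^{(n-1)}-\nabla p^{(m-1)}$ I would rewrite exactly as in the Besov proof, so that only Calderón--Zygmund operators (Proposition~\ref{TL multiplier theorem}) act on products of a velocity and a velocity difference. The one genuinely new point is the initial-data term $\|\sum_{j=1}^{n-m}\Delta_{j+m}u_0\|_{\dot F^{s-1,\psi}_{p,q}}$: since frequency blocks do not telescope for free inside an $L^p$-norm of an $\ell^q$-sum, I would invoke Lemma~\ref{bernstein in TL} to get $\|\Delta_{j+m}u_0\|_{\dot F^{s-1,\psi}_{p,q}}\lesssim 2^{-(j+m)}\|u_0\|_{\dot F^{s,\psi}_{p,q}}$ and sum to obtain the gain $2^{-m}\|u_0\|_{F^{s,\psi}_{p,q}}$. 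Adding the nonhomogeneous $L^p$ estimate obtained directly from the difference equation and shrinking $T$ gives $\|u^{(n)}-u^{(m)}\|_{X^{s-1}_T}\le C_1 2^{-m} + 2C_1 T\|u^{(n-1)}-u^{(m-1)}\|_{X^{s-1}_T}$, which iterates to the Cauchy property; Lemma~\ref{dumb lemma} serves as an auxiliary embedding keeping the lower-order $L^p$-type quantities finite.

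Finally, the uniform bound gives weak-$\ast$ convergence of a subsequence in $X^s_T$, the strong convergence in $X^{s-1}_T$ identifies the limit, and passing to the limit in the weak formulation — using $S_n u_0\to u_0$ in $L^\infty$ and the strong $X^{s-1}_T$ convergence to handle the quadratic term — shows the limit is a weak solution in the sense of Definition~\ref{weak solution}; the embedding $F^{s,\psi}_{p,q}(\R^d)\hookrightarrow B^1_{\infty,1}(\R^d)\hookrightarrow C^1(\R^d)$ then upgrades it to a classical solution, and uniqueness follows from the same contraction estimate. The main obstacle, relative to the Besov case, is that $\ell^q$-summation and $L^p$-integration no longer commute: every use of Minkowski's inequality on dyadic pieces in the Besov proof must here be routed through the Fefferman--Stein vector-valued maximal inequality (Lemma~\ref{vector maximal inequality}) together with maximal-function domination of convolutions (Lemma~\ref{convolution bound by maximal function}), and one must check that the commutator and product estimates genuinely persist in the generalized Triebel--Lizorkin scale — which is exactly what the appendix propositions cited above are for.
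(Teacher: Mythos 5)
Your proposal matches the paper's proof essentially step for step: the same approximating scheme and uniform bound via Theorem~\ref{triebel lizorkin apriori} and the embedding $F^{s,\psi}_{p,q}\hookrightarrow B^1_{\infty,1}$, the same $I,II,III,IV$ decomposition in the Cauchy estimate with Lemma~\ref{bernstein in TL} handling the initial-data term, Propositions~\ref{Triebel Lizorkin Commutator Estimate}, \ref{Leibniz rule TL}, and \ref{TL multiplier theorem} for the commutator, transport, and pressure terms, and Lemma~\ref{dumb lemma} to control $\norm{\nabla u^{(n-1)}}{F^{s-1,\psi}_{p,q}}$ via $W^{1,p}=F^1_{p,2}$. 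The only cosmetic difference is that you emphasize the Fefferman--Stein/maximal-function machinery in the body of the argument, whereas the paper defers those ingredients to the appendix lemmas being cited; the substance is the same.
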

\begin{proof}
We follow the proof of Theorem \ref{short time euler besov} closely. We summarize the steps below.
\\
\\
\textbf{The Approximating Sequence:} We generate a sequence $(u^{(n)},p^{(n)})$ which satisfies \eqref{approximating sequence PDE} and \eqref{pressure approximation}.
\\
\\
\textbf{Uniform Bounds }We let $X^s_T= C([0,T];F^{s,\psi}_{p,q}(\R^d))$. Following the proof of Theorem \ref{triebel lizorkin apriori} we see the solution $(u^{(n)},p^{(n)})$ to \eqref{approximating sequence PDE}  satisfies
\begin{equation}
\label{TL uniform bound}
\norm{u^{(n)}(t)}{F^{s,\psi}_{p,q}} \leq C\left(\norm{u_0}{F^{s,\psi}_{p,q}} + \int_0^t \left\{ \norm{u^{(n-1)}(\tau)}{F^{s,\psi}_{p,q}}^2 + \norm{u^{(n-1)}(\tau)}{F^{s,\psi}_{p,q}}\norm{u^{(n)}(\tau)}{F^{s,\psi}_{p,q}}\right\}\,d\tau\right).
\end{equation}
Following the steps of Theorem \ref{short time euler besov} we see
\begin{equation}
\label{uniform bound TL}
\norm{u^{(n)}}{X_T^s}\leq 2\norm{u_0}{F^{s,\psi}_{p,q}}, \text{ for all } n \geq 0 \text{ and } T \leq T_0,
\end{equation}
where $T_0$ satisfies \eqref{Short time T}.
\\
\\
\noindent
\textbf{$u^{(n)}$ is Cauchy is $X^{s-1}_T$:} Without loss of generality, assume that $n \geq m$. We see that the difference of $u^{(n)}$ and $u^{(m)}$ must satisfy the following differential equation
\begin{equation}
\label{difference eq 1 TL}
\begin{cases}
\partial_t(u^{(n)}-u^{(m)})+(u^{(n-1)}\cdot\nabla)(u^{(n)}-u^{(m)}) = - ((u^{(n-1)}-u^{(m-1)})\cdot\nabla)u^{(m)} - \nabla (p^{(n-1)}-p^{(m-1)})\\
\left(u^{(n)}-u^{(m)}\right)\vert_{t=0}=\sum_{j=1}^{n-m}\Delta_{j+m}u_0.
\end{cases}
\end{equation}
In what follows, we let $X_{t,n-1}$ be the corresponding flow map for $u^{(n-1)}$. Applying $\dot{\Delta}_j$ to \eqref{difference eq 1 TL} and writing the result in Lagrangian coordinates yields
\begin{multline}
\label{difference eq 2 TL}
\frac{d}{dt}\left( \dot{\Delta}_j(u^{(n)}-u^{(m)})(t,X_{t,n-1}(x))\right) = -\dot{\Delta}_j\left(((u^{(n-1)}-u^{(m-1)})\cdot\nabla)u^{(m)}\right)(t,X_{t,n-1}(x))\\ - \dot{\Delta}_j\nabla(p^{(n-1)}-p^{(m-1)})(t,X_{t,n-1}(x)) + \left(\left([u^{(n-1)}\cdot\nabla,\dot{\Delta}_j](u^{(n)}-u^{(m)})\right)(t,X_{t,n-1}(x))\right).
\end{multline}
We then integrate \eqref{difference eq 2 TL} in time, multiply by $2^{j(s-1)}\psi(2^j)$, take the $l^q(\mathbb{Z})$ norm, apply the triangle inequality, and use Minkowski's inequality to see
\begin{align*}
&\left(\sum_{j \in \mathbb{Z}}\left(2^{j(s-1)}\psi(2^j)\left|\dot{\Delta}_j(u^{(n)}-u^{(m)})(t,X_{t,n-1}(x)) \right|\right)^q\right)^{\frac{1}{q}}
\\ &\leq \left(\sum_{j \in \mathbb{Z}}\left(2^{j(s-1)}\psi(2^j)\left|\dot{\Delta}_j(u^{(n)}-u^{(m)})(0) \right|\right)^q\right)^{\frac{1}{q}}
\\ &\quad+\int_0^t \left(\sum_{j \in \mathbb{Z}}\left(2^{j(s-1)}\psi(2^j)\left|\dot{\Delta}_j\left(((u^{(n-1)}-u^{(m-1)})\cdot\nabla)u^{(m)}\right|\right)(\tau,X_{\tau,n-1}(x))\right)^q\,d\tau\right)^{\frac{1}{q}}
\\ &\quad+\int_0^t \left(\sum_{j \in \mathbb{Z}}\left(2^{j(s-1)}\psi(2^j)\left|\left(\left([u^{(n-1)}\cdot\nabla,\dot{\Delta}_j](u^{(n)}-u^{(m)})\right)(\tau, X_{\tau,n-1}(x))\right)\right|\right)^q\,d\tau\right)^{\frac{1}{q}}\\&\quad+\int_0^t \left(\sum_{j \in \mathbb{Z}}\left(2^{j(s-1)}\psi(2^j)|\dot{\Delta}_j\nabla(p^{(n-1)}-p^{(m-1)})(\tau,X_{\tau,n-1}(x))|\right)^q\,d\tau\right)^{\frac{1}{q}}.
\end{align*}
We then take the $L^p(\R^d)$ norm and use that $X_{\tau,n-1}(x)$ is measure preserving to see
\begin{equation}
\begin{split}
\label{I,II,III,IV estimate TL short time}
\|(u^{(n)}-&u^{(m)})(t)\|_{\dot{F}^{s-1,\psi}_{p,q}} \leq \norm {(u^{(n)}-u^{(m)})(0)}{\dot{F}^{s-1,\psi}_{p,q}}\\ 
&+\int_0^t \norm{\norm{2^{j(s-1)}\psi(2^j)\left|\left(\left([u^{(n-1)}\cdot\nabla,\dot{\Delta}_j](u^{(n)}-u^{(m)})\right)(\tau,X_{\tau}^{n-1}(x))\right)\right|}{l^q(\mathbb{Z})}}{L^p}\,d\tau.
\\ &+ \int_0^t \norm{\left(((u^{(n-1)}-u^{(m-1)})\cdot\nabla)u^{(m)}\right)(\tau)}{\dot{F}^{s-1,\psi}_{p,q}}\,d\tau+\int_0^t \norm{\nabla(p^{(n-1)}-p^{(m-1)})(\tau)}{\dot{F}^{s-1,\psi}_{p,q}}\,d\tau
\\ &=I+II+III+IV.
\end{split}
\end{equation}
We now estimate the four terms separately.
\\
\\
\textbf{Estimate of $I$:} Utilizing Lemma \ref{bernstein in TL} we have 
\begin{equation}
\label{I estimate TL}
    I=\norm{\sum_{j=1}^{n-m} \Delta_{j+m}u_0}{\dot{F}^{s-1,\psi}_{p,q}}
    \leq \sum_{j=1}^{n-m} \norm{\Delta_{j+m}u_0}{\dot{F}^{s-1,\psi}_{p,q}}
    \lesssim \sum_{j=1}^{n-m} 2^{-(j+m)}\norm{u_0}{\dot{F}^{s,\psi}_{p,q}}
    \lesssim 2^{-m}\norm{u_0}{\dot{F}^{s,\psi}_{p,q}}.
\end{equation}
\\
\textbf{Estimate of $II$:}
Using Proposition \ref{Triebel Lizorkin Commutator Estimate} we have 
\begin{equation}
\label{II estimate i TL}
II \lesssim \int_0^t \left(\norm{\nabla u^{(n-1)}(\tau)}{L^{\infty}}\norm{(u^{(n)}-u^{(m)})(\tau)}{F^{s-1,\psi}_{p,q}} + \norm{(u^{(n)}-u^{(m)})(\tau)}{L^{\infty}}\norm{\nabla u^{(n-1)}(\tau)}{F^{s-1,\psi}_{p,q}}\right)\,d\tau.
\end{equation}
We note that
$$\norm{\nabla u^{(n-1)}(\tau)}{L^{\infty}} \lesssim \norm{u^{(n-1)}(\tau)}{F^{s,\psi}_{p,q}}\quad \text{ and }\quad\norm{(u^{(n)}-u^{(m)})(\tau)}{L^{\infty}} \lesssim \norm{(u^{(n-1)} - u^{(m-1)})(\tau)}{F^{s-1,\psi}_{p,q}}.
$$
Then since $W^{1,p}(\R^d)=F^1_{p,2}(\R^d)$, we utilize Lemma \ref{dumb lemma} to see that
\begin{align*}
\norm{\nabla u^{(n-1)}(\tau)}{F^{s-1\psi}_{p,q}}&\sim \norm{\nabla u^{(n-1)}(\tau)}{L^p}+\norm{\nabla u^{(n-1)}(\tau)}{\dot{F}^{s-1,\psi}_{p,q}}\\ &\lesssim \norm{u^{(n-1)}(\tau)}{F^1_{p,2}} + \norm{\nabla u^{(n-1)}(\tau)}{\dot{F}^{s-1,\psi}_{p,q}}\\ &\lesssim \norm{u^{(n-1)}(\tau)}{F^{s,\psi}_{p,q}}.\end{align*} We then bound $II$ as follows
\begin{equation}
\label{II estiamte TL}
II \lesssim \int _0^t \norm{u^{(n-1)}(\tau)}{F^{s,\psi}_{p,q}}\norm{(u^{(n)}-u^{(m)})(\tau)}{F^{s-1,\psi}_{p,q}}\,d\tau.
\end{equation}
\textbf{Estimate of $III$:} Using Proposition \ref{Leibniz rule TL} we bound $III$ by

\begin{multline}
    \label{III estimate TL}
III=\int_0^t \norm{\left(((u^{(n-1)}-u^{(m-1)})\cdot\nabla)u^{(m)}\right)(\tau)}{\dot{F}^{s-1,\psi}_{p,q}}\,d\tau \\
\lesssim \int_0^t \left(\norm{(u^{(n-1)}-u^{(m-1)})(\tau)}{\dot{F}^{s-1,\psi}_{p,q}} \norm{\nabla u^{(m)}(\tau)}{L^{\infty}} + \norm{\nabla u^{(m)}(\tau)}{\dot{F}^{s-1,\psi}_{p,q}}\norm{(u^{(n-1)}-u^{(m-1)})(\tau)}{L^{\infty}}\right)\,d\tau\\
\lesssim \int_0^t \norm{(u^{(n-1)}-u^{(m-1)})(\tau)}{\dot{F}^{s-1,\psi}_{p,q}}\norm{u^{(m)}(\tau)}{F^{s,\psi}_{p,q}}\,d\tau.
\end{multline}
\textbf{Estimate of $IV$:} Finally, we handle the pressure  term. We first recall the identity
\begin{align*}
\nabla p^{(n-1)}-\nabla p^{(m-1)} &= \sum_{i,j=1}^d (-\Delta)^{-1}\nabla \partial_j\left\{\left(\partial_iu_j^{(n-1)}\right)\left(u_i^{(n-1)}-u_i^{(m-1)}\right)\right\}\\ 
&\hspace{1cm}+ \sum_{i,j=1}^d (-\Delta)^{-1}\nabla \partial_i \left\{\left(\partial_ju_i^{(m-1)}\right)\left(u_j^{(n-1)}-u_j^{(m-1)}\right)\right\}.
\end{align*}
Then, using Proposition \ref{TL multiplier theorem} we find 
\begin{multline}
    \label{IV estimate TL}
    IV \lesssim \int_0^t \left(\norm{(\nabla u^{(n-1)} \cdot (u^{(n-1)}-u^{(m-1)}))(\tau)}{\dot{F}^{s-1,\psi}_{p,q}}+\norm{(\nabla u^{(m-1)} \cdot (u^{(n-1)}-u^{(m-1)}))(\tau)}{\dot{F}^{s-1,\psi}_{p,q}}  \right)\,d\tau\\
    \lesssim \int_0^t \left( \norm{u^{(n-1)}(\tau)}{F^{s,\psi}_{p,q}} + \norm{u^{(m-1)}(\tau)}{F^{s,\psi}_{p,q}}\right)\norm{(u^{(n-1)}-u^{(m-1)})(\tau)}{F^{s-1,\psi}_{p,q}}\,d\tau.
\end{multline}
Using the estimates  \eqref{I estimate TL}, \eqref{II estiamte TL}, \eqref{III estimate TL}, and \eqref{IV estimate TL} we see that 
\begin{multline}
\label{TL Cauchy Estimate}
\norm{(u^{(n)}-u^{(m)})(t)}{\dot{F}^{s-1,\psi}_{p,q}} \lesssim 2^{-m}\norm{u_0}{\dot{F}^{s,\psi}_{p,q}}+\int_0^t \norm{u^{(n-1)}(\tau)}{F^{s,\psi}_{p,q}}\norm{(u^{(n)}-u^{(m)})(\tau)}{F^{s-1,\psi}_{p,q}}\,d\tau\\ 
+\int_0^t \left(\norm{u^{(m)}(\tau)}{F^{s,\psi}_{p,q}}+\norm{u^{(n-1)}(\tau)}{F^{s,\psi}_{p,q}}+\norm{u^{(m-1)}(\tau)}{F^{s,\psi}_{p,q}}\right)\norm{(u^{(n-1)}-u^{(m-1)})(\tau)}{F^{s-1,\psi}_{p,q}}\,d\tau.
\end{multline}
The proof then finishes in the exact same manner as Theorem \ref{short time euler besov}.
\end{proof}
\subsection{Global Existence} 
\begin{theorem}
\label{bkm triebel lizorkin}
Let $p,q\in (1,\infty)$, $s=d/p+1$, and $\psi \in \mathcal{M}_{p'}$. Then a solution $u \in C([0,T];F^{s,\psi}_{p,q}(\R^d))$ blows up at time $T^*>T$, namely
$$
\limsup_{t \to T^{*}} \norm{u(t)}{F^{s,\psi}_{p,q}}=\infty
$$
if and only if 
$$
\int_0^{T^*} \norm{\omega (t)}{\dot{B}^0_{\infty,1}}\,dt=\infty.
$$
\end{theorem}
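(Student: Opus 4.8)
The plan is to follow the proof of Theorem~\ref{BKM-type inequality Besov Space} essentially verbatim, replacing the a priori estimate of Theorem~\ref{euler a-priori estimate besov} by that of Theorem~\ref{triebel lizorkin apriori} and the Besov embedding Proposition~\ref{embedding theorem} by its Triebel--Lizorkin counterpart Proposition~\ref{key triebel lizorkin embedding}, which is exactly where the hypothesis $\psi \in \mathcal{M}_{p'}$ enters. Both implications will be established by contraposition.

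For the ``only if'' direction I would start from Theorem~\ref{triebel lizorkin apriori},
$$
\norm{u(t)}{F^{s,\psi}_{p,q}} \leq \norm{u_0}{F^{s,\psi}_{p,q}}\exp\left(C\int_0^t \norm{\nabla u(\tau)}{L^{\infty}}\,d\tau\right),
$$
and then estimate $\norm{\nabla u}{L^\infty}$ purely in terms of the vorticity. Using the embedding $B^0_{\infty,1}(\R^d)\hookrightarrow L^\infty(\R^d)$ together with the Biot--Savart law $u=\text{div}((-\Delta)^{-1}\omega)$, I split $\nabla u$ into Littlewood--Paley blocks: the single low-frequency block $\Delta_{-1}\nabla u$ is controlled by Bernstein's Lemma and the $L^p$-boundedness of the (smooth, compactly supported) zeroth-order multiplier relating $\nabla u$ to $\omega$, while $\sum_{j\geq 0}\norm{\Delta_j \nabla u}{L^\infty}$ is comparable to $\norm{\omega}{\dot B^0_{\infty,1}}$. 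This yields $\norm{\nabla u}{L^\infty}\lesssim \norm{\omega_0}{L^p}+\norm{\omega}{\dot B^0_{\infty,1}}$, exactly as in the Besov case, and substituting back gives
$$
\norm{u(t)}{F^{s,\psi}_{p,q}} \leq \norm{u_0}{F^{s,\psi}_{p,q}}\, e^{C\norm{\omega_0}{L^p}\,t}\exp\left(C\int_0^t \norm{\omega(\tau)}{\dot B^0_{\infty,1}}\,d\tau\right).
$$
Hence if $\int_0^{T^*}\norm{\omega(t)}{\dot B^0_{\infty,1}}\,dt<\infty$, the right-hand side stays bounded as $t\to T^*$, so $\limsup_{t\to T^*}\norm{u(t)}{F^{s,\psi}_{p,q}}<\infty$ and no blow-up occurs at $T^*$; this is the contrapositive of the ``only if'' statement.

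For the ``if'' direction I would argue by contraposition: assume $\sup_{0\le t<T^*}\norm{u(t)}{F^{s,\psi}_{p,q}}=M<\infty$. Since $\omega_{ij}=\partial_j u_i-\partial_i u_j$, one has $\norm{\omega(t)}{F^{s-1,\psi}_{p,q}}\lesssim \norm{u(t)}{F^{s,\psi}_{p,q}}$, and because $s-1=d/p$ and $\psi\in\mathcal{M}_{p'}$, Proposition~\ref{key triebel lizorkin embedding} gives $F^{s-1,\psi}_{p,q}(\R^d)\hookrightarrow B^0_{\infty,1}(\R^d)$; pairing this with the elementary low-frequency bound $\sum_{j<0}\norm{\dot\Delta_j\omega}{L^\infty}\lesssim \norm{u(t)}{L^p}\lesssim \norm{u(t)}{F^{s,\psi}_{p,q}}$ (Bernstein's Lemma plus Biot--Savart) produces $\norm{\omega(t)}{\dot B^0_{\infty,1}}\lesssim \norm{u(t)}{F^{s,\psi}_{p,q}}$. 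Integrating,
$$
\int_0^{T^*}\norm{\omega(t)}{\dot B^0_{\infty,1}}\,dt \le C\,T^*\sup_{0\le t<T^*}\norm{u(t)}{F^{s,\psi}_{p,q}}=C\,T^*\,M<\infty,
$$
which is the contrapositive of the ``if'' statement.

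The only genuinely new point compared with the Besov proof is checking that Proposition~\ref{key triebel lizorkin embedding} applies at the shifted index $s-1=d/p$ (so its hypothesis $s\ge d/p$ holds with equality) and assembling the homogeneous norm $\dot B^0_{\infty,1}$ from the inhomogeneous embedding together with the low-frequency Bernstein estimates. I do not expect any serious obstacle here, since the product, commutator, and multiplier estimates in the $F^{s,\psi}_{p,q}$ scale needed for the a priori bound are already available from Section~\ref{Section Triebel} and the appendix.
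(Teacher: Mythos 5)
Your proposal is correct and follows the paper's proof essentially verbatim: both directions rest on the a priori estimate of Theorem~\ref{triebel lizorkin apriori}, the bound $\norm{\nabla u}{L^\infty}\lesssim \norm{\omega_0}{L^p}+\norm{\omega}{\dot B^0_{\infty,1}}$ via Biot--Savart for the ``only if'' part, and Proposition~\ref{key triebel lizorkin embedding} for the ``if'' part. Your treatment of the ``if'' direction is slightly more explicit than the paper's (which invokes the inhomogeneous embedding directly) in that you spell out the Bernstein-based low-frequency bound $\sum_{j<0}\norm{\dot\Delta_j\omega}{L^\infty}\lesssim\norm{u}{L^p}$ needed to pass from the inhomogeneous $B^0_{\infty,1}$ norm to the homogeneous $\dot B^0_{\infty,1}$ norm, but this is an elaboration, not a different route.
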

\begin{proof}
From Theorem \ref{triebel lizorkin apriori} we have
\begin{equation}
\label{bkm tl 1}
\norm{u(t)}{F^{s,\psi}_{p,q}} \leq \norm{u_0}{F^{s,\psi}_{p,q}}\exp\left(C\int_0^t \norm{\nabla u(\tau)}{L^{\infty}}\,d\tau\right).
\end{equation}
Then, using the estimate 
$$
\norm{\nabla u(t)}{L^\infty} \leq \norm{\omega_0}{L^p} + \norm{\omega(t)}{\dot{B}^0_{\infty,1}}
$$
yields
\begin{equation}
\label{bkm tl 2}
\norm{u(t)}{F^{s,\psi}_{p,q}}\leq \norm{u_0}{F^{s,\psi}_{p,q}}e^{Ct\norm{\omega_0}{L^p}}\exp\left(C\int_0^t \norm{\omega(\tau)}{\dot{B}^0_{\infty,1}}\,d\tau\right)
\end{equation}
which shows the ``only if" part of the theorem. To see the ``if" part, thanks to Proposition \ref{key triebel lizorkin embedding} we have the following series of inequalities
$$
\int_0^T \norm{\omega(t)}{\dot{B}^0_{\infty,1}}\,dt \leq T\sup_{0 \leq t \leq T} \norm{\omega(t)}{\dot{B}^0_{\infty,1}}\leq CT\sup_{0\leq t \leq T} \norm{u(t)}{F^{s,\psi}_{p,q}}.
$$
\end{proof}

We skip the proof of the following corollary which is identical to that of Corollary \ref{global well posedness in Besov Spaces}.
\begin{corollary}
\label{global well posedness in TL Spaces}
Let $p,q \in (1,\infty), \psi \in \mathcal{M}_{p'} $, and $s=\frac{2}{p}+1$. Given $u_0 \in F^{s,\psi}_{p,q}(\R^2)$ there is a unique solution to \eqref{euler velocity} with $u \in C([0,\infty);F^{s,\psi}_{p,q}(\R^2))$.
\end{corollary}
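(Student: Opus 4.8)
The plan is to reproduce, essentially word for word, the proof of Corollary~\ref{global well posedness in Besov Spaces}, replacing its two Besov inputs by the Triebel--Lizorkin analogues already at our disposal: the local solution produced in Theorem~\ref{short time euler triebel lizorkin} and the continuation criterion of Theorem~\ref{bkm triebel lizorkin}. The reason the transcription is legitimate is that the quantitative core of the argument --- a closed bound for the vorticity --- takes place entirely in the spaces $B^0_{\infty,1}(\R^2)$ and $L^p(\R^2)$, which do not care whether the velocity is measured in a Besov or a Triebel--Lizorkin norm; the ambient space for $u$ enters only through the blow-up criterion, and Theorems~\ref{BKM-type inequality Besov Space} and~\ref{bkm triebel lizorkin} have precisely the same form.

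First I would check the initial regularity of the vorticity. Since $\omega_0 = \partial_1 u_{0,2} - \partial_2 u_{0,1}$ costs one derivative, $\omega_0 \in F^{s-1,\psi}_{p,q}(\R^2)$ with $s-1 = 2/p = d/p$, so Proposition~\ref{key triebel lizorkin embedding} (applied with exponent $s-1$, using $\psi \in \mathcal{M}_{p'}$) gives $\omega_0 \in B^0_{\infty,1}(\R^2)$; also $u_0 \in F^{s,\psi}_{p,q}(\R^2) \hookrightarrow W^{1,p}(\R^2)$ by Lemma~\ref{dumb lemma} (valid since $s>1$ and $F^1_{p,2} = W^{1,p}$), so $\omega_0 \in L^p(\R^2)$. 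Because $d=2$, the vorticity is transported: the solution of~\eqref{euler 2d vorticity} is $\omega(t,\cdot) = \omega_0 \circ X_t^{-1}$ with $X_t$ the volume-preserving flow of~\eqref{Flow Map ODE}, so $\norm{\omega(t)}{L^p} = \norm{\omega_0}{L^p}$ for all $t$, and hence $\norm{\omega(t)}{\dot B^0_{\infty,1}} \le \norm{\omega(t)}{B^0_{\infty,1}} \lesssim \norm{\omega_0}{L^p} + \norm{\omega(t)}{\dot B^0_{\infty,1}}$ by Bernstein's Lemma applied to the low-frequency block.

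Then I would run the nested Grönwall argument exactly as in Corollary~\ref{global well posedness in Besov Spaces}. Theorem~4.2 of~\cite{MV98} gives $\norm{\omega(t)}{B^0_{\infty,1}} \le C\bigl(1 + \log(\norm{\nabla X_t^{-1}}{L^\infty}\norm{\nabla X_t}{L^\infty})\bigr)\norm{\omega_0}{B^0_{\infty,1}}$; differentiating~\eqref{Flow Map ODE} and using the Biot--Savart bound $\norm{\nabla u(\tau)}{L^\infty} \lesssim \norm{\omega_0}{L^p} + \norm{\omega(\tau)}{\dot B^0_{\infty,1}}$ recorded in the proof of Theorem~\ref{bkm triebel lizorkin} gives $\norm{\nabla X_t^{\pm1}}{L^\infty} \le \exp\bigl(C(\norm{\omega_0}{L^p}\,t + \int_0^t \norm{\omega(\tau)}{\dot B^0_{\infty,1}}\,d\tau)\bigr)$, whence $1 + \log(\norm{\nabla X_t^{-1}}{L^\infty}\norm{\nabla X_t}{L^\infty}) \lesssim 1 + \norm{\omega_0}{L^p}\,t + \int_0^t \norm{\omega(\tau)}{\dot B^0_{\infty,1}}\,d\tau$; substituting back and applying Grönwall's Lemma produces $\norm{\omega(t)}{\dot B^0_{\infty,1}} \le C\norm{\omega_0}{B^0_{\infty,1}}\bigl(1 + Ct\norm{\omega_0}{L^p}\bigr)e^{Ct\norm{\omega_0}{B^0_{\infty,1}}}$, which is finite on every bounded time interval. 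Consequently $\int_0^T \norm{\omega(t)}{\dot B^0_{\infty,1}}\,dt < \infty$ for all finite $T$, so Theorem~\ref{bkm triebel lizorkin} extends the local solution of Theorem~\ref{short time euler triebel lizorkin} to $[0,\infty)$, and uniqueness on each $[0,T]$ is the uniqueness already contained in Theorem~\ref{short time euler triebel lizorkin}. I do not expect any real obstacle: the single mild subtlety is the mismatch between the homogeneous $\dot B^0_{\infty,1}$ norm appearing in Theorem~\ref{bkm triebel lizorkin} and the inhomogeneous $B^0_{\infty,1}$ norm in which~\cite{MV98} is phrased, which is absorbed by the conservation of $\norm{\omega(t)}{L^p}$ noted above.
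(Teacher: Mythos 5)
Your proposal is correct and matches the paper's intent exactly: the paper skips the proof of Corollary~\ref{global well posedness in TL Spaces} with the remark that it is identical to that of Corollary~\ref{global well posedness in Besov Spaces}, and you reproduce precisely that argument, swapping Theorem~\ref{short time euler besov} and Theorem~\ref{BKM-type inequality Besov Space} for their Triebel--Lizorkin counterparts (Theorems~\ref{short time euler triebel lizorkin} and~\ref{bkm triebel lizorkin}) and using Proposition~\ref{key triebel lizorkin embedding} and Lemma~\ref{dumb lemma} to check the needed initial regularity of~$\omega_0$. If anything you supply slightly more detail than the paper does, in particular in noting explicitly that the homogeneous and inhomogeneous $B^0_{\infty,1}$ norms of $\omega$ differ only by an $L^p$ term that the conservation of $\norm{\omega(t)}{L^p}$ controls.
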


\appendix 
\section{\\ Calculus Inequalities}
When establishing calculus inequalities in the spaces $B^{s,\psi}_{p,q}(\R^d)$ and $F^{s,\psi}_{p,q}(\R^d)$, we will lean heavily on the fact that $\psi$ is increasing, continuous, and slowly varying. Recall that $\psi$ is slowly varying if 
$$
\lim_{t \to \infty} \frac{\psi(\lambda t)}{\psi(t)}=1 \text{ for all $\lambda >0$.}
$$
In particular, since $\psi$ is continuous, this means that for each $\lambda>0$ there exists some constant $C_{\lambda,\psi}>0$ such that 
$
|\psi(\lambda t)| \leq C_{\lambda,\psi}|\psi(t)|.
$
In particular, if $j-j'\leq l$, using that $\psi$ is increasing and the previous remark yields
\begin{equation}
\label{j j' estimate}
\psi(2^j) \leq \psi(2^{j'+l}) \leq C_{l,\psi} \psi(2^{j'}).
\end{equation}
\subsection{Leibniz Rule}
\begin{lemma}
\label{Leibniz Rule in Besov Spaces}
Let $s>0$, $p,q \in [1,\infty]$ and $\frac{1}{p_1}+\frac{1}{p_2}=\frac{1}{r_1}+\frac{1}{r_2}=\frac{1}{p}$. Then,
$$
\norm{fg}{\dot{B}^{s,\psi}_{p,q}} \lesssim \norm{f}{L^{p_1}}\norm{g}{\dot{B}^{s,\psi}_{p_2,q}} + \norm{g}{L^{r_1}}\norm{f}{\dot{B}^{s,\psi}_{r_2,q}},
$$
and 
$$
\norm{fg}{{B}^{s,\psi}_{p,q}} \lesssim \norm{f}{L^{p_1}}\norm{g}{{B}^{s,\psi}_{p_2,q}} + \norm{g}{L^{r_1}}\norm{f}{{B}^{s,\psi}_{r_2,q}}.
$$
Consequently, if $B^{s,\psi}_{p,q}(\mathbb{R}^d)\hookrightarrow L^\infty(\mathbb{R}^d)$, then $$\norm{fg}{B^{s,\psi}_{p,q}}\lesssim\norm{f}{B^{s,\psi}_{p,q}}\norm{g}{B^{s,\psi}_{p,q}}.$$
\end{lemma}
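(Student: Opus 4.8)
The plan is to decompose the product via the homogeneous Bony paraproduct \eqref{homogeneous paraproduct decomposition}, $fg = \dot{T}_fg + \dot{T}_gf + \dot{R}(f,g)$, to estimate each of the three pieces in $\dot{B}^{s,\psi}_{p,q}(\R^d)$ separately, and then to deduce the nonhomogeneous inequality and the final product estimate as corollaries. The only feature of $\psi$ that will be used is the elementary consequence \eqref{j j' estimate} of slow variation together with monotonicity (Definition \ref{good sv functions}): $\psi(2^j) \lesssim \psi(2^{j'})$ whenever $j - j' \le l$ for any fixed $l$, and $\psi(2^k) \le \psi(2^j)$ whenever $k \le j$.

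For the paraproduct $\dot{T}_fg = \sum_j \dot{S}_{j-2}f\,\dot{\Delta}_jg$, each summand has Fourier transform supported in an annulus of size comparable to $2^j$, so there is a fixed $N_0$ with $\dot{\Delta}_k(\dot{S}_{j-2}f\,\dot{\Delta}_jg) = 0$ unless $|k-j| \le N_0$. By Hölder's inequality and the uniform boundedness of $\dot{S}_{j-2}$ on $L^{p_1}$ (convolution against a fixed rescaled Schwartz kernel), $\norm{\dot{\Delta}_k\dot{T}_fg}{L^p} \lesssim \norm{f}{L^{p_1}}\sum_{|j-k|\le N_0}\norm{\dot{\Delta}_jg}{L^{p_2}}$; multiplying by $2^{ks}\psi(2^k)$, replacing this weight by a constant times $2^{js}\psi(2^j)$ via \eqref{j j' estimate} (the step using slow variation), and taking the $\ell^q_k$ norm of the resulting finite convolution gives $\norm{\dot{T}_fg}{\dot{B}^{s,\psi}_{p,q}} \lesssim \norm{f}{L^{p_1}}\norm{g}{\dot{B}^{s,\psi}_{p_2,q}}$. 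The term $\dot{T}_gf$ is estimated the same way with the roles of $f,g$ interchanged and $p_1,p_2$ replaced by $r_1,r_2$, yielding $\norm{g}{L^{r_1}}\norm{f}{\dot{B}^{s,\psi}_{r_2,q}}$.

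For the remainder I write $\dot{R}(f,g) = \sum_j \dot{\Delta}_jf\,(\dot{\Delta}_{j-1} + \dot{\Delta}_j + \dot{\Delta}_{j+1})g$; each summand now has Fourier transform in a ball of radius comparable to $2^j$, so $\dot{\Delta}_k$ annihilates it unless $j \ge k - N_1$ for a fixed $N_1$. Hölder gives $\norm{\dot{\Delta}_k\dot{R}(f,g)}{L^p} \lesssim \norm{f}{L^{p_1}}\sum_{j\ge k - N_1}\norm{\dot{\Delta}_jg}{L^{p_2}}$ (up to the harmless fattening of $\dot{\Delta}_j$). Multiplying by $2^{ks}\psi(2^k)$ and writing the $j$-th term as $2^{(k-j)s}\,\tfrac{\psi(2^k)}{\psi(2^j)}\,\big(2^{js}\psi(2^j)\norm{\dot{\Delta}_jg}{L^{p_2}}\big)$, the factor $\psi(2^k)/\psi(2^j)$ is $\lesssim 1$ (by monotonicity when $j \ge k$, and by \eqref{j j' estimate} for the finitely many $j$ with $k - N_1 \le j < k$), while $\sum_{j\ge k - N_1}2^{(k-j)s} < \infty$ precisely because $s > 0$; Young's inequality for sequences then yields $\norm{\dot{R}(f,g)}{\dot{B}^{s,\psi}_{p,q}} \lesssim \norm{f}{L^{p_1}}\norm{g}{\dot{B}^{s,\psi}_{p_2,q}}$. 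Summing the three bounds proves the homogeneous inequality. The nonhomogeneous one follows from the equivalent norm $\norm{\cdot}{L^p} + \norm{\cdot}{\dot{B}^{s,\psi}_{p,q}}$ recorded after Definition \ref{Generalized function spaces}, together with the crude bound $\norm{fg}{L^p} \le \norm{f}{L^{p_1}}\norm{g}{L^{p_2}} \lesssim \norm{f}{L^{p_1}}\norm{g}{B^{s,\psi}_{p_2,q}}$, where $B^{s,\psi}_{p_2,q}(\R^d) \hookrightarrow L^{p_2}(\R^d)$ holds since $s > 0$ and $\psi$ is bounded below (dominate $\sum_{j\ge 0}\norm{\Delta_jg}{L^{p_2}}$ by Hölder against the summable sequence $(2^{-js}\psi(2^j)^{-1})_j$). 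Finally, taking $p_1 = r_1 = \infty$ and $p_2 = r_2 = p$ and invoking $B^{s,\psi}_{p,q}(\R^d) \hookrightarrow L^\infty(\R^d)$ gives $\norm{fg}{B^{s,\psi}_{p,q}} \lesssim \norm{f}{L^\infty}\norm{g}{B^{s,\psi}_{p,q}} + \norm{g}{L^\infty}\norm{f}{B^{s,\psi}_{p,q}} \lesssim \norm{f}{B^{s,\psi}_{p,q}}\norm{g}{B^{s,\psi}_{p,q}}$.

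The main obstacle is the remainder term: one must use $s > 0$ to sum the tail $\sum_{j\ge k}2^{(k-j)s}$ and, simultaneously, control $\psi(2^k)/\psi(2^j)$ uniformly over $k \le j$. The latter is exactly where the monotonicity hypothesis on $\psi$ in Definition \ref{good sv functions} is needed rather than mere slow variation, since slow variation alone only furnishes comparability of $\psi(2^k)$ and $\psi(2^j)$ when $|k - j|$ is bounded — which suffices for the two paraproduct pieces but not for the remainder.
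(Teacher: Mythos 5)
Your proof is correct and takes essentially the same route as the paper's: the homogeneous Bony decomposition, Hölder in $L^p$ on each piece with the support localization, transfer of the weight $2^{ks}\psi(2^k)$ to $2^{js}\psi(2^j)$ via slow variation (for the two paraproduct pieces) plus monotonicity of $\psi$ and $s>0$ (for the remainder), and Young's inequality for sequences. The only differences are cosmetic: you spell out the $\psi(2^k)/\psi(2^j)\lesssim 1$ bound and the $B^{s,\psi}_{p_2,q}\hookrightarrow L^{p_2}$ embedding explicitly, whereas the paper absorbs the former silently into a Young's-inequality step and handles the $L^p$ contribution by symmetrizing the Hölder bound.
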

\begin{proof}
We first consider the homogeneous spaces $\dot{B}^{s,\psi}_{p,q}(\R^d)$. We use Bony's paraproduct decomposition to write $fg=\dot{T}_fg+\dot{T}_gf+\dot{R}(f,g)$. Then 
\begin{align*}
\dot{\Delta}_j(fg) &= \dot{\Delta}_j(\dot{T}_fg+\dot{T}_gf+\dot{R}(f,g))\\
&= \sum_{j'\in \mathbb{Z}}\dot{\Delta}_j (\dot{S}_{j'-2}f\dot{\Delta}_{j'}g) + \sum_{j'\in \mathbb{Z}} \dot{\Delta}_j (\dot{S}_{j'-2}g\dot{\Delta}_{j'}f)
 + \sum_{|i'-j'|\leq 1} \dot{\Delta}_j (\dot{\Delta}_{i'}f\dot{\Delta}_{j'}g).\\
&\eqdef I + II + III.
\end{align*}
Due to the support of the Littlewood-Paley operators we see that 
\begin{equation}
\label{besov leibniz est 1} 
\norm{I}{L^p} \lesssim \sum_{|j-j'| \leq 4} \norm{\dot{S}_{j'-2}f}{L^{p_1}}\norm{\dot{\Delta}_jg}{L^{p_2}} \lesssim \norm{f}{L^{p_1}}\sum_{|j-j'| \leq 4}\norm{\dot{\Delta}_jg}{L^{p_2}}.
\end{equation}
Similarly, we see that 

\begin{equation}
\label{besov leibniz est 2} 
\norm{II}{L^p} \lesssim \sum_{|j-j'| \leq 4} \norm{\dot{S}_{j'-2}g}{L^{r_1}}\norm{\dot{\Delta}_jf}{L^{r_2}} \lesssim \norm{g}{L^{r_1}}\sum_{|j-j'| \leq 4}\norm{\dot{\Delta}_jf}{L^{r_2}},
\end{equation}
and 
\begin{equation}
\label{besov leibniz est 3}
\norm{III}{L^p} \lesssim \sum_{j' \geq j -5}\norm{f}{L^{p_1}}\norm{\dot{\Delta}_{j'}g}{L^{p_2}}.
\end{equation}
Then, by Minkowski's inequality we see that 
\begin{align*}
\label{besov leibniz est 4}
\norm{fg}{\dot{B}^{s,\psi}_{p,q}}&= \left(\sum_{j \in \mathbb{Z}}\left( 2^{js}\psi(2^j) \norm{\dot{\Delta}_j (fg)}{L^p}\right)^q\right)^{\frac{1}{q}}\lesssim \norm{f}{L^{p_1}}\left(\sum_{j \in \mathbb{Z}}\left(\sum_{|j-j'|\leq 4} 2^{js}\psi(2^j) \norm{\dot{\Delta}_{j'}g}{L^{p_2}}\right)^q\right)^{\frac{1}{q}} \\
& \hspace{-1.5cm}+\norm{g}{L^{r_1}}\left(\sum_{j \in \mathbb{Z}}\left(\sum_{|j-j'| \leq 4} 2^{js}\psi(2^j) \norm{\dot{\Delta}_{j'}f}{L^{r_2}}\right)^q\right)^{\frac{1}{q}} + \norm{f}{L^{p_1}}\left(\sum_{j \in \mathbb{Z}} \left(\sum_{j' \geq j-5} 2^{js}\psi(2^j) \norm{\dot{\Delta}_{j'}g}{L^{p_2}}\right)^q\right)^{\frac{1}{q}}\\ &\eqdef \{I\} + \{II\} + \{III\}.
\end{align*}
By Young's convolution inequality for series we estimate
\begin{align*}
\{I\} &= \norm{f}{L^{p_1}}\left(\sum_{j \in \mathbb{Z}}\left(\sum_{|j-j'|\leq 4} 2^{js}\psi(2^j)\norm{\dot{\Delta}_{j'}g}{L^{p_2}}\right)^q\right)^{\frac{1}{q}}\\
&\lesssim \norm{f}{L^{p_1}}\left(\sum_{j \in \mathbb{Z}}\left(\sum_{j' \in \mathbb{Z}} \chi_{|j-j'|\leq 4}2^{(j-j')s}\psi(2^{j'}) 2^{j's}\norm{\dot{\Delta}_{j'}g}{L^{p_2}}\right)^q\right)^{\frac{1}{q}}\\
&\leq \left(\sum_{j \in \mathbb{Z}}\chi_{|j|\leq 4} 2^{js}\right)\norm{f}{L^{p_1}}\norm{g}{\dot{B}^{s,\psi}_{p_2,q}}\\
&\lesssim \norm{f}{L^{p_1}}\norm{g}{\dot{B}^{s,\psi}_{p_2,q}}
\end{align*}
An identical argument shows
$$
\{II\} \lesssim \norm{g}{L^{r_1}}\norm{f}{\dot{B}^{s,\psi}_{r_2,q}}.
$$
It remains to estimate the third term. Using the fact $\psi$ is increasing with Young's convolution inequality we have
\begin{align*}
\{III\}&=\norm{f}{L^{p_1}}\left(\sum_{j \in \mathbb{Z}} \left(\sum_{j' \geq j-5} 2^{js}\psi(2^j) \norm{\dot{\Delta}_{j'}g}{L^{p_2}}\right)^q\right)^{\frac{1}{q}}\\
&\leq \norm{f}{L^{p_1}}\left(\sum_{j \in \mathbb{Z}} \left(\sum_{j \in \mathbb{Z}} \chi_{\{j-j'\leq 5\}}2^{(j-j')s}\psi(2^{j'})2^{j's} \norm{\dot{\Delta}_{j'}g}{L^{p_2}}\right)^q\right)^{\frac{1}{q}}\\
&\leq
\left(\sum_{j \in \mathbb{Z}} \chi_{\{j \leq 5\}}2^{js}\right) \norm{f}{L^{p_1}}\norm{g}{\dot{B}^{s,\psi}_{p_2,q}}\\
&\lesssim \norm{f}{L^{p_1}}\norm{g}{\dot{B}^{s,\psi}_{p_2,q}}.
\end{align*}
We have proven
$$
\norm{fg}{\dot{B}^{s,\psi}_{p,q}} \lesssim \norm{f}{L^{p_1}}\norm{g}{\dot{B}^{s,\psi}_{p_2,q}}+ \norm{g}{L^{r_1}}\norm{f}{\dot{B}^{s,\psi}_{r_2,q}}.
$$
The nonhomogeneous inequality is obtained by adding the following estimate
$$
\norm{fg}{L^p} \leq \frac{1}{2}\left(\norm{f}{L^{p_1}}\norm{g}{L^{p_2}} + \norm{g}{L^{r_1}}\norm{f}{L^{r_2}}\right).
$$
\end{proof}
We next establish a Leibniz type rule in the spaces of Triebel-Lizorkin type.
\begin{lemma}
\label{Leibniz rule TL}
Suppose that $s>0$ and  $p,q \in [1,\infty]$. Then the following inequalities hold:
$$
\norm{fg}{\dot{F}^{s,\psi}_{p,q}} \lesssim \norm{f}{L^{\infty}}\norm{g}{\dot{F}^{s,\psi}_{p,q}} + \norm{g}{L^{\infty}}\norm{f}{\dot{F}^{s,\psi}_{p,q}},
$$
and
$$
\norm{fg}{F^{s,\psi}_{p,q}} \lesssim \norm{f}{L^{\infty}}\norm{g}{F^{s,\psi}_{p,q}} + \norm{g}{L^{\infty}}\norm{f}{F^{s,\psi}_{p,q}}.
$$
\end{lemma}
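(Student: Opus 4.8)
The plan is to follow the scheme of the proof of Lemma~\ref{Leibniz Rule in Besov Spaces}, but since the $L^p(l^q)$ structure of Triebel--Lizorkin spaces does not accommodate Hölder's inequality across different Lebesgue exponents, I would replace those splittings by pointwise estimates combined with the convolution--maximal function bound of Lemma~\ref{convolution bound by maximal function} and the Fefferman--Stein vector-valued maximal inequality (Lemma~\ref{vector maximal inequality}). First I would reduce to the homogeneous inequality: because $\norm{\cdot}{L^p}+\norm{\cdot}{\dot F^{s,\psi}_{p,q}}$ is an equivalent norm on $F^{s,\psi}_{p,q}(\R^d)$ and $\norm{fg}{L^p}\le\norm{f}{L^\infty}\norm{g}{L^p}$, the nonhomogeneous bound follows immediately from the homogeneous one. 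Then I would invoke the homogeneous Bony decomposition~\eqref{homogeneous paraproduct decomposition}, $fg=\dot T_f g+\dot T_g f+\dot R(f,g)$, and estimate each of the three pieces in $\dot F^{s,\psi}_{p,q}(\R^d)$ separately.

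For the paraproduct $\dot T_f g=\sum_{j'}\dot S_{j'-2}f\,\dot\Delta_{j'}g$, the spectral support localizes the output so that $\dot\Delta_j(\dot T_f g)=\sum_{|j-j'|\le 4}\dot\Delta_j(\dot S_{j'-2}f\,\dot\Delta_{j'}g)$; using $\norm{\dot S_{j'-2}f}{L^\infty}\lesssim\norm{f}{L^\infty}$ (a bounded-kernel convolution) I obtain the pointwise bound $|\dot\Delta_j(\dot T_f g)(x)|\lesssim\norm{f}{L^\infty}\sum_{|j-j'|\le 4}|\dot\Delta_{j'}g(x)|$. Multiplying by $2^{js}\psi(2^j)$ and using that $2^{js}\psi(2^j)\lesssim 2^{j's}\psi(2^{j'})$ when $|j-j'|\le 4$ — which is \eqref{j j' estimate} together with $2^{(j-j')s}\le 2^{4s}$ — the $l^q_j$ norm of a fixed finite sum controls the $l^q_{j'}$ norm of the individual pieces, and taking the $L^p$ norm gives $\norm{\dot T_f g}{\dot F^{s,\psi}_{p,q}}\lesssim\norm{f}{L^\infty}\norm{g}{\dot F^{s,\psi}_{p,q}}$. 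Exchanging the roles of $f$ and $g$ yields $\norm{\dot T_g f}{\dot F^{s,\psi}_{p,q}}\lesssim\norm{g}{L^\infty}\norm{f}{\dot F^{s,\psi}_{p,q}}$.

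For the remainder $\dot R(f,g)=\sum_{|i-i'|\le 1}\dot\Delta_i f\,\dot\Delta_{i'}g$, each summand has spectrum in a ball of radius $\sim 2^i$, so $\dot\Delta_j\dot R(f,g)=\sum_{i\ge j-3}\sum_{|i-i'|\le 1}\dot\Delta_j(\dot\Delta_i f\,\dot\Delta_{i'}g)$. Bounding $|\dot\Delta_j(\cdot)|$ by a maximal function via Lemma~\ref{convolution bound by maximal function} and placing the $L^\infty$ norm on the $g$-factor ($\norm{\dot\Delta_{i'}g}{L^\infty}\lesssim\norm{g}{L^\infty}$) gives $|\dot\Delta_j\dot R(f,g)(x)|\lesssim\norm{g}{L^\infty}\sum_{i\ge j-3}\mathcal M(|\dot\Delta_i f|)(x)$. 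Multiplying by $2^{js}\psi(2^j)$, I use $\psi(2^j)\lesssim\psi(2^i)$ for $i\ge j-3$ (again \eqref{j j' estimate}) and write $2^{js}=2^{(j-i)s}2^{is}$, where $2^{(j-i)s}$ is summable over $i\ge j-3$ precisely because $s>0$; a discrete Young inequality in $j$ then yields $\bigl\|\,2^{js}\psi(2^j)|\dot\Delta_j\dot R(f,g)|\,\bigr\|_{l^q_j}\lesssim\norm{g}{L^\infty}\bigl\|\,2^{is}\psi(2^i)\mathcal M(|\dot\Delta_i f|)\,\bigr\|_{l^q_i}$, and taking the $L^p$ norm and applying the vector-valued maximal inequality (Lemma~\ref{vector maximal inequality}) to strip off $\mathcal M$ produces $\norm{\dot R(f,g)}{\dot F^{s,\psi}_{p,q}}\lesssim\norm{g}{L^\infty}\norm{f}{\dot F^{s,\psi}_{p,q}}$. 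Summing the three contributions completes the homogeneous estimate.

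I expect the remainder term to be the main obstacle, for two reasons. First, $\dot\Delta_j\dot R(f,g)$ collects contributions from all frequencies $i\gtrsim j$, so summability rests on the geometric factor $2^{(j-i)s}$ and hence on the hypothesis $s>0$; this is the analogue of the high--high interaction difficulty already present in the classical spaces. Second, the pointwise maximal-function bound forces one to commute the $l^q$ summation past the $L^p$ integration, so the Fefferman--Stein inequality is indispensable, and this is where the effective restriction on $p,q$ (namely $p,q\in(1,\infty)$, the range used in all applications in this paper) enters. A minor bookkeeping point is to ensure that the slow-variation constants $C_{l,\psi}$ from \eqref{j j' estimate} do not accumulate; since every sum appearing is either over a fixed finite range or geometrically convergent, a single such constant suffices in each estimate.
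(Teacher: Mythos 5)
Your high-level strategy (homogeneous Bony decomposition, paraproducts versus remainder, the geometric summability from $s>0$ for the remainder, and reducing to the homogeneous estimate by adding $\norm{fg}{L^p}\le\norm{f}{L^\infty}\norm{g}{L^p}$) matches the paper's. But the mechanism you use to control the $L^p(\ell^q)$ norm is genuinely different: the paper expands the convolution defining $\dot\Delta_j$, changes variables to write $\dot\Delta_j h(x)=\int\varphi(z)\,h(x-2^{-j}z)\,dz$, pulls the $L^p(\ell^q)$ norm inside with Minkowski's integral inequality, and then uses translation invariance of $L^p$; no maximal function appears. That route is valid for the whole range $p,q\in[1,\infty]$ asserted in the lemma. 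Your route, through Lemma~\ref{convolution bound by maximal function} and the Fefferman--Stein inequality (Lemma~\ref{vector maximal inequality}), only covers $p,q\in(1,\infty)$ (or $p=q=\infty$). You flag this honestly, and it suffices for every application in the paper, but as a proof of the lemma as stated it is strictly weaker than the paper's.

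There is also a concrete error in your paraproduct estimate. The claimed pointwise bound
$$
|\dot\Delta_j(\dot T_f g)(x)|\lesssim\norm{f}{L^{\infty}}\sum_{|j-j'|\le 4}|\dot\Delta_{j'}g(x)|
$$
does not hold: $\dot\Delta_j$ acting on the product $\dot S_{j'-2}f\,\dot\Delta_{j'}g$ is a convolution, so all you get pointwise is $\norm{f}{L^\infty}\,\mathcal M(|\dot\Delta_{j'}g|)(x)$, exactly the device you invoke for the remainder. Once corrected, your paraproduct estimate also requires the Fefferman--Stein inequality; the argument then closes, but the $p,q\in(1,\infty)$ restriction applies to all three pieces, not only to $\dot R(f,g)$. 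If you want the full $[1,\infty]$ range, replace the maximal-function step by the paper's convolution-expansion-plus-Minkowski-plus-translation-invariance argument.
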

\begin{proof}
As before, we first consider the homogeneous spaces $\dot{F}^{s,\psi}_{p,q}(\R^d)$. Again, we use Bony's paraproduct decomposition to write $fg=\dot{T}_fg+\dot{T}_gf+\dot{R}(f,g)$. Then
\begin{align*}
\dot{\Delta}_j(fg) &= \dot{\Delta}_j(\dot{T}_fg+\dot{T}_gf+\dot{R}(f,g))\\
&= \sum_{j'\in \mathbb{Z}}\dot{\Delta}_j (\dot{S}_{j'-2}f\dot{\Delta}_{j'}g) + \sum_{j' \in \mathbb{Z}} \dot{\Delta}_j (\dot{S}_{j'-2}g\dot{\Delta}_{j'}f)
 + \sum_{|i'-j'|\leq 1} \dot{\Delta}_j (\dot{\Delta}_{i'}f\dot{\Delta}_{j'}g).\\
&\eqdef I + II + III.
\end{align*}
Due to the support of the $\dot{\Delta}_j$ operators and Minkowski's inequality  we see that 
\begin{align*}
&\norm{fg}{\dot{F}^{s,\psi}_{p,q}} \lesssim \norm{\left(\sum_{j \in \mathbb{Z}}\left(\sum_{|j-j'| \leq 4} 2^{js}\psi(2^j)|\dot{\Delta}_j(\dot{S}_{j'-2}f\dot{\Delta}_{j'}g)|\right)^q\right)^\frac{1}{q}}{L^p}\\
&+
\norm{\left(\sum_{j \in \mathbb{Z}}\left(\sum_{|j-j'| \leq 4} 2^{js}\psi(2^j)|\dot{\Delta}_j(\dot{S}_{j'-2}g\dot{\Delta}_{j'}f)|\right)^q\right)^\frac{1}{q}}{L^p}\\ &+ \norm{\left(\sum_{j \in \mathbb{Z}}\left( \sum_{\substack {|i'-j'| \leq 1 \\ \max\{i',j'\} \geq j-5}} 2^{js}\psi(2^j) |\dot{\Delta}_j(\dot{\Delta}_{i'}f \dot{\Delta}_{j'}g )|\right)^q \right)^{\frac{1}{q}}}{L^p}\\
&=\{I\}+\{II\}+\{III\}.
\end{align*}
We estimate $\{I\}$ as follows:
\begin{align*}
\{I\}&=\norm{\left(\sum_{j \in \mathbb{Z}}\left(\sum_{|j-j'| \leq 4} 2^{js}\psi(2^j)|\dot{\Delta}_j(\dot{S}_{j'-2}f\dot{\Delta}_{j'}g)|\right)^q\right)^\frac{1}{q}}{L^p}\\
&= \norm{\left(\sum_{j \in \mathbb{Z}}\left(\sum_{|j-j'| \leq 4} 2^{js}\psi(2^j)\left|2^{jd}\int_{\R^d} \varphi(2^{j}(\cdot-y))(\dot{S}_{j'-2}f\dot{\Delta}_{j'}g)(y)dy \right|\right)^q\right)^\frac{1}{q}}{L^p}\\
&= \norm{\left(\sum_{j \in \mathbb{Z}}\left(\sum_{|j-j'| \leq 4} 2^{js}\psi(2^j)\left|\int_{\R^d} \varphi(z)(\dot{S}_{j'-2}f\dot{\Delta}_{j'}g)(\cdot-2^{-j}z)dz \right|\right)^q\right)^\frac{1}{q}}{L^p}\\
&\leq
\norm{\left( \sum_{j \in \mathbb{Z}}\left( \int_{\R^d} \sum_{j' \in \mathbb{Z}} \chi_{\{|j-j'| \leq 4\}} 2^{js}\psi(2^j) \left|\varphi(z) (\dot{S}_{j'-2}f\dot{\Delta}_jg)(\cdot-2^{-j}z)\right|dz \right)^q \right)^{\frac{1}{q}}}{L^p}\\
&\leq
\norm{\int_{\R^d} \left( \sum_{j \in \mathbb{Z}} \left(\sum_{j' \in \mathbb{Z}} \chi_{\{|j-j'| \leq 4\}} 2^{js}\psi(2^j) \left|\varphi(z) (\dot{S}_{j'-2}f\dot{\Delta}_jg)(\cdot-2^{-j}z)\right| \right)^q\right)^{\frac{1}{q}}dz}{L^p}\\
&\lesssim
\norm{f}{L^{\infty}}\norm{\int_{\R^d}|\varphi(z)| \left( \sum_{j \in \mathbb{Z}} \left(\sum_{j' \in \mathbb{Z}} \chi_{\{|j-j'| \leq 4\}} 2^{js}\psi(2^j) \left|(\dot{\Delta}_jg)(\cdot-2^{-j}z)\right| \right)^q\right)^{\frac{1}{q}}dz}{L^p}\\
&
\leq
\norm{f}{L^{\infty}}\int_{\R^d}|\varphi(z)| \norm{\left( \sum_{j \in \mathbb{Z}} \left(\sum_{j' \in \mathbb{Z}} \chi_{\{|j-j'| \leq 4\}} 2^{js}\psi(2^j) \left|(\dot{\Delta}_jg)(\cdot-2^{-j}z)\right| \right)^q\right)^{\frac{1}{q}}}{L^p}dz\\
&\lesssim 
\norm{f}{L^{\infty}}\norm{\left( \sum_{j \in \mathbb{Z}} \left(\sum_{j' \in \mathbb{Z}} \chi_{\{|j-j'| \leq 4\}} 2^{js}\psi(2^j) \left|\dot{\Delta}_jg(\cdot)\right| \right)^q\right)^{\frac{1}{q}}}{L^p},
\end{align*}
where in the last inequality we used the translation invariance of the $L^p$ norm. Now, observe that by Young's convolution inequality for series 
\begin{align*}
&\norm{f}{L^{\infty}}\norm{\left( \sum_{j \in \mathbb{Z}} \left(\sum_{j' \in \mathbb{Z}} \chi_{\{|j-j'| \leq 4\}} 2^{js}\psi(2^j) \left|\dot{\Delta}_jg(\cdot)\right| \right)^q\right)^{\frac{1}{q}}}{L^p} \\&\lesssim \norm{f}{L^{\infty}}\norm{\left( \sum_{j \in \mathbb{Z}} \left(\sum_{j' \in \mathbb{Z}} \chi_{\{|j-j'| \leq 4\}} 2^{(j-j')s}2^{j's}\psi(2^{j'}) \left|\dot{\Delta}_jg(\cdot)\right| \right)^q\right)^{\frac{1}{q}}}{L^p}\\
&\leq \left(\sum_{|j| \leq 4} 2^{js}\right)\norm{f}{L^{\infty}} \norm{g}{\dot{F}^{s,\psi}_{p,q}}.
\end{align*}
Thus, $\{I\} \lesssim \norm{f}{L^{\infty}}\norm{g}{\dot{F}^{s,\psi}_{p,q}}$. An identical argument yields
$$
\{II\} \lesssim \norm{g}{L^{\infty}}\norm{f}{\dot{F}^{s,\psi}_{p,q}}.
$$
It remains to estimate $\{III\}$. We have that 
\begin{align*}
\{III\} &=  \norm{\left(\sum_{j \in \mathbb{Z}}\left( \sum_{\substack {|i'-j'| \leq 1 \\ \max\{i',j'\} \geq j-5}} 2^{js}\psi(2^j) |\dot{\Delta}_j(\dot{\Delta}_{i'}f \dot{\Delta}_{j'}g) |\right)^q \right)^{\frac{1}{q}}}{L^p}\\
&= \norm{\left(\sum_{j \in \mathbb{Z}}\left( \sum_{\substack {|i'-j'| \leq 1 \\ \max\{i',j'\} \geq j-5}} 2^{js}\psi(2^j) \left|2^{jd}\int_{\R^d}\varphi(2^{j}(\cdot-y))(\dot{\Delta}_{i'}f \dot{\Delta}_{j'}g)(y) dy \right|\right)^q \right)^{\frac{1}{q}}}{L^p}\\
&=\norm{\left(\sum_{j \in \mathbb{Z}}\left( \sum_{\substack {|i'-j'| \leq 1 \\ \max\{i',j'\} \geq j-5}} 2^{js}\psi(2^j) \left|\int_{\R^d}\varphi(z)(\dot{\Delta}_{i'}f \dot{\Delta}_{j'}g)(\cdot-2^{-j}z) dz \right|\right)^q \right)^{\frac{1}{q}}}{L^p}\\
&\lesssim 
\norm{f}{L^{\infty}}\norm{\left(\sum_{j \in \mathbb{Z}}\left( \int_{\R^d} \left|\sum_{j'\in \mathbb{Z}} \chi_{\{j'-j \geq 5\}}2^{js}\psi(2^j) \varphi(z)(\dot{\Delta}_{j'}g)(\cdot-2^{-j}z) \right|dz\right)^q \right)^{\frac{1}{q}}}{L^p}\\
&\leq 
\norm{f}{L^{\infty}} \norm{\int_{\R^d} |\varphi(z)|\left( \sum_{j \in \mathbb{Z}} \left( \sum_{j' \in \mathbb{Z}} \chi_{\{j'-j \geq 5\}}2^{js}\psi(2^j) |(\dot{\Delta}_{j'}g)(\cdot-2^{-j}z)|\right)^q\right)^{\frac{1}{q}}dz}{L^p}\\
&\leq
\norm{f}{L^{\infty}} \int_{\R^d} |\varphi(z)|\norm{\left( \sum_{j \in \mathbb{Z}} \left( \sum_{j' \in \mathbb{Z}} \chi_{\{j'-j \geq 5\}}2^{js}\psi(2^j) |(\dot{\Delta}_{j'}g)(\cdot-2^{-j}z)|\right)^q\right)^{\frac{1}{q}}}{L^p}dz\\
&\lesssim
\norm{f}{L^{\infty}}\norm{\left( \sum_{j \in \mathbb{Z}} \left( \sum_{j' \in \mathbb{Z}} \chi_{\{j'-j \geq 5\}}2^{js}\psi(2^j)|(\dot{\Delta}_{j'}g)(\cdot)|\right)^q\right)^{\frac{1}{q}}}{L^p},\\
\end{align*}
where in the last inequality we used the translation invariance of $L^p$. Applying Young's convolution inequality for series once again yields 
\begin{align*}
&\norm{f}{L^{\infty}}\norm{\left( \sum_{j \in \mathbb{Z}} \left( \sum_{j' \in \mathbb{Z}} \chi_{\{j'-j \geq 5\}}2^{js}\psi(2^j) |\dot{\Delta}_{j'}g(\cdot)|\right)^q\right)^{\frac{1}{q}}}{L^p}\\ &\lesssim \norm{f}{L^{\infty}}\norm{\left( \sum_{j \in \mathbb{Z}} \left( \sum_{j' \in \mathbb{Z}} \chi_{\{j'-j \geq 5\}}2^{(j-j')s} 2^{j's}\psi(2^{j'})|\dot{\Delta}_{j'}g(\cdot)|\right)^q\right)^{\frac{1}{q}}}{L^p}\\
&\lesssim \norm{f}{L^{\infty}}\norm{g}{\dot{F}^{s,\psi}_{p,q}}.
\end{align*}
We have shown 
$$
\norm{fg}{\dot{F}^{s,\psi}_{p,q}} \lesssim \norm{f}{L^{\infty}}\norm{g}{\dot{F}^{s,\psi}_{p,q}}+ \norm{g}{L^{\infty}}\norm{f}{\dot{F}^{s,\psi}_{p,q}}. 
$$
To obtain the nonhomogeneous estimate we add the following inequality
$$
\norm{fg}{L^p} \leq \frac{1}{2}\left(\norm{f}{L^{\infty}}\norm{g}{L^p} + \norm{g}{L^{\infty}}\norm{f}{L^p}\right). 
$$
\end{proof}
\subsection{Commutator Estimates}

\begin{proposition}
\label{Triebel Lizorkin Commutator Estimate}
Let $(p,q) \in (1,\infty)\times(1,\infty]$, or $p=q=\infty$. Let $u$ be a divergence free vector field and $\omega = \text{curl}(u)$. Then for $s>0$
\begin{equation}
\label{Bad TLCE}
\norm{\norm{2^{js}\psi(2^j)([u\cdot\nabla,\dot{\Delta}_j]\omega)}{l^q(\mathbb{Z})}}{L^p} \lesssim \norm{\nabla u}{L^{\infty}}\norm{\omega}{F^{s,\psi}_{p,q}}+\norm{\nabla \omega}{L^{\infty}}\norm{u}{F^{s,\psi}_{p,q}}.
\end{equation}
And for $s > -1$
\begin{equation}
\label{Good TLCE}
\norm{\norm{2^{js}\psi(2^j)([u\cdot\nabla,\dot{\Delta}_j]\omega)}{l^q(\mathbb{Z})}}{L^p} \lesssim \norm{\nabla u}{L^{\infty}}\norm{\omega}{F^{s,\psi}_{p,q}}+\norm{\omega}{L^{\infty}}\norm{\nabla u}{F^{s,\psi}_{p,q}}.
\end{equation}
\end{proposition}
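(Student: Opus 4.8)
The plan is to follow the classical commutator argument (see \cite{CH95}) while carrying the slowly varying weight $\psi(2^j)$ through every frequency sum and routing every pointwise-in-$x$ step through the maximal function. By the equivalent-norm remark it suffices to bound the left-hand side by the homogeneous quasinorms $\dot F^{s,\psi}_{p,q}$, so first I would use $\operatorname{div}u=0$ to rewrite
\[
\bigl[u\cdot\nabla,\dot\Delta_j\bigr]\omega = u^k\,\dot\Delta_j\partial_k\omega - \dot\Delta_j(u^k\partial_k\omega)
\]
(summation over $k$), and insert the homogeneous Bony decomposition \eqref{homogeneous paraproduct decomposition} into each of the products $u^k(\dot\Delta_j\partial_k\omega)$ and $u^k(\partial_k\omega)$. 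Collecting the pieces by their low/high roles expresses the commutator as a sum of five families,
\[
\bigl[\dot T_{u^k},\dot\Delta_j\bigr]\partial_k\omega,\quad \dot T_{\dot\Delta_j\partial_k\omega}u^k,\quad \dot\Delta_j\dot T_{\partial_k\omega}u^k,\quad \dot R(u^k,\dot\Delta_j\partial_k\omega),\quad \dot\Delta_j\dot R(u^k,\partial_k\omega),
\]
and I would estimate each one as in the $\dot B^s_{p,q}$ case, with two systematic changes: the weight $2^{js}$ is replaced by $2^{js}\psi(2^j)$, and every pointwise step is passed through the Hardy--Littlewood maximal function via Lemma \ref{convolution bound by maximal function} and then cleaned up with the vector-valued maximal inequality of Lemma \ref{vector maximal inequality} --- which is exactly where the hypothesis $(p,q)\in(1,\infty)^2$ (or $p=q=\infty$) is needed.

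For the ``commutator-of-a-paraproduct'' family the supports force $|j-j'|\le 4$, and I would use the standard representation
\[
\bigl[\dot S_{j'-2}u^k,\dot\Delta_j\bigr](\partial_k\dot\Delta_{j'}\omega)(x) = 2^{jd}\!\int \varphi\bigl(2^j(x-y)\bigr)\bigl(\dot S_{j'-2}u^k(y)-\dot S_{j'-2}u^k(x)\bigr)(\partial_k\dot\Delta_{j'}\omega)(y)\,dy,
\]
bound the difference by $|x-y|\,\|\nabla u\|_{L^\infty}$, absorb the resulting $2^{-j}$ against $\partial_k\dot\Delta_{j'}$ by Bernstein's Lemma, and dominate the convolution against the $L^1$ kernels $|z||\varphi(z)|$ and $|\varphi(z)|$ pointwise by $\mathcal{M}(\dot\Delta_{j'}\omega)$. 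After multiplying by $2^{js}\psi(2^j)$, the bounded index shift is absorbed using \eqref{j j' estimate} (so $\psi(2^j)\lesssim\psi(2^{j'})$) together with Young's inequality for series, and Lemma \ref{vector maximal inequality} then gives the bound $\|\nabla u\|_{L^\infty}\|\omega\|_{\dot F^{s,\psi}_{p,q}}$. The two reverse-paraproduct families $\dot T_{\dot\Delta_j\partial_k\omega}u^k$ and $\dot\Delta_j\dot T_{\partial_k\omega}u^k$ are similar; keeping the derivative on $\omega$ produces $\|\nabla\omega\|_{L^\infty}\|u\|_{\dot F^{s,\psi}_{p,q}}$ for \eqref{Bad TLCE}, whereas for \eqref{Good TLCE} I would instead keep $\partial_k$ outside the paraproduct and use the Bernstein equivalence $\|\dot\Delta_{j'}\nabla u\|\sim 2^{j'}\|\dot\Delta_{j'}u\|$ together with the Biot--Savart relation $\dot\Delta_{j'}u=\dot\Delta_{j'}\operatorname{div}((-\Delta)^{-1}\omega)$ to shift the derivative from $\omega$ onto $u$ at each scale, getting $\|\omega\|_{L^\infty}\|\nabla u\|_{\dot F^{s,\psi}_{p,q}}$.

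The remainder families are where the restriction on $s$ enters and where I expect the main difficulty. In $\dot\Delta_j\dot R(u^k,\partial_k\omega)$ the summand $\dot\Delta_{j'}u^k\,\dot\Delta_{j''}\partial_k\omega$ (with $|j'-j''|\le 1$) survives only for $j'\gtrsim j$, so the weight $2^{js}\psi(2^j)$ must be pushed past an unbounded upward shift; this is precisely where the hypothesis that $\psi$ is non-decreasing is used, giving $\psi(2^j)\le\psi(2^{j'})$, after which the residual factor of the form $2^{(j-j')\sigma}$ must furnish a summable geometric tail. Placing the one available derivative on the low factor $\dot\Delta_{j'}u^k$ (so that $\|\dot\Delta_{j'}u^k\|_{L^\infty}\lesssim 2^{-j'}\|\nabla u\|_{L^\infty}$ via Bernstein's Lemma) and using $\omega=\operatorname{curl}u$ to relocate derivatives, one finds $\sigma=s$, summable for $s>0$ as in \eqref{Bad TLCE}; for \eqref{Good TLCE} a further Bernstein/integration-by-parts step improves this to $\sigma=s+1$, summable for $s>-1$. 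The pointwise remainder is again controlled by $\mathcal{M}(\dot\Delta_{j''}\omega)$ and Lemma \ref{vector maximal inequality}, and $\dot R(u^k,\dot\Delta_j\partial_k\omega)$ is analogous. Summing the five contributions yields both inequalities. The hard part, as indicated, will be arranging the single derivative, the Bernstein gains, the curl identity, and the monotonicity of $\psi$ so that a genuinely convergent series in $j-j'$ is produced over the whole stated range of $s$, all while keeping every estimate pointwise in $x$ so that Lemma \ref{vector maximal inequality} stays applicable.
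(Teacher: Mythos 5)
Your proposal follows the same strategy as the paper's proof: homogeneous Bony decomposition of the commutator (your five pieces are the same as the paper's four up to grouping $\dot T_{\dot\Delta_j\partial_k\omega}u^k$ with $\dot R(u^k,\dot\Delta_j\partial_k\omega)$ into a single $T'$ term), pointwise domination by the maximal function via Lemma~\ref{convolution bound by maximal function}, the monotonicity of $\psi$ through \eqref{j j' estimate} to transfer the weight across bounded and one--sided index shifts, Young's inequality for series, and Lemma~\ref{vector maximal inequality} to close. Two small remarks are worth recording, though neither is a genuine gap. First, you do not need the Biot--Savart relation $u=\operatorname{div}((-\Delta)^{-1}\omega)$ anywhere --- and you should avoid relying on it, because the paper later applies this proposition with $\omega$ \emph{not} equal to $\operatorname{curl}(u)$ (in the Cauchy step one takes $\omega=u^{(n)}-u^{(m)}$ and $u=u^{(n-1)}$); the hypothesis actually used is only $\operatorname{div}u=0$. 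In the paper the passage to $\|\omega\|_{L^\infty}\|\nabla u\|_{F^{s,\psi}_{p,q}}$ in the good estimate is done purely with Bernstein's Lemma applied to $S_{j'+1}\partial_k\dot\Delta_j\omega$ and to $\Delta_{j'}u^k$; no vorticity identity enters. Second, your accounting of where the $s$--restrictions come from is slightly off: in the paper the integration by parts (using $\operatorname{div}u=0$) in the remainder $\dot\Delta_j\dot R(u^k,\partial_k\omega)$ always produces the exponent $s+1$, so that term only demands $s>-1$ in both estimates; the bottleneck $s>0$ for \eqref{Bad TLCE} comes entirely from the reverse paraproduct $T'_{\dot\Delta_j\partial_k\omega}u^k$, where the sum over $j'\ge j-2$ carries the factor $2^{(j-j')s}$. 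Your alternative of not integrating by parts in the remainder and bounding $\|\dot\Delta_{j'}u^k\|_{L^\infty}\lesssim 2^{-j'}\|\nabla u\|_{L^\infty}$ also works for the bad estimate (you then pick up a double maximal function from Bernstein applied to $\partial_k\dot\Delta_{j''}\omega$, which Lemma~\ref{vector maximal inequality} absorbs), but the paper's single integration by parts, putting $\partial_k$ on the kernel $\varphi_j$, is cleaner and handles both estimates simultaneously.
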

\begin{proof}We mirror the proof of Proposition 4.1 from \cite{CMZ10}. Using Einstein summation notation over repeated indices $k \in [1,d]$ and a homogeneous paraproduct decomposition, we write
\begin{align*}
[u\cdot\nabla,\dot{\Delta}_j]\omega &= u^k\dot{\Delta}_j\partial_k\omega - \dot{\Delta}_j u^k  \partial_k\omega\\
&= T_{u^k}\dot{\Delta}_j\partial_k\omega + T_{\dot{\Delta}_j\partial_k\omega}u^k + R(u^k,\dot{\Delta}_j\partial_k\omega)
\\ &\hspace{.5cm} - \dot{\Delta}_jT_{u^k}\partial_k\omega-\dot{\Delta}_jT_{\partial_k\omega}u^k-\dot{\Delta}_jR(u^k,\partial_k\omega)
\\
&= [T_{u^k},\dot{\Delta}_j]\partial_k\omega + T'_{\dot{\Delta}_j\partial_k\omega}u^k - \dot{\Delta}_jT_{\partial_k\omega}u^k - \dot{\Delta}_jR(u^k,\partial_k\omega)\\
&\eqdef I + II + III + IV,
\end{align*}
where $T'_uv=T_uv+R(u,v)$.
\\
\\
\textbf{Estimate of $I$:} We write
\begin{align*}
|I| &= \sum_{j' \geq 1}S_{j'-1}u^k\Delta_{j'}\dot{\Delta}_j\partial_k\omega - \dot{\Delta}_j\sum_{j' \geq 1}S_{j'-1}u^k\Delta_{j'}\partial_k\omega\\
&=
\sum_{|j'-j|\leq 4} [S_{j'-1}u^k,\dot{\Delta}_j]\Delta_{j'}\partial_k\omega
\end{align*}
where we used $\dot{\Delta}_{j'}(S_{j-1}f\dot{\Delta}_jf)=0$ for $|j'-j|>4$. Continuing, we expand out the convolution to see that 
\begin{align*}
|I| &= \left| \sum_{|j'-j|\leq 4} \left\{\int_{\R^d}S_{j'-1}u^k(x)-S_{j'-1}u^k(y))2^{jd}\varphi(2^j(x-y))\Delta_{j'}\partial_k\omega(y)dy\right\}\right| \\
&=
 \left| \sum_{|j'-j|\leq 4} \left\{\int_{\R^d}(S_{j'-1}u^k(x)-S_{j'-1}u^k(y))2^{j(d+1)}(\partial_k\varphi)(2^j(x-y))\Delta_{j'}\omega(y)dy\right\}\right| \\
 &\leq
 \sum_{|j'-j|\leq 4} \left\{\norm{\nabla S_{j'-1}u^k}{L^{\infty}}\int_{\R^d} 2^j|x-y|2^{jd}|\nabla \varphi(2^j(x-y))||\Delta_{j'}\omega(y)| dy \right\}\\
 & \leq 
 \sum_{|j'-j| \leq 4} \norm{S_{j'-1}\nabla u^k}{L^{\infty}}|\mathcal{M}\Delta_{j'} \omega(\cdot)|,
\end{align*}
where in the second equality we integrated by parts and used the fact that $u$ is divergence free, in the first inequality we used the mean value theorem, and in the last inequality we used Lemma \ref{convolution bound by maximal function} since $\int_{\mathbb{R}^d}2^{j(d+1)}|x||\varphi(2^jx)|\,dx$ is finite and independent of $j\in\mathbb{Z}$. Thus, multiplying by $2^{js}\psi(2^j)$, taking the $l^q(\mathbb{Z})$ norm in $j$, and then taking the $L^p(\R^d)$ norm we have 
\begin{equation}
\label{TLCE Ii}
\begin{split}
\norm{\norm{2^{js}\psi(2^j)|I|}{l^q(\mathbb{Z})}}{L^p} & \lesssim \norm{\nabla u}{L^{\infty}} \norm{\norm{\sum_{|j'-j| \leq 4} 2^{(j-j')s}\psi(2^j)\mathcal{M}(2^{j's}|\Delta_{j'}\omega|)}{l^q(\mathbb{Z})}}{L^p}\\
&\lesssim \norm{\nabla u}{L^{\infty}} \norm{\norm{\sum_{|j'-j| \leq 4} 2^{(j-j')s}\psi(2^{j'})\mathcal{M}(2^{j's}|\Delta_{j'}\omega|)}{l^q(\mathbb{Z})}}{L^p}.\\
\end{split}
\end{equation} 
Then by Young's convolution inequality for series we have
\begin{equation}
\begin{split}
\norm{\sum_{|j'-j| \leq 4} 2^{(j-j')s}\psi(2^{j'})\mathcal{M}(2^{j's}|\Delta_{j'}\omega|)}{l^q(\mathbb{Z})} 
&\leq 
\left(\sum_{|j| \leq 4} 2^{js}\right)\norm{\mathcal{M}(2^{j's}\psi(2^{j'})|\Delta_{j'}\omega|)}{l^q(\mathbb{Z})}\\
&\lesssim
\norm{2^{j's}\psi(2^{j'})|\Delta_{j'}\omega|}{l^q(\mathbb{Z})},
\end{split}
\end{equation}
where in the last inequality we used Lemma \ref{vector maximal inequality}. Substituting this into \eqref{TLCE Ii} we find that 
\begin{equation}
\label{TLCE I}
\norm{\norm{2^{js}\psi(2^j)|I|}{l^q(\mathbb{Z})}}{L^p}
\lesssim \norm{\nabla u}{L^{\infty}}\norm{\omega}{F^{s,\psi}_{p,q}}.
\end{equation}\\
\\
\textbf{Estimate of $II$:} Using the definition of $II$ we write 
$$
|II| \leq \sum_{j' \geq j-2} |S_{j'+1}\partial_k(\dot{\Delta}_j\omega)\tilde{\Delta}_{j'}u^k| \leq \sum_{j' \geq j-2} \norm{\nabla \dot{\Delta}_j\omega}{L^{\infty}}|\tilde{\Delta}_{j'}u^k|
$$
Then, utilizing Young's convolution inequality for series, we have 
\begin{equation}
\label{TLCE II}
\begin{split}
\norm{\norm{2^{js}\psi(2^j)|II|}{l^q(\mathbb{Z})}}{L^p} &\lesssim \norm{\nabla \omega}{L^{\infty}}\norm{\norm{\sum_{j' \geq j-2} 2^{js}\psi(2^j)|\tilde{\Delta}_{j'}u^k|}{l^q(\mathbb{Z})}}{L^p}\\
&\lesssim
\norm{\nabla \omega}{L^{\infty}}\norm{\norm{\sum_{j' \geq j-2} 2^{(j-j')s}2^{j's}\psi(2^{j'})|\tilde{\Delta}_{j'}u^k|}{l^q(\mathbb{Z})}}{L^p}\\
&\simeq
\norm{\nabla \omega}{L^{\infty}}\norm{\norm{\sum_{j' \geq -1} 2^{(j-j')s}\chi_{\{j-j' \leq 2\}}2^{j's}\psi(2^{j'})|\tilde{\Delta}_{j'}u^k|}{l^q(\mathbb{Z})}}{L^p}\\
&\lesssim
\norm{\nabla \omega}{L^{\infty}}\norm{\left(\sum_{j \geq -1} \chi_{\{j \leq 2\}} 2^{js}\right) \left(\sum_{j'\geq -1}\left(2^{j's}\psi(2^{j'})\tilde{\Delta}_{j'}u^k|\right)^q\right)^{\frac{1}{q}}}{L^p}\\
&\lesssim 
\norm{\nabla \omega}{L^{\infty}}\norm{u}{F^{s,\psi}_{p,q}}.
\end{split}
\end{equation}
\textbf{Estimate of $III$:}
We begin by writing 
$$
|III| 
\leq \sum_{|j'-j| \leq 4} | \dot{\Delta}_j(S_{j'-1}\partial_k\omega \Delta_{j'}u^k)|
\lesssim \sum_{|j'-j| \leq 4} | \mathcal{M}(S_{j'-1}\partial_k\omega\Delta_{j'}u^k)|
\lesssim \norm{\nabla \omega}{L^{\infty}}\sum_{|j'-j| \leq 4 } |\mathcal{M}(|\Delta_{j'}u^k|)|.
$$
Then applying an identical argument that we used to obtain \eqref{TLCE I} we find 
\begin{equation}
\label{TLCE III}
\begin{split}
\norm{\norm{2^{js}\psi(2^j)|III|}{l^q(\mathbb{Z})}}{L^p} & \lesssim \norm{\nabla \omega}{L^{\infty}} \norm{\norm{\sum_{|j'-j| \leq 4} 2^{js}\psi(2^j)\mathcal{M}(|\Delta_{j'} u^k|)}{l^q(\mathbb{Z})}}{L^p}\\
&
\lesssim \norm{\nabla \omega}{L^{\infty}} \norm{\norm{2^{j's}\psi(2^{j'})\mathcal{M}(|\Delta_{j'} u^k|)}{l^q(\mathbb{Z})}}{L^p}\\
&\lesssim \norm{\nabla \omega}{L^{\infty}}\norm{u}{F^{s,\psi}_{p,q}}.
\end{split}
\end{equation}
\textbf{Estimate of $IV$:} Expanding out the convolution we have 
\begin{align*}
|IV| &= \left| \dot{\Delta}_j \sum_{j' \geq -1} \Delta_{j'} \partial_k\omega\tilde{\Delta}_{j'}u^k \right| 
=
\left| \sum_{j' \geq j-3} \dot{\Delta}_j (\Delta_{j'} \partial_k\omega\tilde{\Delta}_{j'}u^k) \right| \\
&=
\left| \sum_{j' \geq j-3} \int_{\R^d} 2^{jd}\varphi(2^j(x-y))(\Delta_{j'}\partial_k\omega\tilde{\Delta}_{j'}u^k)(y)dy\right|.
\end{align*}
Since $u$ is divergence free we then integrate by parts to see that 
\begin{align*}
|IV| &= \left| \sum_{j' \geq j-3} \int_{\R^d} 2^j2^{jd}(\partial_k\varphi)(2^j(x-y)))(\Delta_{j'}\omega\tilde{\Delta}_{j'}u^k)(y)dy\right|
\\
&\lesssim \sum_{j' \geq j-3} 2^j \mathcal{M}(|\Delta_{j'}\omega \tilde{\Delta}_{j'}u^k|)\\
&
\lesssim 
\sum_{j' \geq j-3} 2^j \mathcal{M}(|\Delta_{j'}\omega|)\norm{\tilde\Delta_{j'} u^k}{L^{\infty}}.
\end{align*}
Then by Young's convolution inequality and Bernstein's Lemma we have 
\begin{equation}
\label{TLCE IV}
\begin{split}
\norm{\norm{2^{js}\psi(2^j)|IV|}{l^q}}{L^p} & \lesssim \norm{\norm{\sum_{j' \geq j-3} 2^{j(s+1)} \psi(2^j)\norm{\tilde\Delta_{j'} u^k}{L^{\infty}}\mathcal{M}(|\tilde{\Delta}_{j'}\omega|)}{L^q(\mathbb{Z})}}{L^p}\\
&\lesssim
\norm{\norm{\sum_{j' \geq j-3} 2^{(j-j')(s+1)} \norm{2^{j'}\tilde{\Delta}_{j'}u^k}{L^{\infty}}\mathcal{M}(|2^{j's}\psi(2^{j'})\tilde{\Delta}_{j'}\omega|)}{L^q(\mathbb{Z})}}{L^p}\\
&\lesssim \norm{\nabla u}{L^{\infty}}\norm{\omega}{F^{s,\psi}_{p,q}}.
\end{split}
\end{equation}
Then summing \eqref{TLCE I}, \eqref{TLCE II}, \eqref{TLCE III}, and \eqref{TLCE IV} yields \eqref{Bad TLCE}.
\\
\\
To see why \eqref{Good TLCE} holds we only need to modify the arguments for $II$ and $III$.
\\
\\
\textbf{Second estimate for $II$}
Applying Bernstein's Lemma gives
$$
|II|  \leq \sum_{j' \geq j-2} |S_{j'+1}\partial_k \dot{\Delta}_j\omega \tilde{\Delta}_{j'}u^k| \\
\lesssim 
\sum_{j' \geq j-2} 2^j \norm{\dot{\Delta}_j\omega}{L^{\infty}}|\tilde{\Delta}_{j'}u^k|.
$$
Then since $s+1>0$, by Young's convolution inequality we have
\begin{equation}
\label{TLCE IIii}
\begin{split}
\norm{\norm{2^{js}\psi(2^j)|II|}{l^q(\mathbb{Z})}}{L^p} & \lesssim \norm{\omega}{L^{\infty}}\norm{\norm{\sum_{j' \geq j-2} 2^{j(s+1)}\psi(2^j)|\tilde{\Delta}_{j'}u^k|}{l^q(\mathbb{Z})}}{L^p}\\
&
\lesssim
\norm{\omega}{L^{\infty}} \norm{\norm{\sum_{j' \geq j-2} 2^{(j-j')(s+1)}|2^{j'(s+1)}\psi(2^{j'})\tilde{\Delta}_{j'}u^k| }{l^q(\mathbb{Z})}}{L^p}\\
&\leq \norm{\omega}{L^{\infty}}\left(\sum_{j \leq 2} 2^{j(s+1)}\right)\norm{\norm{|2^{j'(s+1)}\psi(2^{j'})\tilde{\Delta}_{j'}u^k| }{l^q(\mathbb{Z})}}{L^p}\\
&\lesssim \norm{\omega}{L^{\infty}}\norm{u}{F^{s+1,\psi}_{p,q}}.
\end{split}
\end{equation}
\textbf{Second estimate for $III$:} We begin by observing 
\begin{align*}
|III| & \leq \sum_{|j'-j| \leq 4} \mathcal{M}(|S_{j'-1}\partial_k\omega \Delta_{j'}u^k|)\\
&
\lesssim \sum_{|j'-j| \leq 4} 2^{j'} \norm{S_{j'-1}\omega}{L^{\infty}}\mathcal{M}(|\Delta_{j'}u^k|) \\
&
\lesssim 
\norm{\omega}{L^{\infty}}\sum_{|j'-j| \leq 4} 2^{j'} |\mathcal{M}(|\Delta_{j'}u^k|).
\end{align*}
Substituting this in, we have 
\begin{equation}
\label{TLCE IIIii}
\begin{split}
\norm{\norm{2^{js}\psi(2^j)|III|}{l^q(\mathbb{Z})}}{L^p} &\lesssim \norm{\omega}{L^{\infty}}\norm{\norm{\sum_{\substack{j' \geq -1 \\ |j'-j| \leq 4}} 2^{js}\psi(2^j)2^{j'}\mathcal{M}(|\Delta_{j'}u^k|)}{l^q(\mathbb{Z})}}{L^p}\\
&\lesssim 
\norm{\omega}{L^{\infty}}\norm{\norm{\sum_{\substack{j'\geq -1 \\|j'-j| \leq 4}} 2^{(j-j')s}2^{j'(s+1)}\psi(2^{j'}) \mathcal{M}(|\Delta_{j'}u^k|)}{l^q(\mathbb{Z})}}{L^p}\\
&=
\norm{\omega}{L^{\infty}}\norm{\norm{\sum_{\substack{j'\geq -1}}\chi_{|j-j'|\leq 4} 2^{(j-j')s}2^{j'(s+1)}\psi(2^{j'}) \mathcal{M}(|\Delta_{j'}u^k|)}{l^q(\mathbb{Z})}}{L^p}\\
&\lesssim \norm{\omega}{L^{\infty}}\norm{u}{F^{s+1,\psi}_{p,q}}.
\end{split}
\end{equation}
In the second to last inequality we used Young's convolution inequality, and in the last we used Lemma \ref{vector maximal inequality}. Combining \eqref{TLCE I}, \eqref{TLCE IIii}, \eqref{TLCE IIIii}, and \eqref{TLCE IV} establishes the second part of the proposition.
\end{proof}

\begin{lemma}
\label{commutator hammer}{\cite{BCD11}}
Let $\theta \in C^1(\R^d)$ such that $(1+\abs{\cdot})\hat{\theta} \in L^1(\R^d)$. There exists a constant $C>0$ such that for any Lipschitz function $a$ with gradient in $L^p(\R^d)$ and any function $b \in L^q(\R^d)$, we have, for any positive $\lambda$
\begin{equation}
\norm{[\theta(\lambda^{-1}D),a]b}{L^r}\leq C\lambda^{-1}\norm{\nabla a}{L^p}\norm{b}{L^q} \text{   with }\frac{1}{p}+\frac{1}{q}=\frac{1}{r}.
\end{equation}
\end{lemma}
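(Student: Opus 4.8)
The plan is to realize $[\theta(\lambda^{-1}D),a]$ as an integral operator against a well-localized kernel and then exploit the Lipschitz control on $a$ through a first-order Taylor expansion; this is the classical argument of \cite{BCD11}, which we sketch for completeness.

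First I would pass to the convolution kernel. Writing $g := \mathcal{F}^{-1}\theta$, the Fourier multiplier $\theta(\lambda^{-1}D)$ acts by $\theta(\lambda^{-1}D)f = \lambda^d g(\lambda\,\cdot)*f$; under the Fourier normalization used in this paper the hypothesis $(1+|\cdot|)\hat{\theta}\in L^1(\R^d)$ is equivalent to $(1+|\cdot|)g\in L^1(\R^d)$, since $\mathcal{F}^{-1}\theta$ and $\hat\theta$ differ only by the reflection $x\mapsto -x$. In particular, both $g$ and $|\cdot|\,g$ lie in $L^1(\R^d)$.

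Next I would write the commutator explicitly. Because $\theta(\lambda^{-1}D)$ is a convolution, the terms in $\theta(\lambda^{-1}D)(ab) - a\,\theta(\lambda^{-1}D)b$ combine to
\[
[\theta(\lambda^{-1}D),a]b(x) = \int_{\R^d}\lambda^d g(\lambda(x-y))\bigl(a(y)-a(x)\bigr)b(y)\,dy.
\]
Using that $a$ is Lipschitz with $\nabla a\in L^p$, I would write $a(y)-a(x) = \int_0^1 \nabla a\bigl(x+t(y-x)\bigr)\cdot(y-x)\,dt$ — valid first for smooth $a$ and then in general by mollification — and perform the change of variables $z = x-y$, obtaining the pointwise bound
\[
\bigl|[\theta(\lambda^{-1}D),a]b(x)\bigr| \le \int_0^1\!\!\int_{\R^d}\lambda^d |g(\lambda z)|\,|z|\,\bigl|\nabla a(x-tz)\bigr|\,\bigl|b(x-z)\bigr|\,dz\,dt.
\]

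Finally I would take the $L^r$-norm in $x$: moving it inside the $z$- and $t$-integrals by Minkowski's integral inequality and applying Hölder's inequality with $\tfrac1p+\tfrac1q=\tfrac1r$ together with translation invariance, so that $\norm{\nabla a(\cdot-tz)\,b(\cdot-z)}{L^r}\le\norm{\nabla a}{L^p}\norm{b}{L^q}$ for every fixed $z,t$, I would arrive at
\[
\norm{[\theta(\lambda^{-1}D),a]b}{L^r} \le \norm{\nabla a}{L^p}\norm{b}{L^q}\int_{\R^d}\lambda^d|g(\lambda z)|\,|z|\,dz = \lambda^{-1}\,\norm{|\cdot|\,g}{L^1}\,\norm{\nabla a}{L^p}\norm{b}{L^q},
\]
the last equality coming from the substitution $w=\lambda z$. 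This gives the claim with $C = \norm{|\cdot|\,\mathcal{F}^{-1}\theta}{L^1}$, which is finite by hypothesis. There is no genuine obstacle: the only points requiring care are the correct scaling of the kernel under the chosen Fourier convention, the justification of the Taylor identity for a merely Lipschitz $a$, and the routine bookkeeping needed to combine Minkowski's integral inequality and Hölder across the three exponents $p,q,r$.
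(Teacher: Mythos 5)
Your proof is correct, and since the paper cites this lemma from \cite{BCD11} without reproducing the argument, the right comparison is to the source: your proof is essentially identical to the proof of Lemma~2.97 in Bahouri--Chemin--Danchin. You write the commutator as an integral against the scaled kernel $\lambda^d g(\lambda\cdot)$ with $g=\mathcal{F}^{-1}\theta$, insert the first-order Taylor identity $a(y)-a(x)=\int_0^1\nabla a(x+t(y-x))\cdot(y-x)\,dt$ (which is legitimate for Lipschitz $a$ by mollification, since $\nabla a\in L^\infty$ guarantees pointwise convergence of all integrals involved), and close with Minkowski's integral inequality, H\"older with $\tfrac1p+\tfrac1q=\tfrac1r$, and translation invariance, obtaining $C=\||\cdot|\,\mathcal{F}^{-1}\theta\|_{L^1}$, which is finite because $\mathcal{F}^{-1}\theta(x)=\hat\theta(-x)$ under the paper's convention. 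The bookkeeping of the scaling of $g$ under the normalization $\hat f(\xi)=\int f(x)e^{-2\pi i x\cdot\xi}\,dx$ is handled correctly, and the $\lambda^{-1}$ factor emerges exactly from the substitution $w=\lambda z$. No gaps.
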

We make use of the following lemma which establishes a bound on the remainder term in Besov spaces of generalized smoothness. The proof follows extremely closely to that of Theorem 2.52 in \cite{BCD11} but we include it for sake of completeness.
\begin{lemma}
\label{remainder estimate}
Let $s_1,s_2 \geq 0$ and $p_1,p_2,r_1,r_2 \in [1,\infty]$. We set 
\begin{equation*}
\frac{1}{p}=\frac{1}{p_1}+\frac{1}{p_2} \leq 1, \frac{1}{r}=\frac{1}{r_1}+\frac{1}{r_2} \leq 1,  \text{ and } s=s_1+s_2>0,
\end{equation*}
then for any $(u,v) \in B^{s_1,\psi}_{p_1,r_1}(\R^d) \times B^{s_2}_{p_2,r_2}(\R^d)$ we have 
\begin{equation}
\norm{R(u,v)}{B^{s,\psi}_{p,r}} \lesssim\norm{u}{B^{s_1,\psi}_{p_1,r_1}}\norm{v}{B^{s_2}_{p_2,r_2}}.
\end{equation}
\end{lemma}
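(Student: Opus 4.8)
The plan is to mirror the proof of Theorem 2.52 in \cite{BCD11}, the one new ingredient being the bookkeeping of the weight $\psi$, handled via its monotonicity and the slowly-varying bound \eqref{j j' estimate}. First I would write $R(u,v) = \sum_{k \geq -1}\Delta_k u\,\tilde{\Delta}_k v$ and note that, since both $\Delta_k u$ and $\tilde{\Delta}_k v$ have Fourier support in a ball of radius comparable to $2^k$, the product $\Delta_k u\,\tilde{\Delta}_k v$ has Fourier support in a ball $B_{C_0 2^k}(0)$ with $C_0$ fixed. Hence there is a fixed integer $N_0$ with $\Delta_j(\Delta_k u\,\tilde{\Delta}_k v) = 0$ whenever $k < j - N_0$, so that
$$\Delta_j R(u,v) = \sum_{\substack{k \geq -1 \\ k \geq j - N_0}}\Delta_j\big(\Delta_k u\,\tilde{\Delta}_k v\big),$$
and the condition $s = s_1+s_2 > 0$ ensures this sum and the series defining $R(u,v)$ converge in $\mathcal{S}'(\R^d)$.

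Next I would estimate a single block. Since $\Delta_j$ is convolution against an $L^1$-normalized kernel with $L^1$ norm bounded uniformly in $j$, Hölder's inequality with $\tfrac1p = \tfrac1{p_1}+\tfrac1{p_2}$ gives $\norm{\Delta_j(\Delta_k u\,\tilde{\Delta}_k v)}{L^p} \lesssim \norm{\Delta_k u}{L^{p_1}}\norm{\tilde{\Delta}_k v}{L^{p_2}}$. Setting $c_k \eqdef 2^{ks_1}\psi(2^k)\norm{\Delta_k u}{L^{p_1}}$ and $d_k \eqdef 2^{ks_2}\norm{\tilde{\Delta}_k v}{L^{p_2}}$, one has $\norm{c_k}{l^{r_1}} = \norm{u}{B^{s_1,\psi}_{p_1,r_1}}$ and, after absorbing the three unit shifts hidden in $\tilde{\Delta}_k$, $\norm{d_k}{l^{r_2}} \lesssim \norm{v}{B^{s_2}_{p_2,r_2}}$. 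Multiplying the block estimate by $2^{js}\psi(2^j)$ then yields
$$2^{js}\psi(2^j)\norm{\Delta_j R(u,v)}{L^p} \lesssim \sum_{k \geq j-N_0} 2^{(j-k)s}\,\frac{\psi(2^j)}{\psi(2^k)}\,c_k d_k ,$$
and since $j - k \leq N_0$ on this range, monotonicity of $\psi$ together with \eqref{j j' estimate} gives $\psi(2^j) \leq \psi(2^{k+N_0}) \leq C_{N_0,\psi}\psi(2^k)$, so the weight ratio is $\lesssim 1$ and disappears.

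Finally, I would read the resulting bound as a discrete convolution: with $\theta_n \eqdef 2^{-ns}\chi_{\{n \leq N_0\}}$, which lies in $l^1(\mathbb{Z})$ exactly because $s > 0$, one has $2^{js}\psi(2^j)\norm{\Delta_j R(u,v)}{L^p} \lesssim (\theta * (c\,d))_j$. Hölder for sequences with $\tfrac1r = \tfrac1{r_1}+\tfrac1{r_2}$ gives $\norm{c_k d_k}{l^r} \leq \norm{c_k}{l^{r_1}}\norm{d_k}{l^{r_2}}$, and Young's convolution inequality for series ($l^1 * l^r \subset l^r$) then produces $\norm{R(u,v)}{B^{s,\psi}_{p,r}} \lesssim \norm{\theta}{l^1}\norm{u}{B^{s_1,\psi}_{p_1,r_1}}\norm{v}{B^{s_2}_{p_2,r_2}}$, as claimed. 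I do not anticipate a genuine obstacle: the only non-routine step is trading $\psi(2^j)$ for $\psi(2^k)$ via \eqref{j j' estimate}, and the rest is the classical paraproduct-remainder computation, so the work amounts to careful tracking of the finitely shifted summation ranges (in particular the low-frequency block $j=-1$, where $\Delta_{-1}$ is ball-supported, is subsumed by the constraint $k \geq -1$).
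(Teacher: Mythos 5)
Your proof is correct and follows essentially the same route as the paper's: decompose $R(u,v)$ into blocks supported in balls $\sim 2^k$, note the finite summation range $k \geq j - N_0$, trade $\psi(2^j)$ for $\psi(2^k)$ via monotonicity and the slowly-varying bound \eqref{j j' estimate}, and close with Hölder plus Young's convolution inequality on the $\ell^1$ kernel $2^{-ns}\chi_{\{n\le N_0\}}$. The only differences from the paper's writeup are notational (bundling the $|\nu|\le 1$ sum into $\tilde{\Delta}_k$ and swapping index labels), so nothing new to flag.
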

\begin{proof}
We begin by writing
$$
R(u,v) = \sum_{|j'-j|\leq 1} \Delta_{j'}u\Delta_jv = \sum_{j}R_j,
$$
where 
$$
R_j=\sum_{|\nu|\leq 1} \Delta_{j-\nu} u\Delta_j v.
$$
Note that there exists some $R>0$ such that $\supp \widehat{R_j} \subset  2^jB(0,R)$ and $N_0 \in \mathbb{Z}$ such that
$$
j'\geq j+N_0 \implies \Delta_{j'}R_j=0.
$$
We then write
$$
\Delta_{j'}R(u,v) = \sum_{\substack{ |\nu|\leq 1 \\ j\geq j'-N_0}} \Delta_{j'}(\Delta_{j-\nu}u\Delta_jv).
$$
It follows that
\begin{equation}
\begin{split}
2^{j's}\psi(2^{j'})\norm{\Delta_{j'}R(u,v)}{L^p} &\lesssim 2^{j's}\psi(2^{j'})\sum_{\substack{ |\nu|\leq 1 \\ j\geq j'-N_0}} \norm{\Delta_{j-\nu}u\Delta_jv}{L^p}\\
&{\hspace{-3cm}}\lesssim 
\sum_{\substack{ |\nu|\leq 1 \\ j\geq j'-N_0}} 2^{-(j-j')s}2^{(j-\nu)s_1}\psi(2^{j-\nu})\norm{\Delta_{j-\nu}u}{L^{p_1}}2^{js_2}\norm{\Delta_jv}{L^{p_2}}.
\end{split}
\end{equation}
Taking the $\ell^r(\mathbb{Z})$ norm and applying Young's convolution inequality for series and Hölder's inequality yields the desired result.
\end{proof}
\begin{proposition}
\label{besov commutator estimate}
Let $s \geq -1$ and $1 \leq p,r \leq \infty$. Let $u$ be a divergence free vector field over $\R^d$. Define $R_j =[u\cdot \nabla, \Delta_j]\omega$. There exists a constant $C$, depending continuously on $p,s,r,$ and $d$, such that 
$$
\norm{\left(2^{js}\psi(2^j)\norm{\Delta_jR_j}{L^p}\right)_j}{l^r} \leq C\left(\norm{\nabla u}{L^{\infty}}\norm{\omega}{B^{s,\psi}_{p,r}} + \norm{\nabla \omega}{L^{\infty}}\norm{\nabla u}{B^{s-1,\psi}_{p,r}}\right).
$$
Note that this can be restrictive as it may be $\nabla \omega \notin L^{\infty}$. If this is the case we also have that 
$$
\norm{\left(2^{js}\psi(2^j)\norm{\Delta_jR_j}{L^p}\right)_j}{l^r} \leq C\left(\norm{\nabla u}{L^{\infty}}\norm{\omega}{B^{s,\psi}_{p,r}} + \norm{\omega}{L^{\infty}}\norm{\nabla u}{B^{s,\psi}_{p,r}}\right).
$$
\end{proposition}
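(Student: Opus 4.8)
The plan is to adapt the classical commutator argument from \cite{BCD11} (run, e.g., in the proof of Proposition~\ref{Triebel Lizorkin Commutator Estimate}) to the weighted scale. The one genuinely new point is that the slowly varying weight may be transferred between comparable dyadic scales: by \eqref{j j' estimate}, $\psi(2^j)\lesssim\psi(2^{j'})$ whenever $j-j'\le l$, and this is exactly what keeps the discrete convolution sums bounded once the weight is inserted.

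First I would split the commutator via Bony's decomposition. Writing $R_j=[u^k,\Delta_j]\partial_k\omega$ with summation over $k$ and applying the nonhomogeneous paraproduct decomposition \eqref{paraproduct decomposition} to both $u^k\,\Delta_j\partial_k\omega$ and $\Delta_j(u^k\partial_k\omega)$ one obtains
\[
R_j=\underbrace{[T_{u^k},\Delta_j]\partial_k\omega}_{R^{\mathrm I}_j}+\underbrace{T'_{\Delta_j\partial_k\omega}u^k}_{R^{\mathrm{II}}_j}-\underbrace{\Delta_jT_{\partial_k\omega}u^k}_{R^{\mathrm{III}}_j}-\underbrace{\Delta_jR(u^k,\partial_k\omega)}_{R^{\mathrm{IV}}_j},\qquad T'_{a}b\eqdef T_{a}b+R(a,b),
\]
and then apply the outer $\Delta_j$ to each term, multiply by $2^{js}\psi(2^j)$, take the $\ell^r$ norm in $j$, and finish with Young's convolution inequality for series together with \eqref{j j' estimate}.

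For $R^{\mathrm I}_j$, spectral support considerations reduce it to the finite sum $\sum_{|j'-j|\le N_0}[S_{j'-2}u^k,\Delta_j]\Delta_{j'}\partial_k\omega$, and Lemma~\ref{commutator hammer} (applied with $\theta=\widehat{\varphi}$, so that $\theta(2^{-j}D)=\Delta_j$) gives $\norm{[S_{j'-2}u^k,\Delta_j]\Delta_{j'}\partial_k\omega}{L^p}\lesssim 2^{-j}\norm{\nabla u}{L^\infty}\norm{\Delta_{j'}\partial_k\omega}{L^p}\lesssim\norm{\nabla u}{L^\infty}\norm{\Delta_{j'}\omega}{L^p}$, using Bernstein's Lemma to trade $2^{-j}\partial_k$ for $2^{j'-j}\sim1$; since $|j-j'|\le N_0$, \eqref{j j' estimate} replaces $\psi(2^j)$ by $\psi(2^{j'})$ and the sum yields $\norm{\nabla u}{L^\infty}\norm{\omega}{B^{s,\psi}_{p,r}}$. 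For $R^{\mathrm{III}}_j$ I would expand $T_{\partial_k\omega}u^k=\sum_{j'}S_{j'-2}\partial_k\omega\,\Delta_{j'}u^k$; the outer $\Delta_j\Delta_j$ restricts the sum to $|j'-j|\le N_0$, so $\norm{\Delta_jR^{\mathrm{III}}_j}{L^p}\lesssim\sum_{|j'-j|\le N_0}\norm{S_{j'-2}\partial_k\omega}{L^\infty}\norm{\Delta_{j'}u^k}{L^p}$. Bounding the first factor by $\norm{\nabla\omega}{L^\infty}$ and writing $\norm{\Delta_{j'}u}{L^p}\sim2^{-j'}\norm{\Delta_{j'}\nabla u}{L^p}$ gives the contribution $\norm{\nabla\omega}{L^\infty}\norm{\nabla u}{B^{s-1,\psi}_{p,r}}$, while bounding it instead by $2^{j'}\norm{\omega}{L^\infty}$ (Bernstein on a ball) gives $\norm{\omega}{L^\infty}\norm{\nabla u}{B^{s,\psi}_{p,r}}$; this dichotomy is the source of the two alternatives in the statement. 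The term $R^{\mathrm{II}}_j$ is handled identically once one notes that, since $\Delta_j\partial_k\omega$ is already frequency-localized near $2^j$, the outer $\Delta_j$ again leaves only finitely many scales $j''\sim j$, with $\norm{\Delta_j\partial_k\omega}{L^\infty}\lesssim\norm{\nabla\omega}{L^\infty}$ (respectively $\lesssim2^j\norm{\omega}{L^\infty}$) playing the role of $\norm{S_{j'-2}\partial_k\omega}{L^\infty}$.

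The term $R^{\mathrm{IV}}_j$ is where the divergence-free hypothesis enters, and I expect it to be the main obstacle. Using $\partial_k(\Delta_iu^k\,\Delta_{j'}\omega)=\Delta_iu^k\,\Delta_{j'}\partial_k\omega$, valid because $\text{div}\,u=0$, one has $R(u^k,\partial_k\omega)=\partial_kR(u^k,\omega)$; writing $R(u^k,\omega)=\sum_{j''}\widetilde{\rho}_{j''}$ with each $\widetilde{\rho}_{j''}$ supported in a ball of radius $\lesssim2^{j''}$, only scales $j''\ge j-N_0$ survive under $\Delta_j$, and $\norm{\Delta_j\partial_k\widetilde{\rho}_{j''}}{L^p}\lesssim2^j\norm{\Delta_{j''}u}{L^\infty}\norm{\Delta_{j''}\omega}{L^p}\lesssim2^{j-j''}\norm{\nabla u}{L^\infty}\norm{\Delta_{j''}\omega}{L^p}$. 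Inserting $2^{js}\psi(2^j)$, using $\psi(2^j)\lesssim\psi(2^{j''})$ and $2^{js}2^{j-j''}=2^{(j-j'')(s+1)}2^{j''s}$, the relevant kernel $2^{(j-j'')(s+1)}\chi_{\{j-j''\le N_0\}}$ belongs to $\ell^1(\mathbb{Z})$ exactly when $s+1>0$, producing $\norm{\nabla u}{L^\infty}\norm{\omega}{B^{s,\psi}_{p,r}}$ (the borderline case $s=-1$ requires a separate treatment of the low-frequency part of $\omega$). Summing the four estimates gives both claimed inequalities; apart from $R^{\mathrm{IV}}_j$, the remaining labor is the routine verification that none of the discrete convolutions is spoiled by the weight, which reduces entirely to the monotonicity of $\psi$ and \eqref{j j' estimate}.
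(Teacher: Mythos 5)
Your overall strategy (nonhomogeneous Bony decomposition of the commutator, Lemma~\ref{commutator hammer} for the paraproduct commutator, Young's convolution inequality for the discrete sums, and the monotonicity property \eqref{j j' estimate} of $\psi$ to transfer the weight between comparable scales) is the right one and coincides with the paper's in spirit. But you are missing a step that the paper makes at the very beginning: it first splits $u = S_0 u + \tilde u$ and then runs the paraproduct decomposition only on $\tilde u = u - S_0 u$, while the low-frequency piece $[S_0 u^k,\Delta_j]\partial_k\omega$ is estimated as a separate term ($R_j^8$) directly with Lemma~\ref{commutator hammer}. This split is not cosmetic. Several of your estimates rely on inequalities of the form $\norm{\Delta_{j'}u}{L^\infty}\lesssim 2^{-j'}\norm{\nabla u}{L^\infty}$ or $\norm{\Delta_{j'}u}{L^p}\lesssim 2^{-j'}\norm{\Delta_{j'}\nabla u}{L^p}$, which come from the \emph{annulus} case of Bernstein's Lemma. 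They fail for $j'=-1$: $\Delta_{-1}u$ is supported in a ball, not an annulus, and the reverse Bernstein inequality is false there (e.g.\ $u=\sin(\eps x)$ has $\norm{\Delta_{-1}u}{L^\infty}\approx 1$ but $\norm{\nabla u}{L^\infty}\to 0$; more conceptually, the commutator $[u\cdot\nabla,\Delta_j]$ is invariant under $u\mapsto u+c$ while $\norm{\Delta_{-1}u}{L^\infty}$ is not, so any estimate of an individual piece that produces $\norm{\Delta_{-1}u}{L^\infty}$ cannot close to $\norm{\nabla u}{L^\infty}$).

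Concretely, this bites in your treatment of $R^{\mathrm{IV}}_j$: after writing $R(u^k,\partial_k\omega)=\partial_kR(u^k,\omega)$, the remainder $R(u^k,\omega)=\sum_{|i-j'|\le 1}\Delta_iu^k\,\Delta_{j'}\omega$ contains the block $i=-1$, and your bound $\norm{\Delta_{j''}u}{L^\infty}\lesssim 2^{-j''}\norm{\nabla u}{L^\infty}$ is not valid there. (A similar but milder issue arises in the remainder part of $R^{\mathrm{II}}_j$ when $j$ is small.) These contributions only occur for finitely many $j$, but for those $j$ your argument, as written, delivers $\norm{\Delta_{-1}u}{L^\infty}$ where the statement requires $\norm{\nabla u}{L^\infty}$, and that factor cannot be recovered without using the commutator structure. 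In the paper, every block $\Delta_{j'}\tilde u$, including $j'=-1,0$, is supported in an annulus (since $\tilde u$ has frequencies bounded away from the origin), so Bernstein applies uniformly, and the problematic low frequency of $u$ is exactly what is channeled into $R_j^8$ and handled by Lemma~\ref{commutator hammer}, which preserves the commutator and produces $\norm{\nabla u}{L^\infty}$ even for the ball-supported piece. You would need to insert the same split (or a functionally equivalent one, e.g.\ writing $[\Delta_{-1}u^k,\Delta_j]\partial_k\omega$ as a genuine commutator for the finitely many relevant $j$) to make the argument close. Apart from that, the rest of your outline, including the two alternative bounds for $R^{\mathrm{III}}_j$/$R^{\mathrm{II}}_j$ that explain the dichotomy in the statement and the remark about the borderline $s=-1$ case, is correct and tracks the paper.
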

\begin{proof}
We follow the proof of Lemma 2.100 from \cite{BCD11}. We begin by splitting $u$ into low and high frequencies. We write $u=S_0u + \tilde{u}$. Using Einstein summation notation over repeated indices we write 
$$
R_j = [\tilde u^k,\Delta_j]\partial_k \omega + [S_0u^k, \Delta_j]\partial_k\omega.
$$
The term involving the low frequencies of $u$ is easier to estimate while the other term presents a few challenges. Using paraproducts, we further decompose  $[\tilde u^k,\Delta_j]\partial_k \omega$ as 
\begin{align*}
[\tilde u^k,\Delta_j]\partial_k \omega &=
\tilde{u}^k \cdot \Delta_j \partial_k\omega - \Delta_j (\tilde{u}^k \partial_k \omega)\\
&= T_{\tilde{u}^k}(\Delta_j \partial_k\omega) + T_{\Delta_j \partial_k \omega}(\tilde{u}^k)+R(\tilde{u}^k,\Delta_j\partial_k\omega) -\Delta_j T_{\tilde{u}^k}(\partial_k\omega) -\Delta_j T_{\partial_k\omega}(\tilde{u}^k) -\Delta_jR(\tilde{u}^k,\partial_k\omega)\\
&=
[T_{\tilde{u}^k},\Delta_j]\partial_k \omega T_{\Delta_j \partial_k \omega}(\tilde{u}^k)-\Delta_j T_{\partial_k\omega}(\tilde{u}^k) + +R(\tilde{u}^k,\Delta_j\partial_k\omega) - \Delta_jR(\tilde{u}^k,\partial_k\omega).
\end{align*}
Furthermore, note that 
\begin{align*}
R(\tilde{u}^k,\Delta_j\partial_k\omega) &= \sum_{|l-j'|\leq 1} \sum_{k=1}^d \partial_k \left(\Delta_l\tilde{u}^k\Delta_{j'}\Delta_j\omega\right) - \sum_{|l-j'|\leq 1} \sum_{k=1}^d \Delta_l\partial_k\tilde{u}^k\Delta_{j'}\Delta_j\omega\\
&= \partial_k R(\tilde{u}^k,\Delta_j\omega)-R(\text{div }\tilde{u}, \Delta_j \omega),
\end{align*}
and
\begin{align*}
\Delta_jR(\tilde{u}^k,\partial_k\omega) &= \Delta_j \sum_{|l-j'|\leq 1} \Delta_l \tilde{u}^k \Delta_{j'}\partial_k\omega\\
&=
\Delta_j \partial_k \sum_{|l-k|\leq 1} \Delta_l\tilde{u}^k\Delta_{j'}\omega - \Delta_j \sum_{|l-j'|\leq 1} \Delta_l \partial_k \tilde{u}^k \Delta_{j'}\omega\\
&=
\Delta_j \partial_k R(\tilde{u}^k,\omega) - \Delta_j R(\text{div }\tilde{u},\omega).
\end{align*}
Thus, we decompose
$$
R_j=\sum_{i=1}^8 R_j^i
$$
where 
\begin{align*}
&R_j^1 = [T_{\tilde{u}^k},\Delta_j]\partial_k \omega, \hspace{1cm}&\hspace{1cm} &R_j^2 = T_{\partial_k\Delta_j\omega}\tilde{u}^k, \\
&R_j^3 = -\Delta_jT_{\partial_k\omega}\tilde{u}^k,  \hspace{1cm}&\hspace{1cm} &R_j^4 = \partial_kR(\tilde{u}^k, \Delta_j\omega), \\
&R_j^5 =-R(\text{div }\tilde{u}, \Delta_j\omega),  \hspace{1cm}&\hspace{1cm} &R_j^6 = -\partial_k\Delta_jR(\tilde{u}^k,\omega),\\
&R_j^7=\Delta_jR(\text{div }\tilde{u},\omega),  \hspace{1cm}&\hspace{1cm} &R_j^8 = [S_0u^k,\Delta_j]\partial_k\omega.\\
\end{align*}
In what follows we let $c_j$ denote a sequence such that $\norm{(c_j)}{l^r} \leq 1$. 

\noindent
\textbf{Estimate of $R_j^1$:} Due to the support of the Littlewood-Paley operators we write 
$$
R_j^1 = \sum_{|j-j'| \leq 4} [S_{j'-1}\tilde{u}^k,\Delta_j]\partial_k\Delta_{j'}\omega.
$$
Thus, applying Bernstein's Lemma and Lemma \ref{commutator hammer} we have that 
\begin{equation}
\label{Rj1 estimate}
\begin{split}
2^{js}\psi(2^j)\norm{R_j^1}{L^p} &\lesssim  \norm{\nabla u}{L^{\infty}} \sum_{|j-j'|\leq 4} 2^{js}\psi(2^j)\norm{\Delta_{j'}\partial_k\omega}{L^p}\\
&\lesssim \norm{\nabla u}{L^{\infty}} \sum_{|j-j'|\leq 4} 2^{j'(s-1)}\psi(2^{j'})\norm{\Delta_{j'}\partial_k\omega}{L^p}\\
&\lesssim c_j \norm{\nabla u}{L^{\infty}}\norm{\omega}{B_{p,r}^{s,\psi}}.
\end{split}
\end{equation}
\\
\\
\textbf{Estimate for $R_j^2$:} By the support of the Littlewood-Paley operators we write 
$$
R_j^2 = \sum_{j' \geq j-3} S_{j'-1}\partial_k \Delta_j\omega \Delta_{j'}\tilde{u}^k.
$$
Using Hölder's inequality and Bernstein's Lemma yields
\begin{equation}
\label{Rj2 estimate}
\begin{split}
2^{js}\psi(2^j)\norm{R_j^2}{L^p} &\leq \sum_{j' \geq j-3} 2^{js}\psi(2^j)\norm{S_{j'-1}\partial_k\Delta_j\omega\Delta_{j'}\tilde{u}^k}{L^p}\\
&\lesssim \sum_{j' \geq j-3} 2^{j'(s-1)}\psi(2^j)\norm{S_{j'-1}\partial_k\Delta_j\omega}{L^p}2^{j'}\norm{\Delta_j\tilde{u}^k}{L^{\infty}}\\
&\lesssim \norm{\nabla u}{L^{\infty}}\sum_{j' \geq j-3} 2^{j's}\psi(2^{j'})\norm{\Delta_j\omega}{L^p}\\
&\lesssim c_j \norm{\nabla u }{L^{\infty}}\norm{\omega}{B^{s,\psi}_{p,r}}.
\end{split}
\end{equation}
\\
\\
\textbf{Estimate for $R_j^3$} We write
\begin{align*}
R_j^3 &= -\Delta_jT_{\partial_k\omega}\tilde{u}^k \\
&=
-\sum_{|j-j'|\leq 4}\Delta_j(S_{j'-1}\partial_k\omega\Delta_{j'}\tilde{u}^k).
\end{align*}
Thus we have that 
\begin{equation}
\label{Rj3 estimate}
\begin{split}
2^{js}\psi(2^j)\norm{R_j^3}{L^p} &\lesssim \sum_{|j-j'|\leq 4} 2^{js}\psi(2^j)\norm{\nabla S_{j'-1}\omega}{L^{\infty}}\norm{\Delta_j\tilde{u}^k}{L^p}\\
&\lesssim \norm{\nabla \omega}{L^{\infty}}\sum_{|j-j'|\leq 4} 2^{j'(s-1)}\psi(2^{j'})\norm{\Delta_{j'}\nabla u}{L^p}\\
&\lesssim c_j\norm{\nabla \omega}{L^{\infty}}\norm{\nabla u}{B^{s-1,\psi}_{p,r}}.
\end{split}
\end{equation}
Note we can replace this estimate by 
\begin{equation}
\label{Rj3 estimate 2}
\begin{split}
2^{js}\psi(2^j)\norm{R_j^3}{L^p} &\lesssim \sum_{|j-j'|\leq 4} 2^{js}\psi(2^j)\norm{\nabla S_{j'-1}\omega}{L^{\infty}}\norm{\Delta_j\tilde{u}^k}{L^p}\\
&\lesssim 
\sum_{|j-j'|\leq 4} 2^{j(s+1)}\psi(2^j)\norm{S_{j'-1}\omega}{L^{\infty}}\norm{\Delta_j\tilde{u}^k}{L^p}\\
&\lesssim \norm{\omega}{L^{\infty}}\sum_{|j-j'|\leq 4} 2^{j's}\psi(2^{j'})\norm{\Delta_{j'}\nabla u}{L^{p}}\\
&\lesssim c_j\norm{\omega}{L^{\infty}}\norm{\nabla u}{B^{s,\psi}_{p,r}}.
\end{split}
\end{equation}
\\
\\
\textbf{Bound for $R_j^4$:} We write
$$
R_j^4 = \sum_{|j-j'|\leq 2} \partial_k(\Delta_{j'}\tilde{u}^k\Delta_j\tilde{\Delta}_{j'}\omega).
$$
Using the product rule and Bernstein's Lemma gives 
\begin{equation}
\label{Rj4 estimate}
\begin{split}
2^{js}\psi(2^j) \norm{R_j^4}{L^p} &\lesssim \sum_{|j-j'|\leq 2} 2^{js}\psi(2^j)\norm{\partial_k\left( \Delta_{j'}\tilde{u}^k\Delta_j\tilde{\Delta}_{j}\omega\right)}{L^p}\\
&\lesssim \sum_{|j-j'|\leq 2} 2^{j's}\psi(2^{j'})\left( \norm{\nabla u}{L^{\infty}}\norm{\Delta_{j}\omega}{L^p}+\norm{\Delta_{j'}\tilde{u}^k}{L^{\infty}}\norm{\Delta_j\tilde{\Delta}_j \partial_k\omega}{L^p}\right)\\
&\lesssim 
\sum_{|j-j'|\leq 2} 2^{j's}\psi(2^j) \left(\norm{\nabla u}{L^{\infty}}\norm{\Delta_{j}\omega}{L^p} + 2^{j'}\norm{\Delta_{j'}\tilde{u}^k}{L^p}\norm{\Delta_j \omega}{L^p}\right)
\\
&\lesssim c_j\norm{\nabla u}{L^{\infty}}\norm{\omega}{B^{s,\psi}_{p,r}}.
\end{split}
\end{equation}
\\
\\
\textbf{Estimate for $R_j^5$:} We write
$$
R_j^5 = \sum_{|j-j'|\leq 2} \Delta_{j'}\text{div }\tilde{u}\Delta_j\tilde{\Delta}_{j}\omega.
$$
Thus we have that 
\begin{equation}
\label{Rj5 estimate}
\begin{split}
2^{js}\psi(2^j)\norm{R_j^5}{L^p} &\lesssim \sum_{|j-j'|\leq 2} 2^{j's}\psi(2^{j'})\norm{\Delta_{j'}\nabla u}{L^{\infty}}\norm{\tilde{\Delta}_j \omega}{L^p}\\
&\lesssim 
c_j\norm{\nabla u}{L^{\infty}}\norm{\omega}{B^{s,\psi}_{p,r}}.
\end{split}
\end{equation}
\\
\\
\textbf{Estimate for $R_j^6$: }Recall that $R_j^6 = -\partial_k\Delta_jR(\tilde{u},\omega)$. By Lemma \ref{remainder estimate} and Bernstein's lemma
\begin{equation}
\label{Rj6 estimate}
\norm{\left(2^{js}\psi(2^j)\norm{\Delta_jR_j^6}{L^p}\right)_j}{l^r} \lesssim \norm{\omega}{B^{s,\psi}_{p,r}}\norm{\nabla u}{B_{\infty,\infty}^0} \lesssim \norm{\omega}{B^{s,\psi}_{p,r}}\norm{\nabla u}{L^{\infty}},
\end{equation}
where in the last inequality we used the embedding $L^{\infty} \hookrightarrow B_{\infty,\infty}^0$.
\\
\\
\textbf{Estimate for $R_j^7$: } Arguing as just before we have that
\begin{equation}
\label{Rj7 estimate}
\norm{\left(2^{js}\psi(2^j)\norm{\Delta_jR_j^7}{L^p}\right)_j}{l^r} \lesssim \norm{\omega}{B^{s,\psi}_{p,r}}\norm{\nabla u}{L^{\infty}}.
\end{equation}
\\
\\
\textbf{Estimate for $R_j^8$: }Recall that $R_j^8 = [\Delta_{-1}\tilde{u}^k,\Delta_j]\partial_k\omega$. Since
$$
R_j^8 = \sum_{|j-j'|\leq 1}[\Delta_j, \Delta_{-1}u]\cdot\nabla\Delta_{j'}\omega,
$$
applying Lemma \ref{commutator hammer} yields
\begin{equation}
\label{Rj8 estimate}
\begin{split}
2^{j}\psi(2^j)\norm{R_j^8}{L^p} &\lesssim \sum_{|j-j'|\leq 1} 2^{-j'}2^{j's}\psi(2^{j'})\norm{\nabla \Delta_{-1}u}{L^{\infty}}\norm{\nabla \Delta_{j'}\omega}{L^p}\\
&\lesssim c_j\norm{\nabla u}{L^{\infty}}\norm{\omega}{B_{p,r}^{s,\psi}}.
\end{split}
\end{equation}
Thus, combining \eqref{Rj1 estimate}-\eqref{Rj8 estimate} and using that $\norm{c_j}{l^r} \leq 1$ yields the desired result.
\end{proof}

\subsection{Multiplier Theorems}
When estimating the Biot-Savart law in Triebel-Lizorkin spaces of generalized smoothness the following proposition will be useful. 

\begin{proposition}
\label{TL multiplier theorem} Let $s,a \in \R$,  $0<p,q<\infty$, and let $N=\left[\frac{d}{2} + \frac{d}{\min \{p,q\}}\right] +1$. Assume that $\sigma$ is a $C^N$ function on $\R^d\setminus \{0\}$ that satisfies
\begin{equation}
\label{multiplier assumption}
|\partial^{\gamma}\sigma(\xi)| \leq C_{\gamma}|\xi|^{-|\gamma|-a}
\end{equation}
for all $|\gamma|\leq N$. Then there exists a constant $C>0$ such that for all $f \in \mathcal{S}'(\R^d)$ we have 
$$
\norm{\mathcal{F}^{-1}(\sigma \hat{f})}{\dot{F}^{s,\psi}_{p,q}} \leq C \norm{f}{\dot{F}^{s-a,\psi}_{p,q}}.
$$
\end{proposition}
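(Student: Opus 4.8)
The plan is to follow the classical proof of Mihlin--Hörmander type multiplier theorems on Triebel--Lizorkin spaces (as in \cite{T97,DT18}), checking that the slowly varying weight $\psi(2^j)$ is carried along harmlessly. Write $T_\sigma f=\mathcal{F}^{-1}(\sigma\hat f)$. Since $T_\sigma$ is a Fourier multiplier it commutes with every $\dot\Delta_j$, and because $\hat\varphi_j$ is supported in an annulus of size $\sim 2^j$ we may replace $\sigma$ by a frequency--localized piece: if $\tilde{\hat\varphi}$ denotes a fattened bump equal to $1$ on $\supp\hat\varphi$, then with $\sigma_j(\xi)=\sigma(\xi)\tilde{\hat\varphi}(2^{-j}\xi)$ one has $\hat\varphi_j\sigma=\hat\varphi_j\sigma_j$, hence
\[
\dot\Delta_j T_\sigma f=\mathcal{F}^{-1}\sigma_j*\dot\Delta_j f .
\]
Rescaling, $\sigma_j(\xi)=2^{-ja}m_j(2^{-j}\xi)$ with $m_j(\eta)=2^{ja}\sigma(2^j\eta)\tilde{\hat\varphi}(\eta)$, and the hypothesis $|\partial^\gamma\sigma(\xi)|\le C_\gamma|\xi|^{-|\gamma|-a}$ together with the Leibniz rule shows that the $m_j$ are supported in one fixed annulus and satisfy $\sup_j\|\partial^\gamma m_j\|_{L^\infty}+\sup_j\|\partial^\gamma m_j\|_{L^2}\le C$ for all $|\gamma|\le N$. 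Thus $\mathcal{F}^{-1}\sigma_j(x)=2^{-ja}2^{jd}K_j(2^jx)$ with $K_j=\mathcal{F}^{-1}m_j$, and absorbing the constant factor $2^{js}\psi(2^j)2^{-ja}=2^{j(s-a)}\psi(2^j)$ into $g_j:=2^{j(s-a)}\psi(2^j)\dot\Delta_j f$ reduces the proposition to the pointwise identity $2^{js}\psi(2^j)\dot\Delta_j T_\sigma f=2^{jd}K_j(2^j\cdot)*g_j$ and hence to
\[
\Big\|\Big(\sum_j\big|2^{jd}K_j(2^j\cdot)*g_j\big|^q\Big)^{1/q}\Big\|_{L^p}\lesssim\Big\|\Big(\sum_j|g_j|^q\Big)^{1/q}\Big\|_{L^p}=\|f\|_{\dot F^{s-a,\psi}_{p,q}},
\]
where crucially each $g_j$ has Fourier support in an annulus $\sim 2^j$.

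Next I would control each dyadic convolution by a maximal function. Fix $r$ with $0<r<\min\{p,q\}$, to be chosen. Using the Peetre maximal function $(\mathcal{M}^*_{j,r}g)(x)=\sup_y|g(x-y)|(1+2^j|y|)^{-d/r}$ and the band-limited property of $g_j$, one has the standard inequality $\mathcal{M}^*_{j,r}g_j(x)\lesssim\big(\mathcal{M}(|g_j|^r)(x)\big)^{1/r}$, so that
\[
\big|2^{jd}K_j(2^j\cdot)*g_j(x)\big|\le\Big(\int_{\R^d}|K_j(z)|\,(1+|z|)^{d/r}\,dz\Big)\,\big(\mathcal{M}(|g_j|^r)(x)\big)^{1/r}.
\]
The integral is bounded uniformly in $j$: by Cauchy--Schwarz it is at most $\|(1+|\cdot|)^{M}K_j\|_{L^2}\,\|(1+|\cdot|)^{d/r-M}\|_{L^2}$, and one can choose an integer $M$ with $d/2+d/r<M\le N$; the second factor is then in $L^2$, while by Plancherel $\|(1+|\cdot|)^{M}K_j\|_{L^2}\lesssim\sum_{|\gamma|\le M}\|\partial^\gamma m_j\|_{L^2}\lesssim 1$. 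Such an $M$ exists once $r$ is taken close enough to $\min\{p,q\}$, precisely because $N=[d/2+d/\min\{p,q\}]+1>d/2+d/\min\{p,q\}$. Therefore $2^{js}\psi(2^j)\big|\dot\Delta_j T_\sigma f(x)\big|\lesssim\big(\mathcal{M}(|g_j|^r)(x)\big)^{1/r}$.

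Finally I would take $\ell^q$ in $j$ and $L^p$ in $x$ (both monotone operations, so no Minkowski inequality is needed and the argument works for $p,q<1$) and invoke the Fefferman--Stein vector-valued maximal inequality, Lemma \ref{vector maximal inequality}, with exponents $(p/r,q/r)\in(1,\infty)\times(1,\infty)$, admissible exactly because $r<\min\{p,q\}$:
\[
\Big\|\Big(\sum_j\big(\mathcal{M}(|g_j|^r)\big)^{q/r}\Big)^{1/q}\Big\|_{L^p}=\Big\|\Big(\sum_j\big(\mathcal{M}(|g_j|^r)\big)^{q/r}\Big)^{r/q}\Big\|_{L^{p/r}}^{1/r}\lesssim\Big\|\Big(\sum_j|g_j|^q\Big)^{r/q}\Big\|_{L^{p/r}}^{1/r}=\Big\|\Big(\sum_j|g_j|^q\Big)^{1/q}\Big\|_{L^p},
\]
which equals $\|f\|_{\dot F^{s-a,\psi}_{p,q}}$ and closes the estimate. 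The two places demanding care are the uniform bound on $\int|K_j(z)|(1+|z|)^{d/r}\,dz$ --- this is exactly where the integer $N$ enters, and where the gain of $d/2$ from combining Cauchy--Schwarz with Plancherel (rather than brute-force pointwise decay of $K_j$) is what makes the hypothesis on $N$ sharp --- and confirming that $\psi(2^j)$ contributes only through $g_j$; the latter is immediate, since $\psi(2^j)$ is a scalar that passes through the convolution. I expect the Peetre maximal inequality and this uniform kernel bound to be the main technical content, the rest being bookkeeping.
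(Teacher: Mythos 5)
Your proposal follows essentially the same route as the paper's proof: frequency-localize $\sigma$ to $\sigma_j$, estimate $\dot\Delta_j T_\sigma f$ via the Peetre maximal function and the pointwise maximal bound for band-limited functions (the paper's Lemma \ref{technical tl lemma}), control the kernel integral by Cauchy--Schwarz followed by Plancherel so that the $L^2$ norm of the weighted kernel is bounded by $\sum_{|\gamma|\le N}\|\partial^\gamma m_j\|_{L^2}\lesssim 1$, and finish with the Fefferman--Stein vector-valued maximal inequality. Your explicit rescaling to $m_j$ and $K_j$ is a cosmetic reorganization of the paper's computation with $\widecheck{\sigma_j}(2^j\cdot)$.

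One point where your argument is actually more careful than the paper's: you take $r$ strictly less than $\min\{p,q\}$, whereas the paper sets $r=\min\{p,q\}$. The Fefferman--Stein inequality (Lemma \ref{vector maximal inequality}) is applied with exponents $(p/r,q/r)$, and its hypotheses require $p/r>1$ and $q/r>1$; with $r=\min\{p,q\}$ one of these ratios equals $1$ and the lemma as stated does not apply. Your observation that $N=[\,d/2+d/\min\{p,q\}\,]+1$ strictly exceeds $d/2+d/\min\{p,q\}$, so that one may take $r$ slightly below $\min\{p,q\}$ and still find an integer $M\le N$ with $d/2+d/r<M$, is exactly the right fix and is in fact what the integer part plus one in the definition of $N$ is engineered to allow. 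Your remark that the final step uses only monotonicity of $\ell^q$ and $L^p$ norms, not Minkowski, so the argument extends to $p,q<1$, is also correct and worth keeping.
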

Before proving Proposition \ref{TL multiplier theorem} we need the following lemma.
\begin{lemma}\cite{G14}
\label{technical tl lemma} Let $0<c_0<\infty$ and $0<r<\infty$. Then there exists constants $C_1$ and $C_2$ (which depends on $d,c_0$ and $r$) such that for all $t>0$ and all $u \in C^1(\R^d)$ whose Fourier transform is supported in the ball $|\xi| \leq c_0t$ and that satisfy $|u(z)| \leq B(1+|z|)^{\frac{d}{r}}$ for some $B>0$ we have the estimate
$$
\sup_{z \in \R^d} \frac{1}{t} \frac{|\nabla u(x-z)|}{(1+t|z|)^{\frac{d}{r}}} \leq C_1 \sup_{z \in \R^d} \frac{|u(x-z)|}{(1+t|z|)^{\frac{d}{r}}} \leq C_2 \mathcal{M}(|u|^r)(x)^{\frac{1}{r}},
$$
where $\mathcal{M}$ denotes the Hardy-Littlewood maximal operator.
\end{lemma}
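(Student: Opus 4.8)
\textbf{Proof strategy for Lemma \ref{technical tl lemma}.}
The plan is to prove the two inequalities separately. The left-hand inequality bounds $\frac{1}{t}|\nabla u(x-z)|$ by $|u(x-z)|$ up to the same Peetre-type weight; this is a Bernstein-type gain on the derivative, made uniform in $z$ by exploiting the band-limited structure of $u$. The right-hand inequality is the genuine Peetre maximal function estimate: controlling $\sup_z |u(x-z)|/(1+t|z|)^{d/r}$ by $\mathcal{M}(|u|^r)(x)^{1/r}$.

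\textbf{The left inequality.} First I would reduce to the case $t=1$ by rescaling: if $u_t(y) := u(y/t)$ then $\widehat{u_t}$ is supported in $|\xi| \le c_0$, and both sides of the left inequality transform homogeneously, so the general case follows from $t=1$ with a constant depending only on $d, c_0, r$. For $t=1$, fix $\Phi \in C_0^\infty(\R^d)$ with $\widehat\Phi \equiv 1$ on $|\xi| \le c_0$; since $\widehat u$ is supported in that ball, $u = u \ast \Phi$ and hence $\nabla u = u \ast \nabla\Phi$. Writing this out,
\begin{equation*}
\nabla u(x-z) = \int_{\R^d} u(x-z-w)\,\nabla\Phi(w)\,dw,
\end{equation*}
so
\begin{align*}
\frac{|\nabla u(x-z)|}{(1+|z|)^{d/r}} &\le \int_{\R^d} \frac{|u(x-z-w)|}{(1+|z+w|)^{d/r}} \cdot \frac{(1+|z+w|)^{d/r}}{(1+|z|)^{d/r}} |\nabla\Phi(w)|\,dw\\
&\le \left(\sup_{y} \frac{|u(x-y)|}{(1+|y|)^{d/r}}\right) \int_{\R^d} (1+|w|)^{d/r} |\nabla\Phi(w)|\,dw,
\end{align*}
where I used the elementary inequality $1+|z+w| \le (1+|z|)(1+|w|)$. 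The last integral is a finite constant $C_1$ depending only on $d, r$ (and the fixed $\Phi$, hence on $c_0$), since $\nabla\Phi$ is Schwartz. This gives the left inequality.

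\textbf{The right inequality.} Again reduce to $t=1$ by the same scaling. Write $u = u \ast \Phi$ as above. The standard trick is to split the convolution into dyadic annuli $|w| \sim 2^k$. For each $y \in \R^d$,
\begin{equation*}
|u(x-y)| = \left| \int u(x-y-w)\Phi(w)\,dw \right| \le \sum_{k \ge 0} \int_{|w| \sim 2^k} |u(x-y-w)||\Phi(w)|\,dw.
\end{equation*}
On the annulus $|w| \sim 2^k$ one has $|\Phi(w)| \lesssim_M 2^{-kM}$ for any $M$, and by Hölder (using $r$ and accepting the loss when $r<1$ via the subadditivity $(\sum a_i)^{1/r}$-type manipulation, or more cleanly treating $|u|^r$ directly), the average of $|u(x-y-w)|$ over the ball of radius $\sim |y|+2^k$ centered at $x$ is controlled by $\big(\mathcal{M}(|u|^r)(x)\big)^{1/r}$ times $(|y|+2^k)^{d/r}$ up to constants. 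Summing the geometric-type series in $k$ (choosing $M > d/r$) and dividing by $(1+|y|)^{d/r}$, the factor $(|y|+2^k)^{d/r}/(1+|y|)^{d/r} \lesssim (1+2^k)^{d/r}$ is absorbed by $2^{-kM}$, yielding
\begin{equation*}
\frac{|u(x-y)|}{(1+|y|)^{d/r}} \lesssim \big(\mathcal{M}(|u|^r)(x)\big)^{1/r}
\end{equation*}
uniformly in $y$, which is the claim after taking the supremum. The pointwise control $|u(z)| \le B(1+|z|)^{d/r}$ is used only to guarantee all these integrals converge absolutely so the manipulations are justified; it does not enter the final constant.

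\textbf{Main obstacle.} The delicate point is the case $0 < r < 1$, where Hölder's inequality is not directly available and one must instead work with $|u|^r$ throughout and use that $|u(x-y-w)|^r \le |u(x-y-w)|^r$ trivially but combine annuli via the inequality $(\sum_k a_k)^r \le \sum_k a_k^r$ — this forces one to carry the $r$-th powers inside from the start and be careful that the maximal function is of $|u|^r$, not $|u|$. Getting the bookkeeping right there, and verifying that the constants depend only on $d, c_0, r$ (and not on $u$ or $t$ or $B$), is where the real care is needed; everything else is the routine band-limited/Schwartz-tail estimate. I would cite \cite{G14} for the precise constants but reproduce the argument as above for completeness.
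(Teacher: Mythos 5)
The paper gives no proof of this lemma at all --- it is quoted verbatim from \cite{G14} (it is the standard Peetre maximal function estimate for band-limited functions), so there is no in-paper argument to compare against; I evaluate your proposal on its own merits.

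Your proof of the \emph{first} inequality is correct: the reduction to $t=1$, the reproducing identity $u = u\ast\Phi$, and the weight inequality $1+|z+w|\le(1+|z|)(1+|w|)$ are exactly the standard argument. One slip: you cannot take $\Phi\in C_0^\infty$ with $\widehat\Phi\equiv 1$ on a ball (a compactly supported $\Phi$ has analytic $\widehat\Phi$); you want $\widehat\Phi\in C_0^\infty$ with $\widehat\Phi\equiv1$ on $|\xi|\le c_0$, so that $\Phi$ is Schwartz. Your later invocation of the Schwartz decay of $\nabla\Phi$ shows this is what you meant.

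The \emph{second} inequality is where there is a genuine gap. Your dyadic-annulus argument is fine for $r\ge 1$: H\"older on each annulus gives $\int_{|w|\sim 2^k}|u(x-y-w)||\Phi(w)|\,dw\lesssim 2^{-kM}2^{kd/r'}(|y|+2^k)^{d/r}\bigl(\mathcal{M}(|u|^r)(x)\bigr)^{1/r}$, and after dividing by $(1+|y|)^{d/r}$ the series converges for $M>d$ (not just $M>d/r$, but this is harmless since $M$ is free). For $0<r<1$, however, the approach fails at its core, and the fix you sketch --- ``carry the $r$-th powers inside'' and use $(\sum_k a_k)^r\le\sum_k a_k^r$ --- does not repair it. The obstruction is not bookkeeping: for $r<1$ one simply cannot dominate an $L^1$ average of $|u|$ over a ball by the corresponding $L^r$ average (H\"older goes the wrong way, and a tall thin spike defeats any such pointwise convolution bound for general $u$). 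The inequality for $r<1$ is a reverse-H\"older phenomenon that uses band-limitedness quantitatively, and the standard proof is an absorption argument that your scheme omits entirely: apply the mean value theorem on a ball of radius $\delta/t$ around $x-z$, raise to the power $r$, average in $y$ to produce $\mathcal{M}(|u|^r)(x)$ plus a term $\delta^r\sup_w\bigl(t^{-1}|\nabla u|/(1+t|\cdot|)^{d/r}\bigr)^r$, control the gradient term by the \emph{first} inequality, and absorb it into the left-hand side by choosing $\delta$ small. This absorption is legitimate only because the Peetre supremum is finite a priori, which is precisely what the hypothesis $|u(z)|\le B(1+|z|)^{d/r}$ guarantees --- so your remark that this hypothesis serves only to justify absolute convergence understates its role. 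Since the lemma is stated for all $0<r<\infty$ (and Proposition \ref{TL multiplier theorem} is stated for $0<p,q<\infty$, hence $r=\min\{p,q\}$ may be below $1$), the $r<1$ case cannot be discarded.
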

\begin{proof}[Proof of Proposition \ref{TL multiplier theorem}] We begin by writing $\sigma_j(\xi)=\widetilde{\varphi}_j\sigma(\xi)$. Then observe that 
$$
\Delta_j(\sigma(D)f)=\Delta_j(\sigma_j(D)f)=\sigma_j(D)\Delta_jf.
$$
Let $r=\min\{p,q\}$ and $N=\frac{d}{2}+\frac{d}{r}+\delta$ for some $\delta>0$. We then estimate 
\begin{align*}
&|\sigma_j(D)\Delta_jf(x)| = \left| \int_{\R^d} \widecheck{\sigma_j}(y)\Delta_jf(x-y)\frac{(1+2^j|y|)^{\frac{d}{r}}}{(1+2^j|y|)^{\frac{d}{r}}}dy\right|\\
&\leq \sup_{y \in \R^d} \frac{|(\Delta_jf)(x-y)|}{(1+2^j|y|)^{\frac{d}{r}}} \int_{\R^d} |\widecheck{\sigma_j}(y)| (1+2^j|y|)^{\frac{d}{r}} \left(\frac{1+2^j|y|}{1+2^j|y|}\right)^{\frac{d}{2}+\delta}dy\\
&\lesssim \left(\mathcal{M}(|\Delta_jf|^r)(x)\right)^{\frac{1}{r}} \left(\int_{\R^d} |\widecheck{\sigma_j}(y)|^2(1+2^j|y|)^{\frac{2d}{r}+d+2\delta}dy \times\int_{\R^d}(1+2^j|y|)^{-d-2\delta}dy\right)^{\frac{1}{2}}
\end{align*}
where in the final inequality we used Lemma \ref{technical tl lemma} and Cauchy Schwarz inequality. Furthermore
$$
\int_{\R^d}(1+2^j|y|)^{-d-2\delta}dy \lesssim 2^{-jd}.
$$
Thus, 
\begin{align*}
&|\sigma_j(D)\Delta_jf(x)| \lesssim \left(\mathcal{M}(|\Delta_jf|^r)(x)\right)^{\frac{1}{r}} \left(\int_{\R^d} 2^{-jd}|\widecheck{\sigma_j}(y)|^2(1+2^j|y|)^{\frac{2d}{r}+d+2\delta}dy \right)^{\frac{1}{2}}\\
&\leq
 \left(\mathcal{M}(|\Delta_jf|^r)(x)\right)^{\frac{1}{r}} \left(\int_{\R^d} |\widecheck{\sigma_j(2^j(\cdot))}(y)|^2(1+|y|)^{2N}dy \right)^{\frac{1}{2}}\\
 &\lesssim
  \left(\mathcal{M}(|\Delta_jf|^r)(x)\right)^{\frac{1}{r}} \left(\int_{\R^d} |\sum_{|\gamma| \leq N} y^{|\gamma|}\widecheck{\sigma_j(2^j(\cdot))}(y)|^2dy \right)^{\frac{1}{2}}\\
  &=  \left(\mathcal{M}(|\Delta_jf|^r)(x)\right)^{\frac{1}{r}} \left(\int_{\R^d} |\sum_{|\gamma| \leq N} \partial^{\gamma}\sigma_j(2^j\xi)|^2d\xi \right)^{\frac{1}{2}}\\
  &\lesssim 2^{-ja}\left(\mathcal{M}(|\Delta_jf|^r)(x)\right)^{\frac{1}{r}}.
\end{align*}
where in the last inequality we used that $\varphi$ is Schwartz function, and that $\sigma_j$ is compactly supported. Summarizing, we have shown that 
\begin{equation}
\label{tl multiplier proof pre lq}
2^{js}\psi(2^j)|\Delta_j(\sigma(D)f)| \lesssim 2^{j(s-a)}\psi(2^j)\left(\mathcal{M}(|\Delta_jf|^r)(x)\right)^{\frac{1}{r}}
\end{equation}
Taking the $l^q(\mathbb{Z})$ norm in $j$ of \eqref{tl multiplier proof pre lq} and then applying Lemma \ref{vector maximal inequality} yields
\begin{align*}
\norm{\left(\sum_{j \in \mathbb{Z}} \left(2^{js}\psi(2^j)|\Delta_j(\sigma(D)f)|\right)^q\right)^{\frac{1}{q}}}{L^p} &\lesssim
\norm{\left( \sum_{j \in \mathbb{Z}} \left(\left(\mathcal{M}(2^{j(s-a)}\psi(2^j) |\Delta_jf|)^r(x)\right)^{\frac{1}{r}}\right)^q \right)^{\frac{1}{q}}}{L^p}\\
&=
\norm{\norm{\mathcal{M}(|2^{j(s-a)}\psi(2^j) \Delta_jf|^r)}{l^{\frac{q}{r}}(\mathbb{Z})}}{L^{\frac{p}{r}}(\R^d)}^{\frac{1}{r}}\\
&\lesssim 
\norm{\norm{(|2^{j(s-a)}\psi(2^j) \Delta_jf|^r)}{l^{\frac{q}{r}}(\mathbb{Z})}}{L^{\frac{p}{r}}(\R^d)}^{\frac{1}{r}}\\
&\leq
\norm{f}{\dot{F}^{s-a,\psi}_{p,q}},
\end{align*}
where in the last inequality we used Minkowski's integral inequality.
\end{proof}

\bibliographystyle{alpha}
\bibliography{wellposedness}

\end{document}